\newtheorem{thm}{Theorem}[section]
\newtheorem{prop}[thm]{Proposition}
\newtheorem{lem}[thm]{Lemma}
\theoremstyle{remark}
\newtheorem{rem}[thm]{Remark}
\renewcommand{\le}{\leqslant}
\def\les{\lesssim}
\renewcommand{\ge}{\geqslant}
\renewcommand{\leq}{\leqslant}
\renewcommand{\geq}{\geqslant}
\renewcommand{\subset}{\subseteq}
\newcommand{\E}{\mathbb{E}}
\newcommand{\B}{\mathbb{B}}
\newcommand{\cE}{\mathcal{E}}
\newcommand{\cG}{\mathcal{G}}
\newcommand{\U}{\mathcal{U}}
\renewcommand{\L}{\mathscr{L}}
\newcommand{\K}{\mathscr{K}}
\newcommand{\Ll}{\left}
\newcommand{\Rr}{\right}
\newcommand{\R}{\mathbb{R}}
\newcommand{\C}{\mathcal{C}}
\newcommand{\Z}{\mathbb{Z}}
\renewcommand{\P}{\mathbb{P}}
\newcommand{\td}{\tilde}
\newcommand{\eps}{\varepsilon}
\def\d{{\mathrm{d}}}
\newcommand{\var}{\mathbb{V}\mathrm{ar}}
\newcommand{\h}{\mathsf{h}}
\newcommand{\loc}{\mathrm{loc}}
\newcommand{\Qh}{\mathsf{Q}}
\newcommand{\cu}{{\scaleobj{1.2}{\square}}}
\newcommand{\fint}{\strokedint}
\numberwithin{equation}{section}
\title[Fluctuations in stochastic homogenization]{Scaling limit of fluctuations in stochastic homogenization}
\author{Yu Gu, Jean-Christophe Mourrat}
\address[Yu Gu]{Department of Mathematics, Building 380, Stanford University, Stanford, CA, 94305, USA}
\address[Jean-Christophe Mourrat]{ENS Lyon, CNRS, 46 allée d'Italie, 69007 Lyon, France}
\begin{document}
\begin{abstract}

We investigate the global fluctuations of solutions to elliptic equations with random coefficients in the discrete setting. In dimension $d\geq 3$ and for i.i.d.\ coefficients, we show that after a suitable scaling, these fluctuations converge to a Gaussian field that locally resembles a (generalized) Gaussian free field. The paper begins with a heuristic derivation of the result, which can be read independently and was obtained jointly with Scott Armstrong.

\bigskip

\noindent \textsc{MSC 2010:} 35B27, 35J15, 35R60, 60G60.

\medskip

\noindent \textsc{Keywords:} quantitative homogenization, central limit theorem, Helffer-Sj\"ostrand representation, Stein's method.

\end{abstract}
\maketitle
%
%
%
%
%
%
%
%

\section{Heuristics}
\label{s:heu}

The goal of this paper is to give a precise description of the fluctuations of solutions of elliptic equations with random coefficients, in the large scale limit. Before stating our precise assumptions and results, we present powerful heuristics that enable to guess the results and give a better comprehension of the phenomena\footnote{The recording of a talk presenting this is also available at {\url{http://goo.gl/5bgfpR}}.}. These heuristics were obtained in collaboration with Scott Armstrong, whom we warmly thank for letting us include this material here.

\subsection{The (generalized) Gaussian free field}
We start by introducing white noise and (generalized) Gaussian free fields. These will be the fundamental objects used in the heuristic derivation of the large-scale behavior of the first-order correction to stochastic homogenization below.

The random distribution $w$ is a (one-dimensional) white noise with variance $\sigma^2$ if for every $\phi \in C^\infty_c(\R)$, $w(\phi)$ is a centered Gaussian random variable with variance $\sigma^2 \int \phi^2$. (We can in fact define $w(\phi)$ for any $\phi \in L^2(\R^d)$ by density.)
Informally,
$$
\E[w(x) \, w(y)] 
= \sigma^2 \, \delta(x-y),
$$
where $\delta$ is a Dirac mass at the origin.
More generally, the random, $d$-dimensional distribution $W = (W_1,\ldots, W_d)$ is a white noise with covariance matrix $\Qh$ if for every $\phi = (\phi_1, \ldots, \phi_d) \in C^\infty_c(\R^d)$, 
$$
W(\phi) := W_1(\phi_1) + \cdots + W_d(\phi_d)
$$
is a centered Gaussian random variable with variance $\int \phi \cdot \Qh \phi$. Informally, 
$$
\E[W_i(x) \, W_j(y) ] = \Qh_{ij} \, \delta(x-y).
$$

In dimension $d=1$, one way to define a Brownian motion $B$ is to ask it to satisfy
\begin{equation}
\label{e.brownian}
B' = w,
\end{equation}
where $w$ is a one-dimensional white noise, and $B'$ denotes the derivative of $B$. The Gaussian free field is a high-dimensional version of Brownian motion. 
By analogy with \eqref{e.brownian}, we may want to ask a Gaussian free field $\Phi$ to satisfy $\nabla \Phi = W$, where $W$ is a $d$-dimensional white noise. However, this does not make sense because $W$ is not a gradient field; so we will instead define $\nabla \Phi$ as the $L^2$ projection of $W$ onto the space of gradient fields. In view of the Helmholtz-Hodge decomposition of any vector field into a potential part and a solenoidal part: 
\begin{equation}
\label{e.helmholtz}
L^2 = \{\nabla u \} \stackrel{\perp}{\oplus} \{\mathbf g \, : \,  \nabla \cdot \mathbf g = 0\},
\end{equation}
this leads us to the equation
\begin{equation}
\label{e.GFF0}
-\Delta \Phi = \nabla \cdot W.
\end{equation}
A minor variant of the construction above is to consider the Helmholtz-Hodge projection with respect to a uniform background metric given by a symmetric matrix $a_\h$. In this case, equation \eqref{e.GFF0} becomes
\begin{equation}
\label{e.GFF}
-\nabla \cdot a_\h\nabla \Phi = \nabla \cdot W.
\end{equation}
Since the equation is linear, one can give a mathematically precise sense of $\nabla \Phi$ using classical arguments of the theory of distributions. (In dimension one, this also gives a sensible way to define a ``Brownian motion'' on the circle, i.e.\ a Brownian bridge, as opposed to \eqref{e.brownian} which does not satisfy the compatibility condition $\int w = 0$.) We take \eqref{e.GFF} as the definition of the Gaussian free field associated with $a_\h$ and $\Qh$. 

On the full space $\R^d$ with $d \ge 3$, one can also define $\Phi$ itself (and not only $\nabla \Phi$), e.g.\ as the limit as $\mu$ tends to $0$ of $\Phi_\mu$ such that
$$
(\mu - \nabla \cdot a_\h \nabla ) \Phi_\mu = \nabla \cdot W.
$$
In this case, one can express the two-point correlation function of $\Phi$ in terms of the Green function $\cG_\h$ of $- \nabla \cdot a_\h \nabla$ and the covariance matrix $\Qh$ of $W$:
\begin{equation}
\label{e.two-point}
\E[\Phi(0) \, \Phi(x)] = \int \nabla \cG_\h(y) \cdot \Qh \nabla \cG_\h(y-x) \, \d y.
\end{equation}
If $\Qh$ happens to be a multiple of $a_\h$, then an integration by parts enables to replace the integral above by a constant times $\cG_\h(x)$, and we recover the more common definition of the Gaussian free field as a Gaussian field whose covariance kernel is a Green function. However, for generic $a_\h$ and $\Qh$, the correlation in \eqref{e.two-point} cannot be expressed as a Green function. In other words, our definition of (generalized) Gaussian free field is wider than the standard one.

It is important for the remainder of the discussion to be familiar with the scaling and regularity properties of white noise and Gaussian free fields. As for white noise, $W(r \ \cdot \,)$ has the same law as $r^{-\frac d 2} W$. In particular, thinking of $r \to 0$, we see that zooming in on $W$ at scale $r$ produces a blow-up of $r^{-\frac d 2}$ (and conversely if we think of $r \to +\infty$). This is an indication of the fact that $W$ has (negative) H\"older regularity $\alpha$ for every $\alpha < -\frac d 2$, and no more. In view of \eqref{e.GFF}, the Gaussian free field $\Phi$ is such that $\Phi(r \ \cdot \,)$ has the same law as $r^{-\frac d 2 + 1} \, \Phi$, and has H\"older regularity $\alpha$ for every $\alpha < -\frac d 2 + 1$. In dimension $d = 1$, we recover the fact that Brownian motion trajectories have H\"older regularity $\alpha$ for every $\alpha < \frac 1 2$. In higher dimensions, the Gaussian free field fails to have regularity $0$; it only makes sense as a distribution, but not as a function.

\subsection{Homogenization and random fluctuations}

We now turn to the homogenization of the operator $-\nabla \cdot {a} \nabla$, where ${a}: \R^d \to \R^{d \times d}$ is a random field of symmetric matrices. We assume that the law of ${a}$ is stationary and posesses very strong mixing properties (e.g.\ finite range of dependence), and that $I_d \le {a} \le C I_d$ for some constant $C < \infty$. In this case, it is well-known that the large scale properties of the operator $-\nabla \cdot {a} \nabla$ resemble those of the homogeneous operator $-\nabla \cdot a_\h \nabla$, for some constant, deterministic matrix $a_\h$. Our goal is to describe the next-order correction. For $p \in \R^d$ and $\cu_r := (-\frac r 2, \frac r 2)^d \subset \R^d$, we introduce
\begin{equation}
\label{e.defnu}
\nu(\cu_r,p) := \inf_{v \in H^1_0(\cu_r)} \fint_{\cu_r} \frac 1 2 (p+ \nabla v) \cdot {a} (p+\nabla v)
\end{equation}
(where $\fint_{\cu_r} = |\cu_r|^{-1} \int_{\cu_r}$).
This quantity is subadditive: if $\cu_r$ is partitioned into subcubes $(y+\cu_{s})_y$, then $\nu(\cu_r,p)$ is smaller than the average over $y$ of $\nu(y+\cu_s,p)$. Indeed we can glue the minimizers of $\nu(y+\cu_s,p)$ and create a minimizer candidate for $\nu(\cu_r,p)$. Roughly speaking, it was shown in \cite{dal} that homogenization follows from the fact that
\begin{equation}
\label{e.dalma}
\nu(\cu_r,p) \xrightarrow[r \to \infty]{} \frac 1 2 p \cdot a_\h p
\end{equation}
(which itself is a consequence of the subadditve ergodic theorem). 
It is natural to expect the next-order correction to homogenization to follow from the understanding of the next-order correction to \eqref{e.dalma}. However, the next-order correction to \eqref{e.dalma} is driven by a boundary layer, which is of order $r^{-1}$ (see \cite{armstrong3}), and is not relevant to the understanding of the interior behavior of solutions. We thus assume that $\nu$ has been suitably modified into $\td \nu$ in order to get rid of the boundary layer. After performing this modification, we expect $\td \nu(\, \cdot \,, p)$ to be ``almost additive'' \cite{armstrong3}, and therefore that
$$
|\cu_r|^{1/2} \Ll( \td \nu(\cu_r,p) - \frac 1 2 p \cdot a_\h p \Rr) 
$$
converges to a Gaussian random variable as $r$ tends to infinity (as a consequence of the strong mixing assumption on the coefficients). Closely related statements were proved in \cite{nolen2011normal,biskup2014central,rossignol2012noise, nolen2014normal,GN2014CLT}. We want to encode this information in a way that is consistent with respect to changing the vector $p$, the scale $r$ and translations of the cube. For this purpose, we let $W$ be a matrix-valued white noise field such that as $r$ becomes large,
\begin{equation}
\label{e.key}
\td \nu(x + \cu_r,p) \simeq \frac 1 2 p \cdot \Ll(a_\h + W_r(x) \Rr) p ,
\end{equation}
where $W_r$ is the spatial average of $W$ on scale $r$:
\begin{equation}
\label{e.convol}
W_r (x) := \fint_{x + \cu_r} W.
\end{equation}
(We may also think of $W_r$ as the convolution of $W$ with a rescaled bump function: 
$W_r := W \star \chi^{(r)}$, with $\chi \in C^\infty_c(\R^d,\R_+)$ such that $\int \chi = 1$ and $\chi^{(r)} := r^{-d} \chi(\cdot/r)$.)
This encodes in particular the fact that $\nu(x+\cu_r,p)$ and $\nu(y+\cu_r,q)$ are asymptotically independent if $x+\cu_r$ and $y+\cu_r$ are disjoint. Recall that each coordinate of $W_r(x)$ is of order $|\cu_r|^{-1/2} = r^{-d/2} \ll 1$. We interpret \eqref{e.key} as indicating that if a function locally minimizes the energy over $x+\cu_r$ and has average gradient $p$, then its energy over $x + \cu_r$ is approximately $\frac 1  2 p \cdot (a_\h + W_r(x)) p$. 

The corrector for $-\nabla\cdot {a}\nabla$ in the direction $p$ is usually defined as the sublinear function solving
\[
-\nabla \cdot {a}(p+\nabla \phi)=0
\]
in the whole space. We think of $\phi$ as the minimizer in the definition of $\nu(\cu_R,p)$, for $R$ extremely large (in fact, infinite), and focus our attention on understanding the spatial average $\phi_r$ of $\phi$ on scale $r$, $ 1 \ll r \ll R$. The discussion above suggests that $\phi_r$ minimizes the \emph{coarsened} energy function
$$
v \mapsto \fint_{\cu_R} \frac 1 2 (p+\nabla v) \cdot (a_\h + W_r) (p+\nabla v),
$$
whose Euler-Lagrange equation is
$$
-\nabla \cdot (a_\h + W_r) (p+\nabla \phi_r) = 0.
$$
Rearranging, we obtain
$$
-\nabla \cdot (a_\h + W_r)\nabla \phi_r = \nabla  \cdot (W_r p).
$$
Since $W_r \ll 1$, we have $\nabla \phi_r \ll 1$. Therefore, the term $W_r$ on the left-hand side can be neglected, and we obtain the Gaussian-free-field equation
\begin{equation}
\label{e.corr-eq}
-\nabla \cdot a_\h \nabla \phi_r = \nabla \cdot (W_r p).
\end{equation}
We summarize this heuristic computation as follows.
\begin{itemize}
\item We let $W$ be the matrix-valued white noise field whose covariance is related to the fluctuations of the energy via \eqref{e.key};
\item we let $\Phi$ be the random distribution defined by 
\begin{equation}
\label{e.def-Phi}
-\nabla \cdot a_\h \nabla \Phi = \nabla \cdot (W p)\ ;
\end{equation}
\item then the large-scale spatial averages of the corrector $\phi$ have about the same law as those of $\Phi$. In other words (and by the scale invariance of $\Phi$), the random distribution $r^{\frac d 2 - 1} \phi(r \ \cdot \,)$ converges in law to $\Phi$ in a suitably weak topology, as $r$ tends to infinity.
\end{itemize}
A similar analysis can be performed for, say, solutions of equations of the form
\begin{equation}
\label{e.model-eq}
-\nabla \cdot {a} \nabla u = f \qquad \mbox{in $\R^d$, $d \ge 3$},
\end{equation}
where $f \in C^\infty_c(\R^d)$ varies on scale $\eps^{-1} \gg 1$. (The function $f$ should be of order $\eps^2$ in order for $u$ to be of order $1$.)  We consider the spatial average $u_r$ of $u$ over scale $r$, $1 \ll r \ll \eps^{-1}$. By the same reasoning as above, we expect $u_r$ to satisfy the coarsened equation
$$
-\nabla \cdot (a_\h+ W_r) \nabla u_r = f.
$$
We write $u_r = u_\h + \td u_r$, where $u_\h$ solves
$$
-\nabla \cdot a_\h \nabla u_\h = f,
$$
so that 
$$
-\nabla \cdot (a_\h + W_r) \nabla \td u_r = \nabla \cdot (W_r \nabla u_\h).
$$
As before, we expect the term $W_r$ on the left-hand side to be negligible, so we obtain
\begin{equation}
\label{e.fluct-eq}
-\nabla \cdot a_\h \nabla \td u_r = \nabla \cdot (W_r \nabla u_\h).
\end{equation}
We stress that if instead we use the formal two-scale expansion $u \simeq u_\h + \sum_i \phi^{(i)} \partial_i u_\h$ and the large-scale description of the corrector in \eqref{e.corr-eq}, we are led to a different and \emph{incorrect} result.

We can again summarize our conclusions as follows.
\begin{enumerate}
\item We define the matrix-valued white noise field as above, according to \eqref{e.key};
\item we let $\mathscr{U}$ be the random distribution defined by the equation
\begin{equation}
\label{e.defU}
-\nabla \cdot a_\h \nabla \mathscr{U} = \nabla \cdot (W \nabla u_\h) \ ;
\end{equation}
\item Then the large-scale averages of $u-u_\h$ have about the same law as those of~$\mathscr{U}$.
\end{enumerate}
The random distribution $\mathscr{U}$ is not a (generalized) Gaussian free field per se because the term $\nabla u_\h$ appearing in its defining equation varies over large scales. However, it will have similar small-scale features. If we normalize $f$ so that $u_\h$ is of order~$1$, then $\nabla u_\h$ is of order $\eps$, over a length scale of order $\eps^{-1}$. Hence, we can think of $\eps^{-1} \mathscr U$ as locally like a (generalized) Gaussian free field (of order $1$), and being close to $0$ outside of a domain of diameter of order $\eps^{-1}$. Incidentally, if we make the (unjustified) ansatz that $u-u_\h$ is approximately a regularization of $\mathscr{U}$ on the unit scale (that is, if we pretend that the conclusion (3) above actually holds on the unit scale), then we recover the (correct) error estimate
$$
\Ll(\eps^d \int |u-u_\h|^2 \Rr)^{1/2}\lesssim
\left|
\begin{array}{ll}
\eps \log^{1/2}(\eps^{-1}) & \text{if } d = 2, \\
\eps  & \text{if } d \ge 3.
\end{array}
\right.
$$

Mathematically speaking, a version of the statement that $r^{\frac d 2 - 1} \phi(r \ \cdot\,)$ converges to a (generalized) Gaussian free field was proved in \cite{mourrat2014correlation, MN}. The aim of the present work is to prove a version of the conclusion (3) above. 

\bigskip

Before turning to this, we want to emphasize why we believe these results to be of practical interest. 
Homogenization itself is interesting since it enables to describe approximations of solutions of equations with rapidly oscillating coefficients by solutions of simple equations described by a few effective parameters. What the above arguments show is that the same is true of the next-order correction: in order to describe it, it suffices to know the few parameters describing the covariance of the white noise~$W$. The white noise $W$ takes values in symmetric matrices, so its covariance matrix is fully described by $N(N+1)/2$ parameters, where $N = d(d+1)/2$ ($6$ parameters in dimension $2$, $21$ parameters in dimension $3$). Naturally, fewer parameters are necessary for problems with additional symmetries. For instance, the noise is described by only one parameter in any dimension if we assume rotational invariance.

\section{Introduction}

\subsection{Main result}

We focus on dimension $3$ and higher and on a discrete setting. Our main assumption is that the random coefficients are i.i.d.\ and bounded away from $0$ and infinity. Our goal is to justify the points (1-3) listed above in this context.

\medskip 

In order to state our assumptions and results more precisely, we introduce some notations.
We work on the graph $(\Z^d,\mathbb{B})$ with $d\geq 3$, where $\mathbb{B}$ is the set of nearest-neighbor edges. Let $e_1,\ldots,e_d$ be the canonical basis in $\Z^d$. For every edge $e\in \mathbb{B}$, there exists a unique pair $(\underline{e},i)\in \Z^d\times \{1,\ldots,d\}$ such that $e$ links $\underline{e}$ to $\underline{e}+e_i$. We will write $\bar{e}=\underline{e}+e_i$ and $e=(\underline{e},\bar{e})$. 

We give ourselves a family of i.i.d.\ random variables indexed by the edges of the graph. For convenience, we will assume that these random variables can be built from a family of i.i.d.\ standard Gaussians. More precisely, we let $\zeta = (\zeta_e)_{e \in \B} \in \Omega := \R^\B$ be i.i.d.\ standard Gaussian random variables, and $\eta$ be a smooth function that is bounded away from zero and infinity with bounded first and second derivatives. The \emph{conductance} associated with the edge $e$ is then defined to be $\eta(\zeta_e)$. The space $\Omega$ is equipped with the product Borel $\sigma$-algebra, and we denote the law of $\zeta$ by $\P$, with associated expectation $\E$. 

Let $\tilde{a}:\Z^d\times \Omega\to \R^{d\times d}$ be the matrix-valued function such that $\tilde{a}(x,\zeta)=\mathrm{diag}(\tilde{a}_1(x,\zeta),\ldots,\tilde{a}_d(x,\zeta))$ with $\tilde{a}_i(x,\zeta)=\eta(\zeta_{(x,x+e_i)})$. In what follows, we will most of the time keep the dependence on $\zeta$ implicit in the notation.

For any $f:\Z^d\to \R$ we define the discrete gradient $\nabla f:=(\nabla_1f,\ldots,\nabla_df)$ by $\nabla_i f(x)=f(x+e_i)-f(x)$. For any $g:\Z^d\to \R^d$ we define the discrete divergence $\nabla^*g:=\sum_{i=1}^d \nabla_i^*g_i$ by $\nabla_i^* g_i(x)=g_i(x-e_i)-g_i(x)$. We define $\nabla_\eps, \nabla_\eps^*$ similarly for functions defined on $\eps\Z^d$, i.e., $\nabla_{\eps,i}h(x)=\eps^{-1}(h(x+\eps e_i)-h(x))$ and $\nabla_{\eps,i}^*h(x)=\eps^{-1}(h(x-\eps e_i)-h(x))$ for $h:\eps\Z^d\to \R$.

For any $\eps>0$, we consider the following elliptic equation with a slowly varying source term:
\begin{equation}
\nabla^* \tilde{a}(x)\nabla u(x)=f(\eps x) \qquad (x\in \Z^d),
\label{mainEq}
\end{equation}
where $f\in\C_c(\R^d)$ is compactly supported and continuous. The unique solution to \eqref{mainEq} that decays to zero at infinity is given by  
\begin{equation*}
u(x)=\sum_{y\in \Z^d} G(x,y)f(\eps y),
\end{equation*}
with $G(x,y)$ the Green function of $\nabla^*\tilde{a}(x)\nabla$ (recall that the dependence on $\zeta$ is kept implicit, so $u$ and $G$ are random).

We define $u_\eps(x)=\eps^2u(\frac{x}{\eps})$, which solves
\begin{equation*}
\nabla^*_\eps \tilde{a}(\frac{x}{\eps}) \nabla_\eps u_\eps(x)=f(x) \qquad (x\in \eps \Z^d),
\end{equation*}
and its limit $u_\h$, the solution of the \emph{homogenized} equation in continuous space:
\begin{equation*}
-\nabla \cdot a_\h\nabla u_\h(x)=f(x) \qquad (x\in \R^d),
\end{equation*}
where the \emph{homogenized matrix} $a_\h$ is deterministic and constant in space. 

We are interested in the random fluctuations of $u_\eps$ after a spatial average. In other words, we think of $u_\eps$ as a (random) distribution $\U_\eps^{(f)}$, which acts on a test function $g\in \C_c(\R^d)$ as
\begin{equation}
\U_\eps^{(f)}(g):=\eps^d\sum_{x\in \eps\Z^d} u_\eps(x)g(x)=\eps^{d+2}\sum_{x,y\in \Z^d}G(x,y)g(\eps x)f(\eps y).
\label{glFlu}
\end{equation}

Following \cite{mourrat2015tight}, for any $\alpha < 0$, we denote by $\C^\alpha_\loc = \C^\alpha_\loc(\R^d)$ the (separable) local H\"older space of regularity $\alpha$. Here is our main result.
\begin{thm}
Recall that $d\geq 3$. We define the random distribution $\mathscr{U}_\eps^{(f)}$ by
\begin{equation*}
\mathscr{U}_\eps^{(f)}(g)=\eps^{-\frac{d}{2}}(\U_\eps^{(f)}(g)-\E\{\U_\eps^{(f)}(g)\}).
\end{equation*}
For every $\alpha<1-\frac{d}{2}$, the distribution $\mathscr{U}_\eps^{(f)}$ converges in law to $\mathscr{U}^{(f)}$ as $\eps\to 0$ for the topology of $\C_{\loc}^\alpha$, where $\mathscr{U}^{(f)}$ is the Gaussian random field such that for $g\in\C_c(\R^d)$, $\mathscr{U}^{(f)}(g)$ is a centered Gaussian with variance
\begin{equation}
\sigma_g^2=\int_{\R^{2d}}\mathscr{K}_f(x,z)g(x)g(z)dxdz
\label{mainVar}
\end{equation}
with $\mathscr{K}_f$ given explicitly by \eqref{gauK}.
\label{t:mainTh}
\end{thm}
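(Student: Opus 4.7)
The plan is to combine the Helffer–Sj\"ostrand representation for variances of smooth functionals of the i.i.d.\ Gaussian field $\zeta$ with a second-order Gaussian Poincar\'e (Stein) inequality, and to feed in a quantitative-homogenization input allowing one to replace the heterogeneous Green function by its homogenized counterpart plus correctors. Concretely, I would view $F_\eps:=\mathscr{U}_\eps^{(f)}(g)$ as a smooth functional of $\zeta$, identify the leading order of $\var(F_\eps)$, show that this leading order converges to $\sigma_g^2$, and then use Stein's method to upgrade to a one-dimensional CLT. Joint convergence of $(\mathscr{U}_\eps^{(f)}(g_1),\ldots,\mathscr{U}_\eps^{(f)}(g_k))$ follows by Cram\'er--Wold, and tightness in $\C_\loc^\alpha$ for $\alpha<1-d/2$ follows from the same variance bound applied to test functions $\psi_{x,\lambda}(y)=\lambda^{-d}\psi((y-x)/\lambda)$, via the Kolmogorov-type criterion of \cite{mourrat2015tight}.

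The variance analysis is where essentially all of the work takes place. Differentiating $\nabla^*\tilde a\nabla u=f(\eps\,\cdot\,)$ in $\zeta_e$ yields the Malliavin-type identity
$$\dr_{\zeta_e}F_\eps \;=\; -\eps^{d/2+2}\,\eta'(\zeta_e)\,\nabla_i u(\underline e)\,\nabla_i v_g(\underline e),$$
where $e=(\underline e,\underline e+e_i)$ and $v_g$ solves the adjoint problem $\nabla^*\tilde a\nabla v_g=g(\eps\,\cdot\,)$ (and symmetry of $\tilde a$ is used to match the directions of the gradients). The Helffer–Sj\"ostrand covariance identity writes $\var(F_\eps)$ as a weighted sum of the form $\sum_e \E\bigl[\dr_{\zeta_e}F_\eps\cdot R\,\dr_{\zeta_e}F_\eps\bigr]$ with $R$ the Ornstein–Uhlenbeck resolvent; to leading order this reduces to $\sum_e\E[(\dr_{\zeta_e}F_\eps)^2]$, the remainder being controlled by the second Malliavin derivative handled in the next paragraph. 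Inserting the two-scale expansion
$$\nabla u(\underline e)\;\approx\;(e_i+\nabla\phi^{(i)}(\underline e))\,\nabla_i u_\h(\eps\underline e),$$
and similarly for $\nabla v_g$ in terms of the adjoint homogenized solution, then passing to the Riemann-sum limit $\eps^d\sum_{\underline e}\to\int dx$ and using stationarity to average the tensor $\E[\eta'(\zeta_0)^2\,(e_i+\nabla\phi^{(i)})\otimes(e_j+\nabla\phi^{(j)})]$, produces
$$\var(F_\eps)\;\longrightarrow\;\int_{\R^{2d}}\mathscr{K}_f(x,z)\,g(x)\,g(z)\,dx\,dz\;=\;\sigma_g^2,$$
with $\mathscr{K}_f$ of the form prescribed by \eqref{gauK}. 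The hardest step is controlling the error in the two-scale replacement uniformly enough over the sum in $\underline e$; this is what forces one to use annealed moment bounds on $\nabla_x G$ and $\nabla_x\nabla_y G$ together with the large-scale $\C^{0,1}$ regularity developed in the quantitative-homogenization literature.

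For Gaussianity I would invoke the second-order Poincar\'e inequality for smooth functionals of a Gaussian vector (Chatterjee / Nourdin–Peccati): the Wasserstein distance of $F_\eps/\sqrt{\var F_\eps}$ to a standard Gaussian is controlled by
$$\frac{1}{\var F_\eps}\Ll(\sum_{e,e'}\E\Ll[(\dr^2_{\zeta_e\zeta_{e'}}F_\eps)^2\Rr]\Rr)^{1/2}\Ll(\sum_e\E[(\dr_{\zeta_e}F_\eps)^2]\Rr)^{1/2}.$$
The second Malliavin derivative $\dr^2_{\zeta_e\zeta_{e'}}F_\eps$ expands into a sum of products of four gradients of Green functions evaluated at $\underline e$ and $\underline{e'}$, and its $L^2$ norm is a quadruple sum that, thanks to the same annealed Green-function estimates used above, is a factor $\eps^{d/2}$ smaller than $\var F_\eps$. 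This yields $\mathscr{U}_\eps^{(f)}(g)\Rightarrow\mclN(0,\sigma_g^2)$, and Cram\'er–Wold promotes it to joint convergence. Finally, applying the variance bound of the previous paragraph to $g=\psi_{x,\lambda}$ yields $\E[\mathscr{U}_\eps^{(f)}(\psi_{x,\lambda})^2]\lesssim \lambda^{2-d}$ uniformly in $\eps$; Gaussian hypercontractivity of $F_\eps$ lifts this to all moments, and the criterion of \cite{mourrat2015tight} then gives tightness in $\C_\loc^\alpha$ for every $\alpha<1-d/2$.
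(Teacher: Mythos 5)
Your overall architecture (Helffer--Sj\"ostrand representation, two-scale expansion, Chatterjee-type second-order Poincar\'e inequality, tightness via \cite{mourrat2015tight}) matches the paper's, but your variance computation contains a genuine error: you discard the Ornstein--Uhlenbeck resolvent. The Helffer--Sj\"ostrand identity gives $\var\{F_\eps\}=\sum_{e}\langle\partial_e F_\eps,(1+\L)^{-1}\partial_e F_\eps\rangle$, and the difference from $\sum_e\E\{(\partial_e F_\eps)^2\}$ is $\sum_e\langle\partial_e F_\eps,\L(1+\L)^{-1}\partial_e F_\eps\rangle$, which is quadratic in the mixed second derivatives $\partial_{e'}\partial_e F_\eps$. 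By the bound \eqref{ppeUeps}, these satisfy $\|\partial_{e'}\partial_e\U_\eps^{(f)}(g)\|_p\les\eps^{d}|\eps\underline{e}|_*^{2-2d}|\underline{e}-\underline{e'}|_*^{-d}$; summing over $e'$ costs only an $O(1)$ factor, and the remaining sum over $e$ is of order $\eps^d$ --- exactly the order of $\var\{\U_\eps^{(f)}(g)\}$ itself. So the ``remainder'' you propose to absorb into the Stein estimate is not of lower order, and dropping the resolvent changes the limiting constant: indeed the kernel $\tilde K_{ijkl}$ in \eqref{excos}--\eqref{gauK} retains $(1+\L)^{-1}$ sandwiched between the two local flux-type quantities, whereas your reduction would produce the same expression with $(1+\L)^{-1}$ replaced by the identity, i.e.\ a generically strictly larger $\sigma_g^2$ (this replacement is precisely the spectral-gap upper bound, which is an inequality, not an asymptotic identity). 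The correct route, which the paper follows, is to keep the resolvent, use that $(1+\L)^{-1}$ is an $L^p$ contraction, and substitute the quantitative $L^2$ two-scale expansion of $\nabla G(x,e)$ (Proposition~\ref{p:2scaleGr}, proved via the flux corrector of Lemma~\ref{l:geneCor}) factor by factor inside the pairing; the resolvent then survives into the stationary coefficient $K_{ijkl}(e)$, and the error terms are summable by the annealed estimates of Proposition~\ref{p:esGr}.

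A secondary gap is in the tightness step: you invoke ``Gaussian hypercontractivity of $F_\eps$'' to upgrade the second-moment bound $\E\{\mathscr{U}_\eps^{(f)}(\psi_{x,\lambda})^2\}\les\lambda^{2-d}$ to all moments, but $F_\eps$ is not a Wiener chaos of fixed order (the Green function is a genuinely nonlinear functional of $\zeta$), so hypercontractivity does not apply. The paper instead proves $\|\mathscr{U}_\eps^{(f)}(g_\lambda)\|_p\le C\lambda^{1-\frac d2}$ by induction on dyadic $p$, combining the spectral gap inequality with the annealed gradient bounds; you would need an argument of this type to close the tightness proof. The remaining parts of your outline --- the vertical-derivative formula for $\partial_e F_\eps$, the second-order Poincar\'e step giving a ratio of order $\eps^{d/2}$ up to logarithms, and Cram\'er--Wold for joint convergence --- are consistent with the paper.
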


\begin{rem}
\label{re:sospde}
We can write the solution to \eqref{e.defU} formally as
\[
\begin{aligned}
\mathscr{U}(x)&=\int_{\R^d}\cG_\h(x-y)\nabla\cdot (W(y)\nabla u_\h(y))dy\\
&=\int_{\R^{2d}}\nabla \cG_\h(x-y) \cdot W(y)\nabla \cG_\h(y-z)f(z)dzdy,
\end{aligned}
\]
where $\cG_\h$ is the Green function associated with $-\nabla\cdot a_\h \nabla$. By a straightforward calculation, this random distribution tested with $g$ has a Gaussian distribution with mean zero and variance given by
\[
\E\{|\int_{\R^d}\mathscr{U}(x)g(x)dx|^2\}=\int_{\R^{2d}}\mathscr{K}_f(x,z)g(x)g(z)dxdz,
\]
provided the covariance of the white noise $W$ is given by $\{\tilde{K}_{ijkl}\}$ that appears in the definition of $\mathscr{K}_f(x,z)$. In other words, the limiting distribution $\mathscr{U}^{(f)}$ obtained in Theorem~\ref{t:mainTh} can be represented as the solution of \eqref{e.defU}.
\end{rem}

\begin{rem}
Theorem~\ref{t:mainTh} implies the joint convergence in law of $(\mathscr{U}_\eps^{(f_1)}, \ldots, \mathscr{U}_\eps^{(f_k)})$ to a Gaussian vector field whose covariance structure is obtained by polarization of $f \mapsto \mathscr{K}_f$. Indeed, this follows from the fact that Theorem~\ref{t:mainTh} gives the scaling limit of any linear combination of $\mathscr{U}_\eps^{(f_1)}, \ldots, \mathscr{U}_\eps^{(f_k)}$, by linearity of $f \mapsto \mathscr{U}_\eps^{(f)}$.
\end{rem}

\begin{rem}
A similar result is proved in \cite{gu2015} when $d=1$, using a different method. In this case, $\mathscr{U}$ in \eqref{e.defU} has H\"older regularity $\alpha$ for every $\alpha<\frac12$, so it makes sense as a function. We expect the result to hold for $d=2$ as well, but our method would have to be modified to handle the fact that only the gradient of $u$ is really defined by \eqref{mainEq} in this case.
\end{rem}

Theorem~\ref{t:mainTh} is a consequence of the following two propositions.

\begin{prop}
For every $g\in\C_c(\R^d)$, $\mathscr{U}_\eps^{(f)}(g)$ converges in law to $\mathscr{U}^{(f)}(g)$ as $\eps\to 0$.
\label{p:conG}
\end{prop}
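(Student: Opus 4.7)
The plan is to establish the one-dimensional CLT for $X_\eps := \mathscr{U}_\eps^{(f)}(g)$ by a quantitative normal approximation argument, using the Gaussian structure of $\zeta = (\zeta_e)_{e\in\B}$ together with annealed Green function estimates. Concretely, I would invoke the second-order Gaussian Poincar\'e inequality (Chatterjee's inequality, or equivalently the Nourdin--Peccati bound via the Helffer--Sj\"ostrand / Malliavin representation) which, for a smooth functional $X_\eps = F(\zeta)$ of i.i.d.\ standard Gaussians, controls a suitable distance between $X_\eps$ and $N(0,\mathrm{Var}(X_\eps))$ by a quantity of the form
\[
C\,\frac{\bigl\|\|\nabla^2 F\|_{\mathrm{op}}\bigr\|_4\,\|\nabla F\|_4}{\mathrm{Var}(X_\eps)}.
\]
Thus it suffices to show two things: (i) $\mathrm{Var}(X_\eps)\to \sigma_g^2$ with $\sigma_g^2>0$, and (ii) the numerator above stays bounded (or grows slower than the denominator).

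For the derivative computations, I would differentiate the defining equation $\nabla^*\tilde a\,\nabla u = f(\eps\,\cdot)$ in $\zeta_e$. The identity $\partial_{\zeta_e} G(x,y) = -\eta'(\zeta_e)\,\nabla_i G(x,\underline e)\,\nabla_i G(y,\underline e)$ (for $e=(\underline e,\underline e + e_i)$, using the symmetry of $G$) expresses $\partial_{\zeta_e} X_\eps$ as a weighted sum of products of two Green-function gradients, and $\partial_{\zeta_{e'}}\partial_{\zeta_e} X_\eps$ as a sum over triples of such gradients. Substituting into the Poincar\'e bound produces deterministic sums of products of $|\nabla G|$ factors, weighted by $g(\eps\cdot)$ and $f(\eps\cdot)$.

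To control these sums I would rely on the annealed pointwise bound $\E[|\nabla_x\nabla_y G(x,y)|^p]^{1/p}\lesssim (1+|x-y|)^{-d}$ and $\E[|\nabla G(x,y)|^p]^{1/p}\lesssim (1+|x-y|)^{-(d-1)}$, which in the discrete i.i.d.\ setting with $d\ge 3$ are available via Delmotte--Deuschel type estimates together with the large-scale $C^{0,1}$ regularity of Armstrong--Kuusi--Mourrat. A straightforward convolution/integration argument using these bounds (taking advantage of $d\ge 3$ so that all the relevant kernels are integrable at infinity after the $\eps^d$ spatial averaging) should give $\|\nabla F\|_4=O(1)$ and $\|\|\nabla^2 F\|_{\mathrm{op}}\|_4=O(\eps^{d/2}|\log\eps|^\kappa)$, so that the Stein bound vanishes as $\eps\to 0$.

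The delicate step, which I expect to be the main obstacle, is the identification of the limiting variance $\sigma_g^2$ given by $\mathscr K_f$ as in \eqref{mainVar}. After applying the Helffer--Sj\"ostrand representation of $\mathrm{Var}(X_\eps)$, one obtains a sum
\[
\mathrm{Var}(X_\eps)=\eps^{2d+4}\sum_{x_1,y_1,x_2,y_2,e}\E\bigl[\partial_{\zeta_e}(G(x_1,y_1))\,\mathscr{R}_e(G(x_2,y_2))\bigr]\,g(\eps x_1)g(\eps x_2)f(\eps y_1)f(\eps y_2),
\]
where $\mathscr R_e$ is the Helffer--Sj\"ostrand resolvent acting on $\zeta_e$. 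Passing to the limit requires showing that the rescaled two-point correlations of gradients of $G$ converge in the appropriate sense to $\nabla\cG_\h\otimes\nabla\cG_\h$, contracted with the effective covariance tensor $\{\tilde K_{ijkl}\}$ of the homogenized white noise $W$. This should follow from homogenization of the mixed first/second derivatives of $G$ (Green's function homogenization as in Bella--Giunti--Otto / Gloria--Neukamm--Otto), combined with the fact that the correlation between $\partial_{\zeta_e}G$ and its Helffer--Sj\"ostrand dual decouples on scales larger than the correlation length of $\zeta$, leaving only a Dirac-like contribution that reproduces the white-noise structure of $W$.
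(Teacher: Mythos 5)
Your second half (the normal approximation once the variance limit is known) is essentially the paper's route: a second-order Poincar\'e inequality of Chatterjee type applied to $F=\mathscr{U}_\eps^{(f)}(g)$, with $\partial_e G(x,y)=-\partial_e a_e\,\nabla G(x,e)\nabla G(y,e)$ and the annealed bounds $\|\nabla G(x,e)\|_p\lesssim |x-\underline e|_*^{1-d}$, $\|\nabla\nabla G(e,e')\|_p\lesssim|\underline e-\underline e'|_*^{-d}$ fed into the Stein bound; the paper uses the edge-summed variant $\sum_{e'}\bigl(\sum_e\|\partial_eF\|_4\|\partial_e\partial_{e'}F\|_4\bigr)^2$ from \cite{MN} rather than the operator norm of the Hessian, which avoids having to control $\|\nabla^2F\|_{\mathrm{op}}$ directly (your claimed $O(\eps^{d/2}|\log\eps|^\kappa)$ for it would not follow from a Hilbert--Schmidt bound, which is only $O(1)$ after normalization, so you would need a Schur-type row-sum argument -- i.e.\ essentially the quantity the paper estimates). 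That part is fine in outline.

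The genuine gap is exactly where you say ``this should follow'': the identification $\eps^{-d}\var\{\U_\eps^{(f)}(g)\}\to\sigma_g^2$. After Helffer--Sj\"ostrand, each covariance $\mathrm{Cov}\{G(x,y),G(z,w)\}$ is a sum over edges of $\langle\partial_ea_e\nabla G(x,e)\nabla G(y,e),(1+\L)^{-1}\partial_ea_e\nabla G(z,e)\nabla G(w,e)\rangle$ as in \eqref{covGr1}, and the coefficients $\tilde K_{ijkl}$ in \eqref{excos} involve the \emph{corrector-corrected} local gradients $(e_k+\nabla\tilde\phi_k)(e)$ correlated with $\partial_ea_e$ through the resolvent $(1+\L)^{-1}$. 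A homogenization statement for $\nabla G$ or $\nabla\nabla G$ in an averaged or weak sense does not produce these coefficients: what is needed is a two-scale expansion of $\nabla G(x,\cdot)$ \emph{pointwise in the edge and in $L^2(\Omega)$}, namely $\nabla G(x,e)\approx\sum_k\nabla_kG_\h(e-x)(e_k+\nabla\tilde\phi_k)(e)$ with an error $O(\log|\underline e-x|_*/|\underline e-x|_*^{d})$, i.e.\ one full order better than the size $|\underline e-x|_*^{1-d}$ of $\nabla G$ itself; only with that accuracy can each of the four gradient factors in \eqref{covGr1} be replaced and the resulting error summed against the remaining three factors (this is Proposition~\ref{p:2scaleGr} and the error analysis of Section~5). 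Your sketch neither states this expansion nor indicates how to prove it; in the paper its proof is itself nontrivial and uses a new ingredient, the skew-symmetric flux corrector $\sigma_{ijk}$ with $L^4(\Omega)$ bounds (Lemma~\ref{l:geneCor}), which puts the two-scale remainder in divergence form and upgrades the earlier $|x|^{1-d}$-type estimates to the $|x|^{-d}\log|x|$ rate. The heuristic that correlations ``decouple, leaving a Dirac-like contribution'' is the statement to be proved, not an argument; note also that getting the \emph{wrong} local structure (e.g.\ dropping the $\nabla\tilde\phi_k$ oscillation) would change $\tilde K_{ijkl}$ and hence the limit, as the paper's comparison with the naive two-scale expansion of $u_\eps$ illustrates. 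As written, the proposal therefore asserts rather than proves the variance convergence, which is the core of the proposition.
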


\begin{rem}
In fact, when $\sigma_g^2 \neq 0$, our proof gives a rate of convergence of $\mathscr{U}_\eps^{(f)}(g)$ to $\mathscr{U}^{(f)}(g)$, see Remark~\ref{r:rate} below.
\end{rem}

\begin{prop}
For every $\alpha<1-\frac{d}{2}$, $\mathscr{U}_\eps^{(f)}$ is tight in $\C_{\loc}^\alpha$.
\label{p:tight}
\end{prop}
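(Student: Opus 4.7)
The plan is to apply the abstract tightness criterion for random distributions in local H\"older spaces of negative index from \cite{mourrat2015tight}. That criterion reduces Proposition~\ref{p:tight} to showing that, for every compact $K\subset\R^d$, every $\beta\in(\alpha,1-d/2)$, and every integer $p$ large enough (depending on $\beta-\alpha$ and $d$), one has
\begin{equation*}
\sup_{\eps\in(0,1]}\;\sup_{x\in K,\,r\in(0,1]}\;r^{-\beta p}\,\E\bigl[\bigl|\mathscr{U}_\eps^{(f)}(\psi_{x,r})\bigr|^p\bigr]\;<\;\infty,
\end{equation*}
uniformly over a fixed family of smooth normalized bumps $\psi_{x,r}(y)=r^{-d}\psi((y-x)/r)$. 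Since $\mathscr{U}_\eps^{(f)}(\psi_{x,r})$ is a centered, smooth functional of the i.i.d.\ Gaussian family $\zeta$, the $p$-th moment bound can be deduced from the variance by Gaussian hypercontractivity/concentration (via Malliavin or Ornstein--Uhlenbeck estimates), so the heart of the argument is the variance inequality
\begin{equation*}
\eps^{-d}\,\var\bigl(\U_\eps^{(f)}(\psi_{x,r})\bigr)\;\lesssim\;r^{2\beta}
\end{equation*}
for any $\beta<1-d/2$, uniformly in $\eps$ and $x\in K$.

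The variance bound comes from the Gaussian Poincar\'e (spectral-gap / Helffer--Sj\"ostrand) inequality
\begin{equation*}
\var\bigl(\U_\eps^{(f)}(\psi_{x,r})\bigr)\;\leq\;\sum_{e\in\B}\E\Bigl[\bigl(\partial_{\zeta_e}\U_\eps^{(f)}(\psi_{x,r})\bigr)^{2}\Bigr]
\end{equation*}
combined with the identity $\partial_{\zeta_e}G(y,z)=-\eta'(\zeta_e)\,(e_{i(e)}\cdot\nabla G(y,\underline{e}))\,(e_{i(e)}\cdot\nabla G(\underline{e},z))$. Each summand thus factors into two linear expressions in the gradient of the Green function, one tested against $f(\eps\,\cdot)$ (slowly varying, of unit scale) and the other against the rescaled bump $\psi_{x,r}(\eps\,\cdot)$ (localized at the small scale $r/\eps$). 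Plugging in annealed moment bounds for $\nabla G$ of Delmotte--Deuschel type, which are available for i.i.d.\ conductances in $d\geq 3$, and carrying out the summations over $y$, $z$ and the edge variable $e$ yields the expected scaling $\eps^{-d}\var\lesssim r^{2-d}$ whenever $r\gtrsim\eps$. This matches the rate exhibited at small scales by the limit distribution $\mathscr{U}^{(f)}$ of Remark~\ref{re:sospde} via its near-Gaussian-free-field structure, so the required gap $\beta<1-d/2$ is reached (with a power $r^{\delta}$ to spare) after the concentration step.

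The sub-lattice regime $r<\eps$ is handled separately: because $u_\eps$ is piecewise constant on cells of $\eps\Z^d$ and $\psi$ is smooth with $\int\psi=1$, the pairing $\U_\eps^{(f)}(\psi_{x,r})$ can be compared to a bounded linear combination of the values of $u_\eps$ at a fixed number of lattice sites near $x$, and hence to $\U_\eps^{(f)}(\psi_{x',\eps})$ for a nearby $x'$, reducing $r<\eps$ to the case $r\asymp\eps$ already treated. I expect the genuinely delicate input in this plan to be the annealed gradient Green function estimate with the correct decay $(1+|y-\underline{e}|)^{-(d-1)}$, which is where the i.i.d.\ assumption and the dimension $d\geq 3$ are essential; once that estimate is taken as a black box, everything else is scaling and combinatorial book-keeping, together with the translation-invariance in law that delivers the required uniformity over $x\in K$.
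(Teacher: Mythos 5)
Your overall route is the same as the paper's: reduce tightness via the criterion of \cite{mourrat2015tight} to uniform moment bounds on $\mathscr{U}_\eps^{(f)}$ tested against rescaled bumps, compute $\partial_e\U_\eps^{(f)}(g_\lambda)$ through $\partial_e G(x,y)=-\partial_e a_e\nabla G(x,e)\nabla G(y,e)$, plug in the annealed estimates $\|\nabla G(x,e)\|_p\les|x-\underline e|_*^{1-d}$ (Proposition~\ref{p:esGr}, used as a black box exactly as you propose), and sum to get the scaling $\eps^{-d}\var\{\U_\eps^{(f)}(g_\lambda)\}\les\lambda^{2-d}$, which is indeed the heart of the matter and matches the paper's bound $\|\mathscr{U}_\eps^{(f)}(g_\lambda)\|_p\les\lambda^{1-d/2}$.

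The one step that does not work as you state it is the upgrade from the variance to $p$-th moments ``by Gaussian hypercontractivity/concentration''. Hypercontractivity gives $\|X\|_p\les\|X\|_2$ only for random variables in a Wiener chaos of fixed order, and $\U_\eps^{(f)}(\psi_{x,r})$ is a genuinely nonlinear functional of the Gaussian field (the Green function depends on the conductances in a non-polynomial way), so the variance bound alone does not imply the higher moment bounds. What saves the argument is that you already control $\|\partial_e\U_\eps^{(f)}(\psi_{x,r})\|_{q}$ for all $q$ (the annealed estimates hold in every $L^q$), and one then needs an $L^p$ version of the spectral gap inequality rather than hypercontractivity. The paper does precisely this by hand: setting $X_{\eps,\lambda}=\lambda^{d/2-1}\mathscr{U}_\eps^{(f)}(g_\lambda)$, it proves $\E\{X_{\eps,\lambda}^{2n}\}\les1$ by induction on $n$, writing $\E\{X^{2n}\}\les 1+\var\{X^n\}$, applying the spectral gap inequality to $X^n$, and using H\"older to reduce to $\sum_{e\in\B}\E\{|\partial_e X_{\eps,\lambda}|^{2n}\}^{1/n}\les1$, which is exactly the edge-sum you compute for the variance (with the two factors $|\tfrac\eps\lambda\underline e|_*^{1-d}$ and $|\eps\underline e|_*^{1-d}$ making the sum over $e$ finite, uniformly in $\lambda\le1$, so no separate treatment of the regime $r<\eps$ is needed). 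So your plan is correct in substance, but the moment-upgrade has to be run through this inductive spectral-gap argument (or an equivalent $L^p$ Gaussian Poincar\'e inequality), not through hypercontractivity.
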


\subsection{Context}

Stochastic homogenization of divergence form operators started from the work of Kozlov \cite{kozlov1979averaging} and Papanicolaou-Varadhan \cite{papanicolaou1979boundary}, where a \emph{qualitative} convergence of heterogeneous random operators to homogeneous deterministic ones is proved. \emph{Quantitative} aspects were explored as early as in \cite{yurinskii1986averaging}. However, optimal bounds on the size of errors were obtained only recently in a series of papers \cite{gloria2011optimal,gloria2012optimal, mourrat2012kantorovich, gloria2013quantification, gloria2014optimal, gloria2014quantitative,gloria2014improved}. Regularity estimates that are optimal in terms of stochastic integrability have been worked out in \cite{armstrong1,armstrong2,gloria2014optimal}.

Our focus in this paper is to go beyond estimating the size of the errors, and understand the probability law of the rescaled random fluctuations. In this direction, central limit theorems for approximations of homogenized coefficients are obtained in \cite{nolen2011normal,biskup2014central,rossignol2012noise, nolen2014normal,GN2014CLT}. The scaling limit of the corrector is investigated in \cite{mourrat2014correlation,MN}. In the continuous setting, \cite{gu-mourrat} indicates that when $d\geq 3$, the corrector should capture the first order fluctuation of the heterogeneous solution in a pointwise sense, but it is not clear whether it captures the fluctuations of the solution after a spatial average. A surprising feature of our result is that the limiting fluctuations are \emph{not} those induced by the corrector alone.

Our approach is based on that of \cite{mourrat2014correlation,MN}. The fact that $\mathscr{U}_\eps^{(f)}(g)$ divided by its standard deviation converges to a standard Gaussian is derived using a second order Poincar\'e inequality developed by Chatterjee \cite{chatterjee2009fluctuations}, in the spirit of Stein's method. (We will in fact use a slightly more convenient form of this result derived in \cite{MN}.) The main difficulty lies in the proof of the convergence of the variance of $\mathscr{U}_\eps^{(f)}(g)$. A Helffer-Sj\"ostrand formula enables to rewrite this variance in terms of gradients of the Green function. A quantitative two-scale expansion for the gradient of the Green function was worked out in \cite{mourrat2014correlation}. Here, we follow the idea of \cite{gloria2014regularity} of introducing a stationary skew-symmetric tensor, which is denoted by $\{\sigma_{ijk}\}_{i,j,k=1}^d$ and relates to the flux (see Lemma~\ref{l:geneCor}). In the language of differential forms, the flux in the $i$-th direction is a co-closed $1$-form, and we represent it as the co-differential of the $2$-form~$\sigma_i$. This object enables us to represent the error in the two-scale expansion in divergence form, and thus significantly improve the two-scale expansion and simplify the subsequent analysis.

\subsection{Organization of the paper}

The rest of the paper is organized as follows. We introduce basic notation and recall key estimates on correctors and Green functions in Section~\ref{s:stp}. Then we present some key ingredients in proving Theorem~\ref{t:mainTh} in Section~\ref{s:igd}, including the Helffer-Sj\"ostrand covariance representation, a quantitative two-scale expansion of the Green function and a second order Poincar\'e inequality. The proofs of Propositions~\ref{p:conG} and \ref{p:tight} are contained in Sections~\ref{s:cv}, \ref{s:cg} and \ref{s:tt}. Technical lemmas are left in the appendix.

\section{Setup}
\label{s:stp}

\subsection{Asymptotic variance}
For $x\in \Z^d$, we define the shift operator $\tau_x$ on $\Omega$ by $(\tau_x \zeta)_e=\zeta_{x+e}$, where $x+e:=(x+\underline{e},x+\bar{e})$ is the edge obtained by shifting $e$ by $x$. Since $\{\zeta_e\}_{e\in\mathbb{B}}$ are i.i.d., $\{\tau_x\}_{x\in \Z^d}$ is a group of measure-preserving transformations. With any measurable function $f:\Omega\to \R$, we can associate a stationary random field $\tilde{f}(x,\zeta)$ defined by
\begin{equation}
\tilde{f}(x,\zeta)=(T_xf)(\zeta)=f(\tau_x\zeta).
\end{equation}
The generators of $T_x$, denoted by $\{D_i\}_{i=1}^d$, are defined by $D_if:=T_{e_i}f-f$. The adjoint $D_i^*$ is defined by $D_i^* f:=T_{-e_i}f-f$. We denote the gradient on $\Omega$ by $D=(D_1,\ldots,D_d)$ and the divergence $D^*g:=\sum_{i=1}^d D_i^*g_i$ for $g:\Omega\to \R^d$. The inner product in $L^2(\Omega)$ and norm in $L^p(\Omega)$ are denoted by $\langle\cdot,\cdot\rangle$ and $\|\cdot\|_p$ respectively.

Most of the time, we keep the dependence on $\zeta$ implicit and write $\tilde{f}(x)=\tilde{f}(x,\zeta)=f(\tau_x\zeta)$. For any $e\in\mathbb{B}$, the discrete derivative on $e$ is defined by $\nabla\tilde{f}(e):=\tilde{f}(\bar{e})-\tilde{f}(\underline{e})$. If $e$ is in the $i-$th direction, we define $\xi(e):=\xi_i$ for any $\xi\in \R^d$, as the projection of $\xi$ onto $e$. 

For the random coefficients appearing in \eqref{mainEq}, we can define
\begin{equation*}
a(\zeta)=\mathrm{diag}(a_1(\zeta),\ldots,a_d(\zeta)):=\mathrm{diag}(\eta(\zeta_{e_1}),\ldots,\eta(\zeta_{e_d}))
\end{equation*}
so that $\tilde{a}(x,\zeta)=a(\tau_x\zeta)=\mathrm{diag}(\eta(\zeta_{x+e_1}),\ldots,\eta(\zeta_{x+e_d}))$.  Note that we also used $e_1,\ldots,e_d$ to denote the corresponding edges $(0,e_1),\ldots,(0,e_d)$. Recall that we assume that $C^{-1}<\eta<C$ and $|\eta'|,|\eta''|<C $ for some $C < \infty$. For simplicity we will henceforth write $a_e=\eta(\zeta_e)$.

Under the above assumptions, it is well-known that there exists a constant matrix $a_\h$ such that the operator $\nabla^*\tilde{a}\nabla$ homogenizes over large scale to the continuous operator $-\nabla\cdot a_\h\nabla$, the Green function of which we denote as $\cG_\h$.

One of the main ingredients in the analysis of stochastic homogenization is the so-called \emph{corrector}. For any fixed $\xi\in \R^d$ and $\lambda>0$, the regularized corrector $\phi_{\lambda,\xi}$ is defined through the following equation on probability space:
\begin{equation}
\lambda\phi_{\lambda,\xi}+D^*a(D\phi_{\lambda,\xi}+\xi)=0.
\label{mascor}
\end{equation}
It is proved in \cite{gloria2011optimal} that as $\lambda\to0$, $\phi_{\lambda,\xi}\to \phi_\xi$ in $L^2(\Omega)$, i.e., a stationary corrector $\phi_\xi$ exists such that 
\begin{equation}
D^*a(D\phi_\xi+\xi)=0.
\label{cor}
\end{equation} 
For $i=1,\ldots,d$, we will write $\phi_i=\phi_{e_i}$ and $\phi_{\lambda,i}=\phi_{\lambda,e_i}$. The homogenized matrix $a_\h$ is given by 
\begin{equation}
\xi^Ta_\h\xi=\E\{(\xi+D\phi_\xi)^Ta(\xi+D\phi_\xi)\}.
\label{effcst}
\end{equation}
 In the context of i.i.d.\ randomness, we have $a_\h=\bar{a}I_d$ for some constant $\bar{a}$, where $I_d$ is the identity matrix.

For a random variable $F\in L^2(\Omega)$, we say that $U=\partial_e F\in L^2(\Omega)$ is the weak derivative of $F$ with respect to $\zeta_e$ if the following holds: for any finite subset $\Lambda\in \B$ and any smooth, compactly supported function $G:\R^{|\Lambda|}\to \R$, we have
\begin{equation}
\E\{U G(\zeta)\}=\E\{F \zeta_e G(\zeta)\}-\E\{F\frac{\partial G}{\partial \zeta_e}(\zeta)\},
\label{eq:defwd}
\end{equation}
where $G(\zeta)$ depends only on $\{\zeta_{e'}\}_{e'\in\Lambda}$. We also call $\partial_e$ the \emph{vertical derivative},
and by \eqref{eq:defwd}, its adjoint (under the Gaussian measure) is
\begin{equation*}
\partial_e^*=-\partial_e+\zeta_e.
\end{equation*}
We define $\partial f:=(\partial_ef)_{e\in\mathbb{B}}$ and for $F=(F_e)_{e\in\mathbb{B}}$, $\partial^* F:=\sum_{e\in\mathbb{B}}\partial_e^*F_e$. The vertical Laplacian on the probability space is then defined by
\begin{equation*}
\L:=\partial^*\partial.
\end{equation*}

For $i,j,k,l=1,\ldots,d$, define
\begin{equation*}
K_{ijkl}(e):=\langle \partial_e a_e(e_i+\nabla\tilde{\phi}_i)(e)(e_j+\nabla\tilde{\phi}_j)(e),(1+\L)^{-1}\partial_e a_e(e_k+\nabla\tilde{\phi}_k)(e)(e_l+\nabla\tilde{\phi}_l)(e)\rangle,
\end{equation*}
and
\begin{equation}
\tilde{K}_{ijkl}:=\sum_{n=1}^d K_{ijkl}(e_n).
\label{excos}
\end{equation} 
The kernel $\K_f(x,z)$ appearing in Theorem~\ref{t:mainTh} is given by 
\begin{equation}
\begin{aligned}
\K_f(x,z)=\sum_{i,j,k,l=1}^d \tilde{K}_{ijkl}\int_{\R^{3d}}&\partial_{x_i}\cG_\h(v-x)\partial_{x_j}\cG_\h(v-y)\partial_{x_k}\cG_\h(v-z)\partial_{x_l}\cG_\h(v-w)\\
&f(y)f(w)dydwdv.
\label{gauK}
\end{aligned}
\end{equation}

\subsection{A comparison with the two-scale expansion}
Let us see that the global fluctuations of $u_\eps$ are \emph{not} those suggested by its two-scale expansion. Recall that $u_\eps$ and $u_\h$ satisfy
\begin{equation*}
\nabla^*_\eps \tilde{a}(\frac{x}{\eps}) \nabla_\eps u_\eps(x)=f(x) \qquad (x\in \eps \Z^d),
\end{equation*}
and 
\begin{equation*}
-\nabla\cdot a_\h\nabla u_\h(x)=f(x) \qquad (x\in\R^d).
\end{equation*}
A formal two-scale expansion gives
\begin{equation}
u_\eps(x)=u_\h(x)+\eps \nabla u_\h(x)\cdot \phi(\frac{x}{\eps})+o(\eps),
\label{2scale}
\end{equation}
where $\phi=(\phi_1,\ldots,\phi_d)$. In the continuous setting, \eqref{2scale} is proved rigorously with $o(\eps)/\eps\to 0$ in $L^1(\Omega)$ for fixed $x$ \cite[Theorem 2.3]{gu-mourrat}, i.e., the first order correction is indeed given by the corrector in a pointwise sense. Since $\phi$ is centered, we have a similar expansion for the random fluctuation, i.e.,
\begin{equation*}
u_\eps(x)=\E\{u_\eps(x)\}+\eps \nabla u_\h(x)\cdot \phi(\frac{x}{\eps})+o(\eps).
\end{equation*}
Concerning global fluctuations, we need to compare the random field $u_\eps(x)-\E\{u_\eps(x)\}$ with $\eps \nabla u_\h(x)\cdot \phi(\frac{x}{\eps})$. For a test function $g\in\C_c(\R^d)$, Theorem~\ref{t:mainTh} shows 
\begin{equation}
\eps^{\frac{d}{2}}\sum_{x\in\eps \Z^d}(u_\eps(x)-\E\{u_\eps(x)\})g(x)\Rightarrow N(0,\sigma_g^2),
\end{equation}
with $\sigma_g^2=\int_{\R^{2d}}\K_f(x,z)g(x)g(z)dxdz$. By \cite[Theorem~1.1]{MN}, we have
\begin{equation*}
\eps^{\frac{d}{2}}\sum_{x\in \eps\Z^d}\eps \nabla u_\h(x)\cdot \phi(\frac{x}{\eps})g(x)\Rightarrow N(0,\tilde{\sigma}_g^2),
\end{equation*}
with $\tilde{\sigma}_g^2=\int_{\R^{2d}}\tilde{\K}_f(x,z)g(x)g(z)dxdz$ and 
\begin{equation}
\begin{aligned}
\tilde{\K}_f(x,z)=\sum_{i,j,k,l=1}^d \tilde{K}_{ijkl}\int_{\R^{3d}}&\partial_{x_i}\cG_\h(v-x)\partial_{x_j}\cG_\h(x-y)\partial_{x_k}\cG_\h(v-z)\partial_{x_l}\cG_\h(z-w)\\
&f(y)f(w)dydwdv.
\label{gauKc}
\end{aligned}
\end{equation}
If $\sigma_g^2$ and $\td \sigma_g^2$ were equal for every admissible $f$ and $g$, then
$$
\sum_{i,j,k,l=1}^d \tilde{K}_{ijkl}\int_{\R^{d}}\partial_{x_i}\cG_\h(v-x)\partial_{x_j}\cG_\h(v-y)\partial_{x_k}\cG_\h(v-z)\partial_{x_l}\cG_\h(v-w) dv
$$
and 
$$
\sum_{i,j,k,l=1}^d \tilde{K}_{ijkl}
\int_{\R^{d}}\partial_{x_i}\cG_\h(v-x)\partial_{x_j}\cG_\h(x-y)\partial_{x_k}\cG_\h(v-z)\partial_{x_l}\cG_\h(z-w) dv
$$
would have to be equal almost everywhere (as functions of $x,y,z$ and $w$). However, the first quantity diverges when $y$ gets close to $w$ since $d\geq 3$, while this is not so for the second quantity. This shows that the fluctuations of $u_\eps$ are not those suggested by the two-scale expansion.

The heuristics in Section~\ref{s:heu} provide a clear picture of the above phenomenon (other than the explanation that $o(\eps)$ may contribute on the level of $\eps^{\frac{d}{2}}$ when $d\geq 3$). If we write the solution to \eqref{e.def-Phi} as $\Phi(x)=\int_{\R^d}\nabla \cG_\h(x-y)\cdot W(y)p dy$,
the rescaled limit of the corrector $\eps \nabla u_\h(x)\cdot \phi(\frac{x}{\eps})$ is
\begin{equation}
\eps^{-\frac{d}{2}+1} \nabla u_\h(x)\cdot \phi(\frac{x}{\eps})\to \int_{\R^d}\nabla \cG_\h(x-y)\cdot W(y)\nabla u_\h(x)dy,
\label{eq:limitcor}
\end{equation}
and we already know from Remark~\ref{re:sospde} that the rescaled limit of $u_\eps(x)-\E\{u_\eps(x)\}$ is
\begin{equation}
\eps^{-\frac{d}{2}}(u_\eps(x)-\E\{u_\eps(x)\})\to \int_{\R^d}\nabla \cG_\h(x-y)\cdot W(y)\nabla u_\h(y)dy.
\label{eq:limitso}
\end{equation}
Comparing the r.h.s. of \eqref{eq:limitcor} and \eqref{eq:limitso}, it is clear that they are two different Gaussian random fields. We further observe that they are linked through a Taylor expansion of $\nabla u_\h(y)$ around $x$. By writing 
\[
\nabla u_\h(y)=\nabla u_\h(x)+\nabla^2 u_\h(x)(y-x)+\ldots,
\]
with $\nabla^2 u_\h(x)$ the Hessian of $u_\h$, we have
\begin{equation}
\begin{aligned}
 &\int_{\R^d}\nabla \cG_\h(x-y)\cdot W(y)\nabla u_\h(y)dy\\
 =& \int_{\R^d}\nabla \cG_\h(x-y)\cdot W(y)\nabla u_\h(x)dy\\
 & +\int_{\R^d}\nabla \cG_\h(x-y)\cdot W(y)\nabla^2 u_\h(x)(y-x)dy+\ldots.
\end{aligned}
 \label{eq:exdis}
 \end{equation}
 The term $ \int_{\R^d}\nabla \cG_\h(x-y)\cdot W(y)\nabla^2 u_\h(x)(y-x)dy$ should correspond to the second order corrector obtained by the two-scale expansion, and we also expect those higher order terms appearing in \eqref{eq:exdis} to correspond to the rescaled limit of the higher order correctors (provided that they are stationary). It does not seem possible to simply add a finite number of terms in the two-scale expansion to recover the correct limiting field.

\subsection{Properties of correctors and Green functions}

We summarize here several results obtained in \cite{gloria2011optimal,marahrens2013annealed} which will be used frequently throughout the paper. Let $|x|$ be the norm of $x\in \Z^d$, and $|x|_*=2+|x|$.

\begin{prop}[Existence of stationary corrector and moment bounds \cite{gloria2011optimal}]
Recall that we assume $d \ge 3$. For every $\lambda > 0$, there exists a unique stationary solution $\phi_{\lambda,\xi}$ to equation \eqref{mascor}. Moreover, for every $p \ge 1$, $\E\{|\phi_{\lambda,\xi}|^p\}$ and $\E\{|D \phi_{\lambda,\xi}|^p\}$ are uniformly bounded in $\lambda > 0$. The limit $\phi_\xi = \lim_{\lambda \to 0} \phi_{\lambda,\xi}$ is well-defined in $L^p(\Omega)$ and is the unique centered stationary solution to \eqref{cor}.
\label{p:esCo}
\end{prop}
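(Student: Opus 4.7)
My plan is as follows. For each fixed $\lambda > 0$, existence and uniqueness of $\phi_{\lambda,\xi} \in L^2(\Omega)$ follow from the Lax--Milgram theorem applied to the coercive bilinear form $B_\lambda(\phi,\psi) := \lambda\la\phi,\psi\ra + \la a D\phi, D\psi\ra$ on $L^2(\Omega)$, with continuous functional $\psi \mapsto -\la a\xi, D\psi\ra$; coercivity is ensured by the uniform ellipticity $a \ge C^{-1}I_d$. Testing \eqref{mascor} against $\phi_{\lambda,\xi}$ itself yields the basic energy estimate
\[
\lambda\|\phi_{\lambda,\xi}\|_2^2 + C^{-1}\|D\phi_{\lambda,\xi}\|_2^2 \le C|\xi|^2,
\]
so the gradient is already controlled uniformly in $\lambda$, while the naive bound on $\|\phi_{\lambda,\xi}\|_2$ only gives the useless order $\lambda^{-1/2}$. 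Upgrading this to a uniform-in-$\lambda$ $L^2$ estimate is the crux of the proof and is where the hypothesis $d \ge 3$ enters essentially.

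For that step I would exploit the product Gaussian structure via the Gaussian Poincar\'e inequality
\[
\var(\phi_{\lambda,\xi}) \le \sum_{e\in\B} \E\{|\partial_e\phi_{\lambda,\xi}|^2\}.
\]
Differentiating \eqref{mascor} in $\zeta_e$, and using $\partial_e a_{e'} = \eta'(\zeta_e)\delta_{ee'}$, shows that $\partial_e\phi_{\lambda,\xi}$ solves a linear equation on $\Omega$ whose source is localized at the edge $e$ with magnitude at most $C|D\phi_{\lambda,\xi}+\xi|(e)$. Translating back to the physical lattice through the correspondence $T_x$, one represents $\partial_e\phi_{\lambda,\xi}$ pointwise in terms of the quenched Green function $G_\lambda^\omega$ of $\lambda + \nabla^*\tilde a\nabla$ on $\Z^d$, with a gradient factor $|\nabla G_\lambda^\omega(0,\underline e)|$. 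Inserting the annealed decay $\E\{|\nabla G_\lambda^\omega(0,x)|^2\} \les |x|_*^{2(1-d)}$ of \cite{marahrens2013annealed} and summing in $e$ reduces the uniform bound to $\sum_{x\in\Z^d}|x|_*^{2(1-d)} < \infty$, which holds \emph{precisely} when $d \ge 3$; combined with $\E\{\phi_{\lambda,\xi}\} = 0$ (imposed by subtracting the mean, which is finite because $\lambda > 0$), this gives $\|\phi_{\lambda,\xi}\|_2 \les |\xi|$ uniformly in $\lambda$. Higher moments are obtained by iterating this representation together with the Gaussian logarithmic Sobolev inequality (or equivalently the $L^p$-Meyers-type scheme of \cite{gloria2011optimal}), and the uniform bound on $\|D\phi_{\lambda,\xi}\|_p$ follows by applying the same argument to the differentiated equation.

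To pass to $\lambda \to 0$, the uniform $L^p$ bounds on $\phi_{\lambda,\xi}$ and $D\phi_{\lambda,\xi}$ allow extraction of weakly convergent subsequences, and taking the limit in \eqref{mascor} produces a centered stationary $\phi_\xi$ solving \eqref{cor}. Such a solution is unique because its gradient is the unique minimizer in the closed subspace $L^2_{\mathrm{pot}} := \overline{\{D\psi : \psi \in L^2(\Omega)\}} \subset L^2(\Omega;\R^d)$ of the strictly convex functional $g \mapsto \E\{(g+\xi)^T a (g+\xi)\}$, and the centered stationary $\phi_\xi$ with prescribed gradient is itself unique. Uniqueness of the limit, together with the uniform $L^p$ bounds, promotes the weak subsequential convergence to strong $L^p$-convergence of the full family $\phi_{\lambda,\xi} \to \phi_\xi$ as $\lambda \to 0$. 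The main obstacle throughout is the uniform-in-$\lambda$ $L^2$ bound on $\phi_{\lambda,\xi}$: the energy identity alone is insufficient, and transience of the simple random walk (i.e., $d \ge 3$), applied through annealed quenched Green function estimates, must be used essentially.
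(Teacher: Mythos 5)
The paper does not prove this proposition at all: it is quoted verbatim from \cite{gloria2011optimal}, so the only comparison available is with the known Gloria--Otto argument, whose broad strategy (spectral gap/log-Sobolev for the Gaussian ensemble plus annealed Green function decay, with $d\ge 3$ entering through the summability of $|x|_*^{2(1-d)}$) your outline does capture. Within that outline, however, there are two genuine gaps. First, the key step is circular as you have ordered it. After differentiating \eqref{mascor} you get $\partial_e\tilde\phi_{\lambda,\xi}(0)=-\eta'(\zeta_e)\,\nabla G_\lambda(0,e)\,(\nabla\tilde\phi_{\lambda,\xi}+\xi)(e)$, so the spectral-gap bound for $\var(\phi_{\lambda,\xi})$ involves $\E\{|\nabla G_\lambda(0,e)|^2|(\nabla\tilde\phi_{\lambda,\xi}+\xi)(e)|^2\}$, a product of two random factors. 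To decouple it (H\"older) you need moments of $D\phi_{\lambda,\xi}$ strictly above $2$, uniformly in $\lambda$, which the energy estimate does not give; and your proposed remedy --- ``apply the same argument to the differentiated equation'' --- runs into exactly the same product (now with $\nabla\nabla G_\lambda$) and again requires higher moments of $D\phi_{\lambda,\xi}$. In \cite{gloria2011optimal} the uniform bounds on $\E\{|D\phi_{\lambda,\xi}|^p\}$ for all $p$ are established \emph{first}, by a separate and substantial buckling argument (log-Sobolev combined with Caccioppoli/Meyers higher integrability), and only then are the moments of $\phi_{\lambda,\xi}$ itself bounded by the Green-function argument you describe. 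Deferring to that scheme in a parenthesis is fine as a citation, but as a proof your write-up inverts the logical order and leaves the decoupling unjustified.

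Second, the passage to the limit is not correct as stated: weak subsequential convergence plus uniqueness of the limit and uniform $L^p$ bounds gives weak convergence of the full family, but it does \emph{not} ``promote'' to strong $L^p$ convergence --- weak convergence together with uniqueness of the limit point never implies norm convergence by itself. The strong $L^2$ convergence of $\phi_{\lambda,\xi}$ requires an additional argument, e.g.\ spectral calculus for the generator $D^*aD$ (writing $\phi_{\lambda,\xi}=(\lambda+D^*aD)^{-1}(-D^*a\xi)$, the uniform $L^2$ bound yields the integrability of $\mu^{-2}$ against the relevant spectral measure and then dominated convergence gives convergence), or a direct Cauchy-in-$L^2$ estimate for $\phi_{\lambda,\xi}-\phi_{\mu,\xi}$; strong $L^p$ convergence then follows by interpolation with the uniform higher-moment bounds. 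A small additional remark: $\E\{\phi_{\lambda,\xi}\}=0$ is automatic from \eqref{mascor} (take expectations and use that the expected divergence of a stationary field vanishes), so no mean subtraction is needed or allowed.
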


Denote by $G_{\lambda}(x,y)$ the Green function of $\lambda+\nabla^*\tilde{a}(x)\nabla$ (the dependence on the randomness $\zeta$ is kept implicit) and recall that $G(x,y)=G_0(x,y)$. The following pointwise bound holds:
\begin{equation*}
G_\lambda(x,y)\leq \frac{C}{|x|_*^{d-2}}e^{-c\sqrt{\lambda}|x|}
\end{equation*}
for some $c,C>0$. The following result controls the derivatives in the annealed sense.

\begin{prop}[annealed estimates on the gradients of the Green function \cite{marahrens2013annealed}]
For every $1\le p <\infty$, there exists $C_p<\infty$ such that for every $\lambda\geq 0$  and every $e,e' \in \mathbb{B}$,
$$
\|\nabla G_\lambda(0,e) \|_p\le \frac{C_p}{|\underline{e}|_*^{d-1}},
$$
$$
\| \nabla \nabla G_\lambda(e,e') \|_p\le \frac{C_p}{|\underline{e'}-\underline{e}|_*^{d}}.
$$
\label{p:esGr}
\end{prop}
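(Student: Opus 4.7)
My plan follows the approach of Marahrens--Otto: combine the deterministic Nash--Aronson bound $|G_\lambda(x,y)| \lesssim |x-y|_*^{-(d-2)}$ with a Gaussian functional inequality on $\Omega$ (spectral gap / log-Sobolev) and a sensitivity formula expressing $\partial_e G_\lambda$ as a product of two Green function gradients.

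First I would derive the sensitivity identity by differentiating $(\lambda + \nabla^* \tilde{a}\nabla)G_\lambda(\cdot, y) = \delta_y$ in $\zeta_e$: for $e = (\underline{e}, \underline{e}+e_i)$,
\[
\partial_e G_\lambda(x, y) = -\eta'(\zeta_e)\, \nabla_i G_\lambda(x, \underline{e})\, \nabla_i G_\lambda(\underline{e}, y),
\]
with an analogous identity for $\partial_e \nabla G_\lambda$ obtained by differentiating once more in the first argument. Together with the boundedness of $\eta'$, this reduces bounding sensitivities to bounding products of Green function gradients.

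Next, for the deterministic input I would run the discrete Caccioppoli inequality on dyadic annuli of radius $R$ around $\underline{e}$, combined with the pointwise decay of $G_\lambda$ itself, to obtain the $\ell^2$-bound $\sum_{|\underline{e'}-\underline{e}|\le R} |\nabla G_\lambda(0, e')|^2 \lesssim R^{-(d-2)}$. This controls the square function feeding the Gaussian moment inequality
\[
\|F - \E F\|_p \le C\sqrt{p}\, \left\| \left(\sum_{e} |\partial_e F|^2\right)^{1/2} \right\|_p,
\]
valid on $(\Omega,\P)$. Applying this to $F = \nabla G_\lambda(0, e)$ and substituting the sensitivity formula, the square function factorizes into products of two gradient terms; summing by Cauchy--Schwarz against the deterministic annular estimate yields the claimed decay $|\underline{e}|_*^{-(d-1)}$. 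The mean $\E F$ is handled by a separate duality argument, pairing against a test function and using the pointwise Green function bound together with stationarity.

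The hard part is that the sensitivity of $\nabla G_\lambda$ involves $\nabla\nabla G_\lambda$, and vice versa, so the two bounds of the proposition must be established jointly rather than in sequence. I would close this loop by a bootstrap on the moments: starting from a crude $L^2$-estimate obtained from plain energy bounds, iterate the Gaussian moment inequality, upgrading the stochastic integrability at each step while the dyadic-annuli decomposition preserves the correct spatial decay exponent. Uniformity in $\lambda \ge 0$ follows because both the Caccioppoli and Nash--Aronson inputs are stable as $\lambda \to 0$, provided $d \ge 3$ as assumed.
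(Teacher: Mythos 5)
You should first be aware that the paper does not prove this proposition at all: it is quoted verbatim from Marahrens--Otto \cite{marahrens2013annealed} and used as a black box, so there is no internal proof to match. Your sketch does follow the general strategy of that cited work (the sensitivity identity, which is the paper's Lemma~\ref{l:vd}; the Gaussian log-Sobolev/moment inequality for the i.i.d.\ environment; Caccioppoli on dyadic annuli), so the ingredients are the right ones. But as a proof plan it has a genuine gap exactly where the real difficulty sits. When you apply the moment inequality to $F=\nabla G_\lambda(0,e)$, the square function is $\bigl(\sum_{e'}|\nabla\nabla G_\lambda(e,e')|^2\,|\nabla G_\lambda(0,e')|^2\bigr)^{1/2}$, and ``summing by Cauchy--Schwarz against the deterministic annular estimate'' does not close it: neither factor is pointwise controlled quenched, Cauchy--Schwarz inside the sum produces $\ell^4$-sums of gradients which Caccioppoli (an $\ell^2$ estimate) does not control, and the pointwise annealed bound on $\nabla\nabla G_\lambda$ is precisely the second assertion you are trying to prove. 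Your ball-centred statement $\sum_{|\underline{e'}-\underline{e}|\le R}|\nabla G_\lambda(0,e')|^2\lesssim R^{-(d-2)}$ is also only correct in the regime $R\sim|\underline{e}|$; for $R\ll|\underline{e}|$ Caccioppoli gives $R^{d-2}|\underline{e}|_*^{-2(d-2)}$.

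The proposed resolution by ``bootstrap on the moments, upgrading the stochastic integrability at each step while the dyadic-annuli decomposition preserves the correct spatial decay exponent'' misidentifies what must be bootstrapped. On the lattice one has for free the deterministic bounds $|\nabla G_\lambda(x,y)|,\,|\nabla\nabla G_\lambda(e,e')|\lesssim |x-y|_*^{2-d}$ (a discrete gradient is a difference of Green functions), i.e.\ the crude starting point already has unlimited stochastic integrability but suboptimal \emph{spatial} decay; the whole content of Marahrens--Otto is the iteration that improves the decay exponent from $d-2$ up to $d-1$ (resp.\ $d$), showing the gain at each step is strictly positive, does not stall below the optimal exponent, and is uniform in $p$ and $\lambda\ge0$, with the near-diagonal and far-field regimes treated separately. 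None of this is addressed in your sketch, and the claim that the mean $\E\{\nabla G_\lambda(0,e)\}$ is handled ``by duality and stationarity'' is unsubstantiated --- stationarity alone says nothing about the decay of $\E\{\nabla G_\lambda(0,e)\}$ in $\underline{e}$; in the cited work the low moments are part of the same induction rather than a separate soft argument. In short: right toolbox, but the central quantitative step is missing, which is exactly why the present paper imports the result from \cite{marahrens2013annealed} rather than reproving it.
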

\begin{rem}
Notice that $\nabla G(x,e)$ (for $x \in \Z^d$ and $e \in \mathbb{B}$) denotes the gradient of $G(x,\cdot)$ evaluated at the edge $e$. Similarly, $\nabla \nabla G(e,e')$ denotes the gradient of $\nabla G(\cdot,e')$ evaluated at the edge $e$.
\end{rem}

\subsection{Notation}
We summarize and introduce some more notations used throughout the paper.
\begin{itemize}
\item For $i=1,\ldots,d$, $e\in\B$ and $\tilde{f}$, $\nabla_i \tilde{f}(e):=\nabla_i\tilde{f}(\underline{e})$. Recall that without any subscript, $\nabla \tilde{f}(e)=\tilde{f}(\bar{e})-\tilde{f}(\underline{e})$, and for $x\in \Z^d$, $\nabla_i\tilde{f}(x)=f(x+e_i)-f(x)$ and $\nabla_i^*\tilde{f}(x)=f(x-e_i)-f(x)$.
\item We write $a\les b$ when $a\leq Cb$ for some constant $C$ independent of $\eps,e,x$.
\item For $a,b,c>0$, we write $a\les \frac{1}{b^{c-}}$ if for any $\delta>0$, there exists $C_\delta>0$ such that $a\leq C_\delta \frac{1}{b^{c-\delta}}$. In this way we have 
\begin{equation*}
\frac{\log |x|_*}{|x|_*^c}\les \frac{1}{|x|_*^{c-}}.
\end{equation*}
\item The Laplacian on $\Z^d$ and the \emph{horizontal} Laplacian on the probability space are both denoted by $\Delta=-\nabla^*\nabla$ and $\Delta=-D^*D$.
\item For a random environment $\zeta$ and edge $e\in \B$, we obtain the environment perturbed at $e$ by replacing $\zeta_e$ with an independent copy $\zeta_e'$ without changing other components $(\zeta_{e'})_{e'\neq e}$. The resulting new environment is denoted by $\zeta^e$.
\item For a random variable $f$ and an edge $e\in\B$, the variable perturbed at $e$ is denoted by $f^e(\zeta):=f(\zeta^e)$. For a stationary random field $\tilde{f}(x)=f(\tau_x\zeta)$, the field perturbed at $e$ is denoted by $\tilde{f}^e(x):=f(\tau_x\zeta^e)$.
\item The discrete homogenized Green function of $\lambda+\nabla^*a_\h\nabla$ is denoted by $G_{\h,\lambda}(x,y)$ for $\lambda\geq 0$, and $G_\h(x,y)=G_{\h,0}(x,y)$. Recall that the heterogeneous Green function of $\lambda+\nabla^*\tilde{a}\nabla$ is denoted by $G_\lambda(x,y)$, and that $G(x,y)=G_0(x,y)$. The continuous homogenized Green function of $-\nabla\cdot a_\h\nabla$ is $\cG_\h(x,y)$.
\item $\{e_i,i=1,\ldots,d\}$ represents the canonical basis of $\Z^d$, the corresponding edges, and the column vectors so that the identity matrix $I_d=[e_1,\ldots,e_d]$.
\item For functions of two variables, e.g., $G(x,y)$ with $x,y\in \Z^d$, we use $\nabla_{x,i},\nabla_{y,i}$ to denote the derivative with respect to $x_i,y_i$ respectively.
\item The arrow $\Rightarrow$ stands for convergence in law, and $N(0,\sigma^2)$ is the Gaussian law with mean $0$ and variance $\sigma^2$.
\item $a\vee b=\max(a,b)$ and $a\wedge b=\min(a,b)$.
\end{itemize}

\section{Helffer-Sj\"ostrand representation, two-scale expansion of the Green function, and second order Poincar\'e inequality}
\label{s:igd}

We divide the proof of Proposition~\ref{p:conG} into two steps. First, we show that 
\begin{equation}
\eps^{-d}\var\{\U_\eps^{(f)}(g)\}\to \sigma_g^2,
\label{conV}
\end{equation}
with $\sigma_g^2$ defined in \eqref{mainVar}. If $\sigma_g^2=0$, we conclude $\mathscr{U}_\eps^{(f)}(g)\to 0$ in $L^2(\Omega)$. Next we assume $\sigma_g^2>0$ and show  
\begin{equation}
\frac{\U_\eps^{(f)}(g)-\E\{\U_\eps^{(f)}(g)\}}{\sqrt{\var\{\U_\eps^{(f)}(g)\}}}\Rightarrow N(0,1).
\label{conN}
\end{equation}
Once this is done, we can write
\begin{equation*}
\mathscr{U}_\eps^{(f)}(g)=\frac{\U_\eps^{(f)}(g)-\E\{\U_\eps^{(f)}(g)\}}{\sqrt{\var\{\U_\eps^{(f)}(g)\}}}\times \sqrt{\eps^{-d}\var\{\U_\eps^{(f)}(g)\}}
\end{equation*}
to conclude that $\mathscr{U}_\eps^{(f)}(g)\Rightarrow N(0,\sigma_g^2)$.

The proof of \eqref{conV} uses the Helffer-Sj\"ostrand representation and a two-scale expansion of the Green function, while the proof of \eqref{conN} relies the second order Poincar\'e inequality developed by Chatterjee \cite{chatterjee2009fluctuations} and revisited in \cite{MN}. Both of them require taking vertical derivatives of $\U_\eps^{(f)}(g)$ with respect to the underlying Gaussian variables $\zeta_e$. Recall that $\U_\eps^{(f)}(g)=\eps^{d+2}\sum_{x,y\in\Z^d}G(x,y)g(\eps x)f(\eps y)$ is a finite linear combination of $G(x,y)$. By Lemma~\ref{l:vd}, we have
\begin{equation*}
\partial_e \U_\eps^{(f)}(g)=-\eps^{d+2}\sum_{x,y\in\Z^d}\partial_e a_e\nabla G(x,e)\nabla G(y,e)g(\eps x)f(\eps y).
\end{equation*}

We introduce the key elements in proving \eqref{conV} and \eqref{conN} in the following section.

\subsection{Helffer-Sj\"ostrand representation and a two-scale expansion of the Green function}

\begin{prop}[Helffer-Sj\"ostrand representation \cite{mourrat2014correlation}]
Let $f,g:\Omega\to \R$ be centered square-integrable functions such that for every $e\in \mathbb{B}$, $\partial_e f,\partial_e g\in L^2(\Omega)$. We have
\begin{equation*}
\langle f,g\rangle=\sum_{e\in \mathbb{B}}\langle \partial_e f,(1+\L)^{-1}\partial_e g\rangle.
\end{equation*}
Moreover, for every $p\geq 2$, $(1+\L)^{-1}$ is a contraction from $L^p(\Omega)$ to $L^p(\Omega)$.
\label{p:HSre}
\end{prop}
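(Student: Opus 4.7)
The plan is to use the Wiener chaos decomposition of $L^2(\Omega)$, combined with the Ornstein--Uhlenbeck semigroup $P_t=e^{-t\L}$, which will represent $(1+\L)^{-1}$ via a Laplace transform. Since $\zeta=(\zeta_e)_{e\in\B}$ is an i.i.d.\ family of standard Gaussians, $L^2(\Omega)$ decomposes orthogonally as $\bigoplus_{n\ge 0} H_n$ into Wiener chaoses, spanned by products of Hermite polynomials in the $\zeta_e$'s of total degree $n$. Working first on the dense subspace of cylinder functions (depending only on finitely many coordinates) where everything is elementary, a direct computation using $\partial_e^*=-\partial_e+\zeta_e$ shows that $\partial_e$ maps $H_n$ into $H_{n-1}$ and $\partial_e^*$ maps $H_n$ into $H_{n+1}$; in particular $\L=\partial^*\partial$ acts on $H_n$ as multiplication by the number operator eigenvalue $n$, and $(1+\L)^{-1}$ acts on $H_n$ as multiplication by $1/(n+1)$.

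For the covariance identity, I would decompose $f=\sum_{n\ge 1}f_n$ and $g=\sum_{n\ge 1}g_n$ with $f_n,g_n\in H_n$, where the $n=0$ term drops out by centeredness. Orthogonality of distinct chaoses combined with the degree-lowering property of $\partial_e$ give, formally,
\[
\sum_{e\in\B}\langle \partial_e f,(1+\L)^{-1}\partial_e g\rangle=\sum_{n\ge 1}\frac{1}{n}\sum_{e\in\B}\langle \partial_e f_n,\partial_e g_n\rangle=\sum_{n\ge 1}\frac{1}{n}\langle \partial^*\partial f_n,g_n\rangle=\sum_{n\ge 1}\langle f_n,g_n\rangle=\langle f,g\rangle.
\]
The remaining work is to justify the Fubini interchange, which reduces to verifying $\sum_e\|\partial_e f\|_2^2<\infty$ and similarly for $g$; this can be argued via approximation by cylinder functions and closure in the graph norm $\|\cdot\|_2^2+\sum_e\|\partial_e\cdot\|_2^2$.

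For the $L^p$-contraction claim, the key is the Laplace-transform representation
\[
(1+\L)^{-1}=\int_0^\infty e^{-t}\,P_t\,\d t,
\]
valid chaos-by-chaos since $\int_0^\infty e^{-t-nt}\,\d t=(n+1)^{-1}$. The Ornstein--Uhlenbeck semigroup acts coordinate-wise via Mehler's formula, making it a Markov semigroup on $(\Omega,\P)$ and hence a contraction on every $L^p(\Omega)$; Minkowski's integral inequality then yields $\|(1+\L)^{-1}f\|_p\le\int_0^\infty e^{-t}\|P_tf\|_p\,\d t\le\|f\|_p$. The main technical obstacle I expect is the rigorous handling of the infinite-dimensional chaos decomposition on $\Omega=\R^\B$, since $\L$ is only densely defined and one must carefully track the domains when moving between the spectral and semigroup representations.
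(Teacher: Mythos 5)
The paper itself offers no proof of this proposition: it is imported as a black box from \cite{mourrat2014correlation}, so there is no in-text argument to compare yours against. Your chaos-decomposition proof is correct and is essentially the standard spectral proof of the Gaussian Helffer--Sj\"ostrand representation: $\L$ is the number operator, $\partial_e$ lowers the chaos degree by one, cross-chaos terms vanish by orthogonality, and since $\partial_e g_n\in H_{n-1}$ the resolvent produces exactly the factor $1/n$ that cancels $\sum_e\langle\partial_e f_n,\partial_e g_n\rangle=\langle f_n,\L g_n\rangle=n\langle f_n,g_n\rangle$; likewise the Laplace-transform representation $(1+\L)^{-1}=\int_0^\infty e^{-t}P_t\,\d t$ together with the Mehler/Ornstein--Uhlenbeck semigroup being Markov gives the $L^p$ contraction. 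Two remarks. First, your reduction of the Fubini step to $\sum_e\|\partial_e f\|_2^2<\infty$ asks for more than the hypothesis grants (only $\partial_e f\in L^2(\Omega)$ edge by edge is assumed, and $\sum_e\|\partial_e f\|_2^2=\|\L^{1/2}f\|_2^2$ can be infinite for $f\in L^2$); the fix is that the resolvent itself supplies the summability: by Cauchy--Schwarz, $|\langle\partial_e f,(1+\L)^{-1}\partial_e g\rangle|\le\|(1+\L)^{-1/2}\partial_e f\|_2\,\|(1+\L)^{-1/2}\partial_e g\|_2$, and on the $(n-1)$-th chaos the weight $1/n$ exactly compensates $\sum_e\|\partial_e f_n\|_2^2=n\|f_n\|_2^2$, so that $\sum_e\|(1+\L)^{-1/2}\partial_e f\|_2^2=\sum_{n\ge1}\|f_n\|_2^2=\|f\|_2^2$ and the double sum over $e$ and $n$ converges absolutely with no extra assumption. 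Second, an equivalent and slightly shorter route, closer in spirit to the cited reference, avoids chaos expansions: the commutation relation $\partial_e\L=(1+\L)\partial_e$ and the fact that $\L\ge 1$ on centered functions give $\langle f,g\rangle=\langle\L\,\L^{-1}f,g\rangle=\sum_e\langle\partial_e\L^{-1}f,\partial_e g\rangle=\sum_e\langle(1+\L)^{-1}\partial_e f,\partial_e g\rangle$. Either way, your argument establishes the statement.
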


Since $$\U_\eps^{(f)}(g)=\eps^{d+2}\sum_{x,y\in \Z^d}G(x,y)g(\eps x)f(\eps y),$$ the proof of \eqref{conV} is reduced to asymptotics of $\mathrm{Cov}\{G(x,y),G(z,w)\}$ when the mutual distances between $x,y,z$ and $w$ are large. By applying Proposition~\ref{p:HSre} and Lemma~\ref{l:vd}, the covariance is given by
\begin{equation*}
\begin{aligned}
\mathrm{Cov}\{G(x,y),G(z,w)\}=&\sum_{e\in \mathbb{B}}\langle \partial_e G(x,y),(1+\L)^{-1}\partial_e G(z,w)\rangle\\
=&\sum_{e\in \mathbb{B}}\langle \partial_e a_e\nabla G(x,e)\nabla G(y,e),(1+\L)^{-1}\partial_e a_e \nabla G(z,e)\nabla G(w,e)\rangle.
\end{aligned}
\end{equation*}
To prove the asymptotics, we need an expansion of $\nabla G(x,e)$. 
The following proposition is our main result in this section and one of the main ingredients to prove~\eqref{conV}.

\begin{prop}
Recall that $G$ and $G_\h$ are the Green functions of $\nabla^*\tilde{a}\nabla$ and $\nabla^*a_\h\nabla$ respectively. For any $e\in \B$, we have
\begin{equation*}
\|\nabla G(0,e)-\nabla G_\h(e)-\sum_{k=1}^d \nabla_k G_\h(e)\nabla\tilde{\phi}_k(e)\|_2\les\frac{\log |\underline{e}|_*}{|\underline{e}|_*^d}.
\label{2scaleGr}
\end{equation*}
\label{p:2scaleGr}
\end{prop}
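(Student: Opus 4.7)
My strategy is to introduce the discrete two-scale expansion
\begin{equation*}
v(x) := G_\h(x) + \sum_{k=1}^d \tilde\phi_k(x)\,\nabla_k G_\h(x),
\end{equation*}
compute the residual $\nabla^*\tilde a\nabla v - \delta_0$ and cast it into divergence form $-\nabla^* h$ via the skew-symmetric flux corrector $\sigma_{ijk}$ of Lemma~\ref{l:geneCor}, and then invert by the heterogeneous Green function and apply the annealed estimates of Proposition~\ref{p:esGr} to get the $L^2$ bound on $\nabla(G(0,\cdot)-v)(e)$.

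The central algebraic input is putting the residual into divergence form with a well-behaved integrand. Expanding $\tilde a\nabla v$ by the discrete Leibniz rule, using the corrector equation $\nabla^*\tilde a(e_k+\nabla\tilde\phi_k)=0$ and the homogenized equation $-\nabla^*a_\h\nabla G_\h=\delta_0$, the residual $\delta_0 - \nabla^*\tilde a\nabla v$ collapses (up to a localized correction at $0$) to a sum of contractions of $\nabla^2 G_\h$ with the flux $q_k:=\tilde a(e_k+\nabla\tilde\phi_k)$ and with $\tilde a\,\tilde\phi_k$. Writing $(q_k-a_\h e_k)_j=\sum_i\nabla_i^*\sigma_{ijk}$ from Lemma~\ref{l:geneCor} and exploiting the antisymmetry $\sigma_{ijk}=-\sigma_{ikj}$ to integrate by parts in the contraction with $\nabla^2 G_\h$ (the symmetry of $\nabla^2 G_\h$ kills the bulk $\nabla^3 G_\h$ term that would otherwise appear), the residual becomes a pure divergence $-\nabla^* h$ with $|h(y)|\lesssim(|\tilde\phi(y)|+|\sigma(y)|)\,|\nabla^2 G_\h(y)|$. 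Proposition~\ref{p:esCo}, the $L^p$ moment bounds on $\sigma_{ijk}$ of Lemma~\ref{l:geneCor}, and the deterministic decay $|\nabla^2 G_\h(y)|\lesssim|y|_*^{-d}$ together yield $\|h(y)\|_p\lesssim|y|_*^{-d}$ uniformly in $p\geq 1$.

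Once this is in place, representing $G(0,\cdot)-v=-\sum_y G(\cdot,y)\,\nabla^*h(y)$ and integrating by parts in $y$ gives
\begin{equation*}
\nabla(G(0,\cdot)-v)(e) = -\sum_{y\in\Z^d}\nabla_x\nabla_y G(e,y)\cdot h(y),
\end{equation*}
so that the triangle inequality in $L^2$ combined with Cauchy-Schwarz in $\Omega$ and the annealed bound $\|\nabla\nabla G(e,e')\|_4\lesssim|\underline{e'}-\underline e|_*^{-d}$ of Proposition~\ref{p:esGr} produces
\begin{equation*}
\|\nabla(G(0,\cdot)-v)(e)\|_2 \lesssim \sum_{y\in\Z^d}\frac{1}{|y-\underline e|_*^d}\cdot\frac{1}{|y|_*^d}\lesssim \frac{\log|\underline e|_*}{|\underline e|_*^d},
\end{equation*}
the logarithm being the standard pairing of two $|\cdot|^{-d}$ kernels in $d$ dimensions. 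Finally, by the discrete product rule,
\begin{equation*}
\nabla v(e) = \nabla G_\h(e) + \sum_{k=1}^d \nabla_k G_\h(e)\,\nabla\tilde\phi_k(e) + \sum_{k=1}^d \tilde\phi_k(\bar e)\,\nabla\nabla_k G_\h(e),
\end{equation*}
and the last sum is already of order $|\underline e|_*^{-d}$ in $L^2$ since $\|\tilde\phi_k\|_2\lesssim 1$ by Proposition~\ref{p:esCo} and $|\nabla\nabla_k G_\h(e)|\lesssim|\underline e|_*^{-d}$ deterministically; combining these displays yields the claim. The main obstacle is thus the casting of the residual into divergence form with a pointwise $L^p$-bounded integrand, which crucially relies on the existence and moment bounds for the skew-symmetric flux corrector $\sigma_{ijk}$ of Lemma~\ref{l:geneCor}; once that input is granted, the remainder is the convolution bookkeeping above.
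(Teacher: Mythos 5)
Your plan is correct and follows essentially the same route as the paper: the paper also works with the two-scale remainder $z(x)=G(0,x)-G_\h(x)-\sum_k\nabla_kG_\h(x)\tilde\phi_k(x)$, uses the skew-symmetric flux corrector $\sigma_{ijk}$ of Lemma~\ref{l:geneCor} to put the residual in divergence form (quoting the Green-function representation of $z$ from earlier work rather than rederiving it), integrates by parts, and concludes via the annealed bound on $\nabla\nabla G$ from Proposition~\ref{p:esGr} together with the convolution estimate, handling the extra product-rule term $\tilde\phi_k(\bar e)\nabla\nabla_kG_\h(e)$ separately exactly as you do. The only small caveat is that Lemma~\ref{l:geneCor} provides $\sigma_{ijk}\in L^4(\Omega)$ rather than all moments, but your Cauchy--Schwarz step only uses fourth moments, so this is harmless.
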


An immediate consequence is that for any $X\in L^2(\Omega)$,
\begin{equation}
|\langle X, \nabla G(0,e)\rangle-\sum_{k=1}^d \nabla_k G_\h(e)\langle X, (e_k+\nabla\tilde{\phi}_k)(e)\rangle|\les \|X\|_2\frac{\log |\underline{e}|_*}{|\underline{e}|_*^d}.
\label{2scaleip}
\end{equation}
By translation invariance of the environment, we further obtain for any $x\in\Z^d$
\begin{equation}
|\langle X, \nabla G(x,e)\rangle-\sum_{k=1}^d \nabla_k G_\h(e-x)\langle X, (e_k+\nabla\tilde{\phi}_k)(e)\rangle|\les \|X\|_2\frac{\log |\underline{e}-x|_*}{|\underline{e}-x|_*^d}.
\label{2scalew}
\end{equation}

\begin{rem}
\eqref{2scaleip} is an improvement of \cite[Theorem 5.1]{mourrat2014correlation}.
\end{rem}

To prove Proposition~\ref{p:2scaleGr}, we introduce the \emph{flux corrector}, following \cite{gloria2014regularity}. For every $i=1,\ldots,d$, define $q_i=a(e_i+D\phi_i)-a_\h e_i$, which describes the current correction, and $q_{ij}$ to be its $j-$th component. By the corrector equation \eqref{cor}, we have $D^*q_i=0$, and by the expression of $a_\h$ in \eqref{effcst}, $\E\{q_{ij}\}=0$.

We need the following integrability property.

\begin{lem}
Fix any $i,j,k=1,\ldots,d$. For $\lambda>0$, let $\sigma_\lambda$ solve 
\begin{equation*}
(\lambda-\Delta)\sigma_\lambda=D_kq_{ij},
\end{equation*}
then $\sigma_\lambda$ is bounded in $L^4(\Omega)$ uniformly in $\lambda,i,j,k$. Furthermore, $\sigma_\lambda$ converges in $L^2(\Omega)$ with the limit $\sigma\in L^4(\Omega)$ and solving
\begin{equation*}
-\Delta \sigma=D_kq_{ij}.
\end{equation*}
\label{l:L4bd}
\end{lem}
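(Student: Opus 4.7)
The plan is to exploit the stationary structure on $\Omega$ to reduce the equation to a PDE on $\Z^d$ with a random source, and then combine the Helffer--Sj\"ostrand machinery of Proposition~\ref{p:HSre} with the annealed Green function estimates of Proposition~\ref{p:esGr}. Since $\E\{D_kq_{ij}\}=0$ by stationarity, $\sigma_\lambda$ is centered; extending it to the stationary field $\tilde\sigma_\lambda(x,\zeta):=\sigma_\lambda(\tau_x\zeta)$, the identification $\Delta F=\Delta_{\Z^d}\tilde F(0)$ turns the defining equation into $(\lambda+\nabla^*\nabla)\tilde\sigma_\lambda(x)=\nabla_k\tilde q_{ij}(x)$ on $\Z^d$, solved by $\tilde\sigma_\lambda(x)=\sum_{y\in\Z^d}H^{(\lambda)}(x-y)\nabla_k\tilde q_{ij}(y)$, with $H^{(\lambda)}$ the Green function of $\lambda+\nabla^*\nabla$.

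\emph{Uniform $L^2$ bound.} Proposition~\ref{p:HSre} and the $L^2$-contraction of $(1+\L)^{-1}$ give $\|\sigma_\lambda\|_2^2\leq\sum_{e\in\B}\|\partial_e\sigma_\lambda\|_2^2$. Differentiating the corrector equation in $\zeta_e$ yields the explicit formula
\[
\partial_e\tilde\phi_i(x)=-\eta'(\zeta_e)(e_i+\nabla\tilde\phi_i)(e)\,\nabla G(x,e),
\]
from which a direct expansion of $\tilde q_{ij}$ together with Proposition~\ref{p:esGr} (applied to $\|\nabla\nabla G(\cdot,e)\|_p$) gives
\[
\|\partial_e\tilde q_{ij}(y)\|_2\les \mathbf 1_{y=\underline e}+|y-\underline e|_*^{-d}.
\]
Combined with $|\nabla H^{(\lambda)}(z)|\les |z|_*^{-(d-1)}$ and a standard convolution estimate on $\Z^d$, this yields $\|\partial_e\sigma_\lambda\|_2\les |\underline e|_*^{-(d-1)}$ up to logarithms, uniformly in $\lambda$; the sum $\sum_e|\underline e|_*^{-2(d-1)}$ converges since $2(d-1)>d$ for $d\ge 3$.

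\emph{Uniform $L^4$ bound.} Iterate the scheme at second order: differentiating the formula above once more gives an explicit expression for $\partial_{e'}\partial_e\phi_i$ as a product of two Green-function gradients, bounded via Proposition~\ref{p:esGr}. Combine with a Gaussian higher-order concentration inequality (obtained by iterating Helffer--Sj\"ostrand, schematically $\|F\|_4^2\les\|F\|_2^2+\sum_{e,e'}\|\partial_{e'}\partial_e F\|_2^2$) to conclude $\sup_\lambda\|\sigma_\lambda\|_4<\infty$; the resulting multi-sums converge for the same summability reason as in the previous step. This step is the main obstacle: Helffer--Sj\"ostrand directly yields only $L^2$ variance estimates, so upgrading to $L^4$ requires careful bookkeeping of the two-derivative Green-function cross-terms together with the fact that the annealed bounds of Proposition~\ref{p:esGr} hold for every $p<\infty$.

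\emph{Convergence in $L^2$.} Spectral calculus for the nonnegative operator $-\Delta=D^*D$: writing $E_\mu$ for its spectral resolution,
\[
\|\sigma_\lambda\|_2^2=\int_0^\infty(\lambda+\mu)^{-2}\,d\|E_\mu D_k q_{ij}\|_2^2
\]
is monotone increasing as $\lambda\downarrow 0$. The $L^2$ bound above forces $\int_0^\infty\mu^{-2}\,d\|E_\mu D_k q_{ij}\|_2^2<\infty$, so dominated convergence (with dominating function $\mu^{-2}$) gives $\sigma_\lambda\to\sigma:=(-\Delta)^{-1}D_kq_{ij}$ in $L^2(\Omega)$. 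Fatou transfers the $L^4$ bound to $\sigma$, and passing to the limit in the defining equation yields $-\Delta\sigma=D_kq_{ij}$.
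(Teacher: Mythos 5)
Your reduction to $\Z^d$, your $L^2$ estimate via Helffer--Sj\"ostrand, and your $L^2$-convergence argument by spectral calculus for $-\Delta=D^*D$ are sound (the last step is a legitimate alternative to the paper, which instead proves convergence of $\langle\sigma_{\lambda_1},\sigma_{\lambda_2}\rangle$ using the covariance bound $|\E\{\tilde q_{ij}(0)\tilde q_{ij}(x)\}|\les \log|x|_*/|x|_*^d$ of Lemma~\ref{l:covqij} and dominated convergence). The problem is the uniform $L^4$ bound, which is the heart of the lemma (it is what gets used later, e.g.\ in the proof of Proposition~\ref{p:2scaleGr}), and which you yourself flag as ``the main obstacle'' without resolving it. The inequality you invoke, $\|F\|_4^2\les\|F\|_2^2+\sum_{e,e'}\|\partial_{e'}\partial_e F\|_2^2$, is not among the paper's tools, is not proved, and is in fact false as stated: for $F=e^{t\zeta_1}-\E\{e^{t\zeta_1}\}$ one has $\|F\|_4^2\sim e^{4t^2}$ while $\|F\|_2^2+\sum_{e,e'}\|\partial_{e'}\partial_e F\|_2^2\sim (1+t^4)e^{2t^2}$, so no constant can work. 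Moreover, the correct way to ``iterate'' Helffer--Sj\"ostrand does not produce second derivatives in $L^2$: applying the representation to $F^2$ and using Cauchy--Schwarz gives $\|F\|_4^2\les\|F\|_2^2+\sum_{e}\|\partial_e F\|_4^2$, i.e.\ \emph{first} derivatives measured in $L^4$.

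That first-order $L^4$ route is exactly what the paper uses, in Glauber form: the spectral gap inequality of Lemma~\ref{l:SG}, $\E\{\sigma_\lambda^4\}\les\bigl(\sum_{e\in\B}\sqrt{\E\{|\sigma_\lambda-\sigma_\lambda^e|^4\}}\bigr)^2$, combined with the Green-function representation of $\sigma_\lambda$, the perturbation bound $\E\{|\nabla_j\tilde\phi_i(y)-\nabla_j\tilde\phi_i^e(y)|^p\}\les|y-\underline e|_*^{-pd}$ of Lemma~\ref{l:dge}, and the annealed estimates of Proposition~\ref{p:esGr}; this yields $\sqrt{\E\{|\sigma_\lambda-\sigma_\lambda^e|^4\}}\les|\underline e|_*^{-(2d-2)-}$, summable over $e$ since $d\ge3$. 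Your own $L^2$ computation would go through verbatim at this level if you replaced your schematic second-order inequality by Lemma~\ref{l:SG} (or by $\|F\|_4^2\les\|F\|_2^2+\sum_e\|\partial_e F\|_4^4{}^{1/2}$-type first-order bounds), estimating $\|\partial_e\sigma_\lambda\|_4$ instead of $\|\partial_e\sigma_\lambda\|_2$, since Proposition~\ref{p:esGr} holds for all $p<\infty$. As written, however, the $L^4$ step is a genuine gap. A minor additional point: the explicit formula for $\partial_e\tilde\phi_i$ is asserted without justification for the unregularized corrector; the paper sidesteps this by working with $\phi_{\lambda,i}$ and Glauber differences.
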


\begin{proof}
We first apply the spectral gap inequality in the form given by Lemma~\ref{l:SG} to $\sigma_\lambda$ and obtain
\begin{equation*}
\E\{\sigma_\lambda^4\}\les\left(\sum_{e\in\B} \sqrt{\E\{|\sigma_\lambda-\sigma_\lambda^e|^4\}}\right)^2.
\end{equation*}
Then we compute $\sigma_\lambda-\sigma_\lambda^e$ for fixed $e\in \mathbb{B}$. Let $G_{\Delta,\lambda}$ be the Green function of $\lambda-\Delta$, we have 
\begin{eqnarray*}
\sigma_\lambda=\tilde{\sigma}_\lambda(0)=\sum_{y\in\Z^d} G_{\Delta,\lambda}(0,y)\nabla_k \tilde{q}_{ij}(y),\\
\sigma_\lambda^e=\tilde{\sigma}_\lambda^e(0)=\sum_{y\in\Z^d} G_{\Delta,\lambda}(0,y)\nabla_k \tilde{q}_{ij}^e(y).
\end{eqnarray*}
Since $q_{ij}$ is the $j$-th component of $a(e_i+D\phi_i)-a_\h e_i$ and $a_\h=\bar{a}I_d$, we have $q_{ij}=a_j1_{i=j}+a_jD_j\phi_i-\bar{a}1_{i=j}$, which implies
\begin{equation*}
|\tilde{q}_{ij}(y)-\tilde{q}_{ij}^e(y)|\les 1_{y=\underline{e}}(1+|\nabla_j\tilde{\phi}_i(y)|)+|\nabla_j\tilde{\phi}_i(y)-\nabla_j\tilde{\phi}_i^e(y)|.
\end{equation*}
Now we have
\begin{equation*}
\begin{aligned}
|\sigma_\lambda-\sigma_\lambda^e|\les &\sum_{y\in\Z^d} |\nabla_k^*G_{\Delta,\lambda}(0,y)|\left(1_{y=\underline{e}}(1+|\nabla_j\tilde{\phi}_i(y)|)+|\nabla_j\tilde{\phi}_i(y)-\nabla_j\tilde{\phi}_i^e(y)|\right)\\
=& |\nabla_k^* G_{\Delta,\lambda}(0,\underline{e})|(1+|\nabla_j\tilde{\phi}_i(\underline{e})|)+\sum_{y\in\Z^d} |\nabla_k^*G_{\Delta,\lambda}(0,y)||\nabla_j\tilde{\phi}_i(y)-\nabla_j\tilde{\phi}_i^e(y)|\\
:=& I_1+I_2,
\end{aligned}
\end{equation*}
so $\sqrt{\E\{|\sigma_\lambda-\sigma_\lambda^e|^4\}}\les \sqrt{\E\{|I_1|^4\}}+\sqrt{\E\{|I_2|^4\}}$. The homogeneous Green function satisfies $|\nabla_k^* G_{\Delta,\lambda}(0,x)|\les |x|_*^{1-d}$, thus $\sqrt{\E\{|I_1|^4\}}\les |\underline{e}|_*^{2-2d}$ by Proposition~\ref{p:esCo}. For $I_2$, we write
\begin{equation*}
|I_2|^4=\sum_{y_1,y_2,y_3,y_4\in\Z^d}\prod_{n=1}^4|\nabla_k^*G_{\Delta,\lambda}(0,y_n)||\nabla_j\tilde{\phi}_i(y_n)-\nabla_j\tilde{\phi}_i^e(y_n)|,
\end{equation*}
and by Lemmas~\ref{l:dge} and \ref{l:cvP} we have 
\begin{equation*}
\E\{|I_2|^4\}\les \left(\sum_{y\in\Z^d} \frac{1}{|y|_*^{d-1}}\frac{1}{|y-\underline{e}|_*^d}\right)^4 \les \frac{1}{|\underline{e}|_*^{(4d-4)-}},
\end{equation*}
so $\sqrt{\E\{|I_2|^4\}}\les 1/|\underline{e}|_*^{(2d-2)-}$. In summary, we have $$\E\{\sigma_\lambda^4\}\les \left(\sum_{e\in\B} \frac{1}{|\underline{e}|_*^{(2d-2)-}}\right)^2,$$ and since $d\geq 3$, we conclude $\E\{\sigma_\lambda^4\}\les 1$.

To show the convergence of $\sigma_\lambda$ in $L^2(\Omega)$, we only need to prove that $\langle \sigma_{\lambda_1},\sigma_{\lambda_2}\rangle$ converges as $\lambda_1,\lambda_2\to 0$. By the Green function representation, we have
\begin{equation*}
\langle \sigma_{\lambda_1},\sigma_{\lambda_2}\rangle=\sum_{y_1,y_2\in\Z^d}\nabla_k^*G_{\Delta,\lambda_1}(0,y_1)\nabla_k^*G_{\Delta,\lambda_2}(0,y_2)\E\{\tilde{q}_{ij}(y_1)\tilde{q}_{ij}(y_2)\}.
\end{equation*}
By Lemma~\ref{l:covqij}, $\Ll|\E\{\tilde{q}_{ij}(y_1)\tilde{q}_{ij}(y_2)\} \Rr|\les \frac{1}{|y_1-y_2|_*^{d-}}$. Furthermore $|\nabla_k^* G_{\Delta,\lambda}(0,y)|\les |y|_*^{1-d}$, by the dominated convergence theorem, we have
\begin{equation*}
\langle \sigma_{\lambda_1},\sigma_{\lambda_2}\rangle\to \sum_{y_1,y_2\in\Z^d}\nabla_k^*G_{\Delta,0}(0,y_1)\nabla_k^*G_{\Delta,0}(0,y_2)\E\{\tilde{q}_{ij}(y_1)\tilde{q}_{ij}(y_2)\}.
\end{equation*}
Therefore, $\sigma_{\lambda}$ converges in $L^2(\Omega)$. Its limit $\sigma$ is in $L^4(\Omega)$ by Fatou's lemma. By sending $\lambda\to 0$ in $(\lambda-\Delta)\sigma_\lambda=D_kq_{ij}$, we obtain $-\Delta \sigma=D_kq_{ij}$, and the proof is complete. 
\end{proof}

We can now define the flux corrector $\{\sigma_{ijk},i,j,k=1,\ldots,d\}$:
\begin{lem}
There exists a tensor field $\{\sigma_{ijk},i,j,k=1,\ldots,d\}$ such that 
\begin{itemize}
\item $\sigma_{ijk}=-\sigma_{ikj}$,
\item $\sigma_{ijk}\in L^4(\Omega)$,
\item $-\Delta\sigma_{ijk}=D_kq_{ij}-D_jq_{ik}$ and $\sum_{k=1}^d D_k^* \sigma_{ijk}=q_{ij}$.
\end{itemize}
\label{l:geneCor}
\end{lem}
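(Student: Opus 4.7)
The plan is to obtain $\sigma_{ijk}$ as an antisymmetrization of the scalar potentials provided by Lemma~\ref{l:L4bd}. For each ordered triple $(i,j,k)$, that lemma furnishes $\psi_{ijk} \in L^4(\Omega)$ solving $-\Delta \psi_{ijk} = D_k q_{ij}$. I would then set
$$
\sigma_{ijk} := \psi_{ijk} - \psi_{ikj}.
$$
The antisymmetry $\sigma_{ijk} = -\sigma_{ikj}$ is built in, the $L^4$ bound follows from the triangle inequality, and applying $-\Delta$ to the definition yields
$$
-\Delta \sigma_{ijk} = D_k q_{ij} - D_j q_{ik},
$$
which is the first half of the third bullet.

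The real work is the divergence identity $\sum_{k=1}^d D_k^* \sigma_{ijk} = q_{ij}$. My strategy is to apply $-\Delta$ to both sides and then invoke uniqueness. Using $\sum_k D_k^* D_k = D^*D = -\Delta$, the commutation $D_k^* D_j = D_j D_k^*$, and the fact that the corrector equation \eqref{cor} together with $D_k^*(a_\h e_i) = 0$ gives $D^* q_i = \sum_k D_k^* q_{ik} = 0$, one computes
\begin{align*}
-\Delta \sum_{k=1}^d D_k^* \sigma_{ijk}
&= \sum_{k=1}^d D_k^*\bigl(D_k q_{ij} - D_j q_{ik}\bigr) \\
&= -\Delta q_{ij} - D_j \sum_{k=1}^d D_k^* q_{ik}
= -\Delta q_{ij}.
\end{align*}
Hence $h := \sum_{k=1}^d D_k^* \sigma_{ijk} - q_{ij}$ is stationary, lies in $L^2(\Omega)$, is centered (the first piece by stationarity, since $\E D_k^* \sigma_{ijk} = 0$, and the second by \eqref{effcst}), and satisfies $-\Delta h = 0$. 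Testing against $h$ gives $\|Dh\|_2^2 = 0$, so $h$ is $\tau_x$-invariant for every $x \in \Z^d$; ergodicity of the i.i.d.\ product measure then collapses $h$ to a constant, which centeredness kills.

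The main subtlety is the uniqueness step, namely enforcing the divergence identity \emph{exactly} rather than merely modulo a stationary harmonic function; the ergodicity argument above handles it cleanly. If one preferred to avoid invoking ergodicity directly, the same identity can be proved at the level of the massive regularizations $\sigma_{ijk}^\lambda$ from Lemma~\ref{l:L4bd}, where $\lambda - \Delta$ is genuinely invertible on $L^2(\Omega)$, and then passed to the limit $\lambda \to 0$ using the $L^2$ convergence stated in that lemma.
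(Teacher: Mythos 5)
Your proposal is correct and follows essentially the same route as the paper: the paper likewise builds $\sigma_{ijk}$ from the massive approximations of Lemma~\ref{l:L4bd} (applying it directly to the antisymmetrized right-hand side $D_kq_{ij}-D_jq_{ik}$, which by linearity coincides with your $\psi_{ijk}-\psi_{ikj}$), and it also proves the divergence identity by showing that $\sum_k D_k^*\sigma_{ijk}-q_{ij}$ is harmonic on the probability space, hence constant by ergodicity, hence zero by centeredness. The only cosmetic difference is that the paper verifies the harmonicity at the level $\lambda>0$ and passes to the limit, which is exactly the alternative you mention at the end.
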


\begin{proof}
For every $i,j,k=1,\ldots,d$ and $\lambda>0$, we consider the equation 
\begin{equation}
(\lambda-\Delta)\sigma_{ijk}^\lambda=D_kq_{ij}-D_jq_{ik}.
\label{degeco}
\end{equation}
Lemma~\ref{l:L4bd} ensures that $\E\{|\sigma_{ijk}^\lambda|^4\}\les 1$, that $\sigma_{ijk}^\lambda$ converges in $L^2(\Omega)$, and denoting the limit by $\sigma_{ijk}$, we have $\sigma_{ijk}\in L^4(\Omega)$ with $-\Delta \sigma_{ijk}=D_kq_{ij}-D_jq_{ik}$. 

The skew symmetry $\sigma_{ijk}=-\sigma_{ikj}$ is clear by \eqref{degeco}.

To show $\sum_{k=1}^d D_k^* \sigma_{ijk}=q_{ij}$, it suffices to prove $\Delta(\sum_{k=1}^d D_k^* \sigma_{ijk}-q_{ij})=0$. Indeed, $D(\sum_{k=1}^d D_k^* \sigma_{ijk}-q_{ij})=0$ implies $\sum_{k=1}^d D_k^* \sigma_{ijk}-q_{ij}=\mathrm{const}$ by ergodicity, and since $\E\{\sum_{k=1}^d D_k^* \sigma_{ijk}\}=\E\{q_{ij}\}=0$, we have $\sum_{k=1}^d D_k^* \sigma_{ijk}=q_{ij}$. Now we consider
\begin{equation*}
\begin{aligned}
\Delta(\sum_{k=1}^d D_k^* \sigma_{ijk}-q_{ij})=&\lim_{\lambda\to 0}\Delta(\sum_{k=1}^d D_k^* \sigma_{ijk}^\lambda-q_{ij})\\
=&\lim_{\lambda\to 0}\sum_{k=1}^d D_k^*(D_jq_{ik}-D_kq_{ij}+\lambda \sigma_{ijk}^\lambda)-\Delta q_{ij}.
\end{aligned}
\end{equation*}
Since $\sum_{k=1}^d D_k^* q_{ik}=0$ and $\sigma_{ijk}^\lambda$ is uniformly bounded in $L^4(\Omega)$, we have $\Delta(\sum_{k=1}^d D_k^* \sigma_{ijk}-q_{ij})=0$, and this completes the proof.
\end{proof}

\begin{proof}[Proof of Proposition~\ref{p:2scaleGr}] We follow the proof of \cite[Theorem 5.1]{mourrat2014correlation}, but use the flux corrector as in \cite{gloria2014regularity} to simplify calculations. 
Define 
\begin{equation*}
z(x):=G(0,x)-G_\h(x)-\sum_{k=1}^d \nabla_kG_\h(x)\tilde{\phi}_k(x),
\end{equation*}
as the remainder in the two-scale expansion of the Green function, the matrix function $R:=-[q_1,\ldots,q_d]$ by
\begin{equation*}
\tilde{R}_{ij}(x)=-\tilde{q}_{ji}(x)=\bar{a}1_{i=j}-\tilde{a}_i(x)(1_{i=j}+\nabla_i\tilde{\phi}_j(x)),
\end{equation*}
and $\tilde{h}:\Z^d\to\R$ by
\begin{equation*}
\tilde{h}(x)=-\sum_{i=1}^d \nabla_i^*\left(\tilde{a}_i(x)\sum_{j=1}^d \tilde{\phi}_j(x+e_i)\nabla_i\nabla_jG_\h(x)\right).
\end{equation*}
By \cite[Proposition 5.6]{mourrat2014correlation}, we have
\begin{equation}
z(x)=\sum_{y\in\Z^d} G(x,y)\sum_{i,j=1}^d \tilde{R}_{ij}(y-e_i)\nabla_i^*\nabla_jG_\h(y)+\sum_{y\in\Z^d} G(x,y)\tilde{h}(y).
\label{eqz}
\end{equation}
Consider the first term on the right-hand side of \eqref{eqz}. Since $R_{ij}=-q_{ji}=-\sum_{k=1}^d D_k^* \sigma_{jik}$ by Lemma~\ref{l:geneCor}, we can write
\begin{equation*}
\begin{aligned}
\sum_{i,j=1}^d \tilde{R}_{ij}(y-e_i)\nabla_i^*\nabla_jG_\h(y)=&-\sum_{i,j,k=1}^d \nabla_k^*\tilde{\sigma}_{jik}(y-e_i)\nabla_i^*\nabla_jG_\h(y)\\
=&-\sum_{i,j,k=1}^d \nabla_k^*(\tilde{\sigma}_{jik}(y-e_i)\nabla_i^*\nabla_jG_\h(y)),
\end{aligned}
\end{equation*}
where the last equality uses the fact $\nabla_k^*(f(x)g(x))=\nabla_k^*f(x) g(x)+f(x-e_k)\nabla_k^*g(x)$ and $\sigma_{jik}+\sigma_{jki}=0$. Therefore, by using the flux corrector $\sigma$, we can write $\sum_{i,j=1}^d \tilde{R}_{ij}(y-e_i)\nabla_i^*\nabla_jG_\h(y)$ in divergence form. Note that $\tilde{h}$ is in divergence form. An integration by parts leads to
\begin{equation*}
\begin{aligned}
z(x)=&-\sum_{y\in\Z^d}\sum_{k=1}^d\nabla_{y,k} G(x,y)\sum_{i,j=1}^d \tilde{\sigma}_{jik}(y-e_i)\nabla_i^*\nabla_jG_\h(y)\\
&-\sum_{y\in\Z^d} \sum_{i=1}^d \nabla_{y,i}G(x,y)\tilde{a}_i(y)\sum_{j=1}^d \tilde{\phi}_j(y+e_i)\nabla_i\nabla_jG_\h(y),
\end{aligned}
\end{equation*}
so for $e\in \mathbb{B}$, we have
\begin{equation*}
\begin{aligned}
\nabla z(e)=&-\sum_{y\in\Z^d} \sum_{k=1}^d\nabla \nabla_{y,k} G(e,y)\sum_{i,j=1}^d \tilde{\sigma}_{jik}(y-e_i)\nabla_i^*\nabla_jG_\h(y)\\
&-\sum_{y\in\Z^d} \sum_{i=1}^d \nabla\nabla_{y,i}G(e,y)\tilde{a}_i(y)\sum_{j=1}^d \tilde{\phi}_j(y+e_i)\nabla_i\nabla_jG_\h(y).
\end{aligned}
\end{equation*}

Note that
\begin{equation*}
\begin{aligned}
\nabla_iz(x)=&\nabla_iG(0,x)-\nabla_iG_\h(x)-\sum_{k=1}^d (\nabla_i\tilde{\phi}_k(x)\nabla_kG_\h(x)+\tilde{\phi}_k(x+e_i)\nabla_i\nabla_kG_\h(x))\\
=&\left(\nabla_iG(0,x)-\nabla_iG_\h(x)-\sum_{k=1}^d \nabla_i\tilde{\phi}_k(x)\nabla_kG_\h(x)\right)-\sum_{k=1}^d\tilde{\phi}_k(x+e_i)\nabla_i\nabla_kG_\h(x),
\end{aligned}
\end{equation*}
and moreover, by the moments bounds on $\phi_k$ provided by Proposition~\ref{p:esCo} and the fact that $|\nabla_i\nabla_jG_\h(x)|\les |x|_*^{-d}$, we have
\begin{equation*}
\|\tilde{\phi}_k(x+e_i)\nabla_i\nabla_kG_\h(x)\|_2\les \frac{1}{|x|_*^d}\les  \frac{\log |x|_*}{|x|_*^d}.
\end{equation*}
As a consequence, in order to prove Proposition~\ref{p:2scaleGr}, it is enough to show that
\begin{equation}
\| \nabla z(e)\|_2\les \frac{\log |\underline{e}|_*}{|\underline{e}|_*^d}.
\label{2sgrb}
\end{equation}
%

In order to prove \eqref{2sgrb}, we note that $\nabla z(e)$ is a finite linear combination of terms in the form $\sum_{y\in\Z^d} \nabla\nabla_{y,k}G(e,y)f(y)\nabla_i^*\nabla_j G_\h(y)$ or 
$\sum_{y\in\Z^d} \nabla\nabla_{y,k}G(e,y)f(y)\nabla_i\nabla_j G_\h(y)$ for some $i,j,k$ and $f$. Clearly, they can be bounded by $\sum_{y\in\Z^d} |\nabla\nabla_{y,k}G(e,y)f(y)||y|_*^{-d}$, so we have
\begin{equation*}
\begin{aligned}
\| \nabla z(e)\|_2\les &\|\sum_{y\in\Z^d} |\nabla\nabla_{y,k}G(e,y)f(y)||y|_*^{-d}\|_2\\
\leq & \sum_{y\in\Z^d} |y|_*^{-d}\|\nabla\nabla_{y,k}G(e,y)f(y)\|_2\\
\leq &\sum_{y\in\Z^d} |y|_*^{-d}\|\nabla\nabla_{y,k}G(e,y)\|_4\|f(y)\|_4.
\end{aligned}
\end{equation*}
When $f=\tilde{\sigma}_{jik}$ or $\tilde{a}_i\tilde{\phi}_j$, $\|f(y)\|_4$ is uniformly bounded by Lemma~\ref{l:geneCor} and  Proposition~\ref{p:esCo}, thus by applying Proposition~\ref{p:esGr} and Lemma~\ref{l:cvP}, we obtain
\begin{equation*}
\|\nabla z(e)\|_2\les \sum_{y\in\Z^d} \frac{1}{|y|_*^{d}}\frac{1}{|y-\underline{e}|_*^{d}}\les \frac{\log |\underline{e}|_*}{|\underline{e}|_*^d}.
\end{equation*}
The proof of Proposition \ref{p:2scaleGr} is complete.
\end{proof}

\subsection{Second-order Poincar\'e inequality}

Let $d_K$ be the Kantorovich-Wasserstein distance
\begin{equation*}
d_K(X,Y)=\sup\{\E\{h(X)\}-\E\{h(Y)\}: \|h'\|_\infty\leq 1\}.
\end{equation*}
In order to show that the rescaled fluctuations are asymptotically Gaussian, we will use the following result.
\begin{prop}\cite[Proposition~2.1]{MN}
\label{p:2poin}
Let $F\in L^2(\Omega)$ be such that $\E\{F\}=0$ and $\E\{F^2\}=1$. Assume also that $F$ has weak derivatives satisfying $\sum_e \E\{|\partial_e F|^4\}^\frac12<\infty$ and $\E\{|\partial_e\partial_{e'}F|^4\}<\infty$ for all $e,e'\in \B$. Let $Y\sim N(0,1)$. Then
\begin{equation}
d_K(F,Y)\leq \sqrt{\frac{5}{\pi}}\sqrt{\sum_{e'\in\B}\left(\sum_{e\in\B}\|\partial_e F\|_4\|\partial_e\partial_{e'} F\|_4\right)^2}.
\end{equation}
\end{prop}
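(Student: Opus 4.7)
The plan is to follow Stein's method combined with a spectral representation on Gauss space. For each $1$-Lipschitz $h$, let $f_h$ solve the Stein equation $f_h'(x) - x f_h(x) = h(x) - \E h(Y)$; classical bounds give $\|f_h'\|_\infty \le \sqrt{2/\pi}$ and $\|f_h''\|_\infty \le 2$. Since $\E F = 0$ and $\E F^2 = 1$, the definition of $d_K$ reduces the problem to an accurate representation of $\E[F f_h(F)]$, because
\[
|\E h(F) - \E h(Y)| = |\E[f_h'(F) - F f_h(F)]|.
\]

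The key identity is $\E[F f_h(F)] = \E[f_h'(F)\,S]$, where
\[
S := \sum_{e \in \B} \partial_e F \cdot (1+\L)^{-1}\partial_e F.
\]
I would derive it using the Ornstein--Uhlenbeck semigroup $P_s := e^{-s\L}$: write $F = \int_0^\infty \L P_s F \, ds$, integrate by parts on $\Omega$ via $\L = \partial^*\partial$, use the chain rule $\partial_e f_h(F) = f_h'(F)\partial_e F$ and the intertwining $\partial_e P_s = e^{-s} P_s \partial_e$ (a consequence of $[\L,\partial_e]=-\partial_e$), and finally $\int_0^\infty e^{-s} P_s \, ds = (1+\L)^{-1}$. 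Applying Proposition~\ref{p:HSre} to the pair $(F,F)$ shows $\E S = \E F^2 = 1$, so Cauchy--Schwarz gives
\[
|\E h(F) - \E h(Y)| = |\E[f_h'(F)(1-S)]| \le \|f_h'\|_\infty \sqrt{\var S}.
\]

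The remaining step is a Gaussian Poincar\'e bound on $\var S$: $\var S \le \sum_{e'} \E[(\partial_{e'}S)^2]$. Writing $\Psi_e := (1+\L)^{-1}\partial_e F$, the shift $\partial_{e'}(1+\L)^{-1} = (2+\L)^{-1}\partial_{e'}$ (again from $[\L,\partial_{e'}]=-\partial_{e'}$) gives $\partial_{e'}\Psi_e = (2+\L)^{-1}\partial_{e'}\partial_e F$, and the Leibniz rule yields
\[
\partial_{e'} S = \sum_{e}\bigl(\partial_{e'}\partial_e F\cdot \Psi_e + \partial_e F\cdot \partial_{e'}\Psi_e\bigr).
\]
The $L^4$-contractivity of $(1+\L)^{-1}$ and $(2+\L)^{-1}$ given by Proposition~\ref{p:HSre}, combined with H\"older and the triangle inequality, produces
\[
\|\partial_{e'} S\|_2 \le 2\sum_e \|\partial_e F\|_4 \|\partial_{e'}\partial_e F\|_4,
\]
and assembling everything delivers the claimed inequality up to a universal multiplicative constant. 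Obtaining the sharp constant $\sqrt{5/\pi}$ requires tighter bookkeeping, for instance exploiting the symmetry $\partial_e\partial_{e'}=\partial_{e'}\partial_e$ to combine the two Leibniz terms before taking norms.

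The main technical obstacle is the rigorous justification of the OU-semigroup identity for our weakly differentiable $F$ and for the Stein solution $f_h$, which is only Lipschitz: one must approximate $F$ by smooth cylindrical functions and $f_h$ by smooth mollifications, verify the chain rule, intertwining and integration-by-parts for the approximants, and pass to the limit using precisely the $L^4$ bounds on $\partial_e F$ and $\partial_{e'}\partial_e F$ assumed in the statement.
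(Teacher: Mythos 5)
This proposition is not proved in the paper at all: it is quoted from \cite[Proposition~2.1]{MN}, so your attempt can only be measured against that source, and in structure it is the same argument — Stein's equation with the bound $\|f_h'\|_\infty\le\sqrt{2/\pi}$, the identity $\E\{Ff_h(F)\}=\E\{f_h'(F)S\}$ with $S=\sum_e\partial_eF\,(1+\L)^{-1}\partial_eF$ (your semigroup derivation is just a re-proof of Proposition~\ref{p:HSre} combined with the chain rule; one could equally apply Proposition~\ref{p:HSre} directly to the pair $(f_h(F),F)$), the normalization $\E\{S\}=\E\{F^2\}=1$, the Gaussian Poincar\'e inequality $\var\{S\}\le\sum_{e'}\E\{(\partial_{e'}S)^2\}$, the commutation $\partial_{e'}(1+\L)^{-1}=(2+\L)^{-1}\partial_{e'}$, and the Leibniz rule. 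All of these steps are sound; note in passing that the hypothesis $\sum_e\|\partial_eF\|_4^2<\infty$ is exactly what makes $S$ a well-defined element of $L^2(\Omega)$ and lets you differentiate the series term by term, and that Proposition~\ref{p:HSre} literally asserts contractivity only for $(1+\L)^{-1}$, so the corresponding statement for $(2+\L)^{-1}$ must be noted separately.

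The one concrete shortfall is the constant, and your suggested repair is not the right mechanism: writing $\partial_{e'}S=\sum_e\bigl(\partial_{e'}\partial_eF\cdot(1+\L)^{-1}\partial_eF+\partial_eF\cdot(2+\L)^{-1}\partial_{e'}\partial_eF\bigr)$, the two terms are different operators applied to different factors, and since $e'$ is fixed while $e$ is summed, the symmetry $\partial_e\partial_{e'}=\partial_{e'}\partial_e$ does not let you merge them; your bookkeeping therefore stops at $\sqrt{8/\pi}$. The missing ingredient is quantitative: $(2+\L)^{-1}=\int_0^\infty e^{-2t}e^{-t\L}\,dt$ has norm at most $\tfrac12$ (not merely $1$) on every $L^p(\Omega)$, $p\ge2$, by the same semigroup argument underlying Proposition~\ref{p:HSre}. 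With this, your own estimate improves to $\|\partial_{e'}S\|_2\le\tfrac32\sum_e\|\partial_eF\|_4\|\partial_e\partial_{e'}F\|_4$, hence $d_K(F,Y)\le\tfrac32\sqrt{2/\pi}\,\bigl(\sum_{e'}(\sum_e\|\partial_eF\|_4\|\partial_e\partial_{e'}F\|_4)^2\bigr)^{1/2}=\sqrt{9/(2\pi)}\,(\cdots)^{1/2}$, which is even slightly smaller than the stated bound; the constant $\sqrt{5/\pi}$ itself arises if one instead uses $(a+b)^2\le2a^2+2b^2$ on the two sums, giving $\var\{S\}\le(2+2\cdot\tfrac14)\sum_{e'}(\cdots)^2=\tfrac52\sum_{e'}(\cdots)^2$ and $\sqrt{2/\pi}\cdot\sqrt{5/2}=\sqrt{5/\pi}$. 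So with that single operator-norm observation your argument does deliver the stated inequality; for the way the proposition is used here (Lemma~\ref{l:k0k3} and Remark~\ref{r:rate}) any universal constant would in any case suffice.
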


Using the above result, we only need to show the following lemma to prove \eqref{conN}.
\begin{lem}
Let
\begin{equation}
\kappa^2:=\sum_{e'\in\B}\left(\|\partial_e\U_\eps^{(f)}(g)\|_4\sum_{e\in\B}\|\partial_{e}\partial_{e'}\U_\eps^{(f)}(g)\|_4\right)^2.
\label{bdk3}
\end{equation}
If $\sigma_g^2$ defined in \eqref{mainVar} is not zero, then
\begin{equation*}
\frac{\kappa^2}{\var^2\{\U_\eps^{(f)}(g)\}}\les \eps^d|\log\eps|^2.
\end{equation*}
\label{l:k0k3}
\end{lem}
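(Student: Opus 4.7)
The starting point is the explicit formula
\[
\partial_{e'}\U_\eps^{(f)}(g)=-\eps^{d+2}\,\partial_{e'}a_{e'}\sum_{x,y\in\Z^d}\nabla G(x,e')\,\nabla G(y,e')\,g(\eps x)\,f(\eps y),
\]
and its differentiation in $\zeta_e$, which by $\partial_eG(x,y)=-\partial_ea_e\nabla G(x,e)\nabla G(y,e)$ produces three kinds of terms: a $\delta_{e=e'}$ term coming from $\partial_e\partial_{e'}a_{e'}=\eta''(\zeta_{e'})\delta_{e=e'}$, and two "cross" terms where the mixed derivative hits either $\nabla G(x,e')$ or $\nabla G(y,e')$, producing factors $\nabla G(x,e)\,\nabla\nabla G(e,e')$ or $\nabla G(y,e)\,\nabla\nabla G(e,e')$. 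The bounds on $\eta,\eta',\eta''$ make the prefactors $\partial_e a_e$, $\partial_e^2 a_e$ deterministically bounded.

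The first main step is to apply generalized H\"older together with the annealed estimates of Proposition~\ref{p:esGr}, which give $\|\nabla G(x,e)\|_{12}\les|x-\underline e|_*^{1-d}$ and $\|\nabla\nabla G(e,e')\|_{12}\les|\underline e-\underline{e'}|_*^{-d}$. Writing
\[
A(\eps,z):=\sum_x |g(\eps x)|\,|z-x|_*^{1-d},\qquad B(\eps,z):=\sum_y|f(\eps y)|\,|z-y|_*^{1-d},
\]
and estimating these by Riemann sum / rescaling against the continuous Riesz potential $v\mapsto|v|^{1-d}$, one obtains $A(\eps,z),B(\eps,z)\les\eps^{-1}\Psi(\eps z)$ for a bounded, rapidly decaying function $\Psi$ (depending on $f,g$). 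From this,
\[
\|\partial_{e'}\U_\eps^{(f)}(g)\|_4\les \eps^{d+2}A(\eps,\underline{e'})B(\eps,\underline{e'})\les\eps^{d}\Psi(\eps\underline{e'})^2,
\]
and, gathering the three types of contributions to $\partial_e\partial_{e'}\U_\eps^{(f)}(g)$ and summing,
\[
\sum_{e\in\B}\|\partial_e\partial_{e'}\U_\eps^{(f)}(g)\|_4\les \eps^{d+2}\Big(\eps^{-2}\Psi(\eps\underline{e'})^2+B(\eps,\underline{e'})\!\!\sum_e\!\frac{A(\eps,\underline e)}{|\underline e-\underline{e'}|_*^d}+A(\eps,\underline{e'})\!\!\sum_e\!\frac{B(\eps,\underline e)}{|\underline e-\underline{e'}|_*^d}\Big).
\]
The key nontrivial piece is the convolution sum $\sum_e|\underline e-\underline{e'}|_*^{-d}\Psi(\eps\underline{e})$. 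After the change of variables $u=\eps\underline{e}$ the lattice cut-off in $|\cdot|_*$ becomes a ball of radius $\eps$, and the singular part contributes $O(\log\eps^{-1})$ uniformly in $\underline{e'}$ lying in the support region, while the bulk contributes $O(1)\Psi(\eps\underline{e'})$. Combining these yields
\[
\sum_e\|\partial_e\partial_{e'}\U_\eps^{(f)}(g)\|_4\les \eps^d|\log\eps|\,\Psi(\eps\underline{e'})^2.
\]

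The final step is to assemble and compare to the variance. Multiplying the two bounds and squaring gives
$\bigl(\|\partial_{e'}\U\|_4\sum_e\|\partial_e\partial_{e'}\U\|_4\bigr)^2\les\eps^{4d}|\log\eps|^2\Psi(\eps\underline{e'})^8$, and then summing over $e'$ (a Riemann sum of spacing $\eps$ against an integrable function of $\eps\underline{e'}$) contributes a factor $\eps^{-d}$, producing $\kappa^2\les\eps^{3d}|\log\eps|^2$. On the other hand, the assumption $\sigma_g^2\neq0$ combined with the convergence $\eps^{-d}\var\{\U_\eps^{(f)}(g)\}\to\sigma_g^2$ proven earlier in Section~\ref{s:cv} yields $\var\{\U_\eps^{(f)}(g)\}\gtrsim\eps^d$ for $\eps$ small, hence $\var^2\gtrsim\eps^{2d}$ and the ratio is $\les\eps^d|\log\eps|^2$, as required.

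\textbf{Main obstacle.} The delicate point is not any individual bound but the bookkeeping of the convolution $\sum_e|\underline e-\underline{e'}|_*^{-d}\Psi(\eps\underline e)$: the discrete kernel $|\cdot|_*^{-d}$ sits exactly at the borderline of integrability, so one must not throw away the $|\cdot|_*$-regularization, and this is precisely where the logarithmic loss $|\log\eps|$ enters. Everything else is essentially a careful application of Proposition~\ref{p:esGr} together with $L^p$-H\"older and Riemann-sum bounds exploiting that $f,g$ are compactly supported and continuous.
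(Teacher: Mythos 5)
Your proof is correct and takes essentially the same route as the paper: differentiate the Green-function representation of $\U_\eps^{(f)}(g)$, apply the annealed estimates of Proposition~\ref{p:esGr}, reduce everything to the discrete convolution bounds of Lemma~\ref{l:MN} (which is where the $|\log\eps|$ enters), conclude $\kappa^2\lesssim \eps^{3d}|\log\eps|^2$, and divide by $\var\{\U_\eps^{(f)}(g)\}^2\gtrsim\eps^{2d}$, which follows from \eqref{conV} when $\sigma_g^2\neq 0$. The only (harmless) discrepancy is the pairing of indices in $\kappa^2$: as used in Proposition~\ref{p:2poin} the factor $\|\partial_e\U_\eps^{(f)}(g)\|_4$ carries the inner summation index $e$, whereas you pull out $\|\partial_{e'}\U_\eps^{(f)}(g)\|_4$; since your bound $\|\partial_e\U_\eps^{(f)}(g)\|_4\lesssim\eps^{d}|\eps\underline{e}|_*^{-(2d-2)}$ holds for every edge, the identical computation covers the intended quantity and yields the same conclusion.
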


\begin{rem}
\label{r:rate}
By Proposition~\ref{p:2poin} and Lemma~\ref{l:k0k3}, if $\sigma_g^2 \neq 0$, then we actually obtain the convergence rate
\begin{equation*}
d_K\left(\frac{\U_\eps^{(f)}(g)-\E\{\U_\eps^{(f)}(g)\}}{\sqrt{\var\{\U_\eps^{(f)}(g)\}}},N(0,1)\right)\les \eps^{\frac{d}{2}}|\log\eps|.
\end{equation*}
\end{rem}

\section{Convergence of the variance}
\label{s:cv}
The aim of this section is to prove \eqref{conV}. 

Recall that $\U_\eps^{(f)}(g)=\eps^{d+2}\sum_{x,y\in \Z^d}G(x,y)g(\eps x)f(\eps y)$, so
\begin{equation*}
\begin{aligned}
\var\{\U_\eps^{(f)}(g)\}=&\eps^{2d+4}\sum_{x,y,z,w\in \Z^d} \mathrm{Cov}\{G(x,y),G(z,w)\}g(\eps x)f(\eps y)g(\eps z)f(\eps w)\\
=&\eps^{2d+4}\sum_{x,y,z,w\in \eps \Z^d} \mathrm{Cov}\{G(\frac{x}{\eps},\frac{y}{\eps}),G(\frac{z}{\eps},\frac{w}{\eps})\}g(x)f(y)g(z)f(w).
\end{aligned}
\end{equation*}
The covariance is given explicitly by the Helffer-Sj\"ostrand representation
\begin{equation}
\mathrm{Cov}\{G(x,y),G(z,w)\}=\sum_{e\in\B} \langle \partial_e G(x,y), (1+\L)^{-1} \partial_e G(z,w)\rangle,
\label{covGr}
\end{equation}
and since $\partial_e G(x,y)=-\partial_e a_e\nabla G(x,e)\nabla G(y,e)$ by Lemma~\ref{l:vd}, \eqref{covGr} is rewritten as
\begin{equation}
\begin{aligned}
&\mathrm{Cov}\{G(x,y),G(z,w)\}\\
=&\sum_{e\in\B} \langle \partial_ea_e \nabla G(x,e)\nabla G(y,e), (1+\L)^{-1} \partial_ea_e \nabla G(z,e)\nabla G(w,e)\rangle.
\label{covGr1}
\end{aligned}
\end{equation}
To prove the convergence of $\eps^{-d}\var\{\U_\eps^{(f)}(g)\}$, we use the two-scale expansion of the Green function obtained in Proposition~\ref{p:2scaleGr}.
For $e\in \mathbb{B}$, $x_i\in \Z^d, i=1,2,3,4$, define 
\begin{equation}
\mathcal{E}(x_1,x_2,x_3,x_4):=\sum_{e\in\B}\sum_{i=1}^4\frac{\log |\underline{e}-x_i|_*}{|\underline{e}-x_i|_*^d}\prod_{j=1, j\neq i}^4
\frac{1}{|\underline{e}-x_j|_*^{d-1}}
\label{ercoEx}
\end{equation}
and 
\begin{equation*}
\mathcal{K}(x,y,z,w):=\sum_{i,j,k,l=1}^d \tilde{K}_{ijkl}\sum_{v\in \Z^d}\nabla_iG_\h(v-x)\nabla_jG_\h(v-y)\nabla_kG_\h(v-z)\nabla_lG_\h(v-w)
\end{equation*}
with $\tilde{K}_{ijkl}$ given by \eqref{excos}.

\begin{prop}
$|\mathrm{Cov}\{G(x,y),G(z,w)\}-\mathcal{K}(x,y,z,w)|\les \mathcal{E}(x,y,z,w)$.
\label{p:exCov}
\end{prop}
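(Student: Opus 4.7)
The Helffer–Sj\"ostrand representation \eqref{covGr1} writes the covariance as an edge sum of terms of the form
$\langle \partial_ea_e\,\nabla G(x,e)\,\nabla G(y,e),\,(1+\L)^{-1}\partial_ea_e\,\nabla G(z,e)\,\nabla G(w,e)\rangle$.
The plan is to substitute the two-scale surrogate $\sum_k\nabla_k G_\h(\underline e-\cdot)(e_k+\nabla\tilde\phi_k)(e)$ in place of each of the four Green-function gradients, one factor at a time, by four successive applications of the inner-product form \eqref{2scalew} of Proposition~\ref{p:2scaleGr}.

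For the first step I would rewrite the $e$-summand as $\langle \nabla G(x,e),X_1\rangle$ with
$X_1 := \partial_ea_e\,\nabla G(y,e)\,(1+\L)^{-1}\bigl[\partial_ea_e\,\nabla G(z,e)\,\nabla G(w,e)\bigr]$
and apply \eqref{2scalew}, producing an error $\les \|X_1\|_2\log|\underline e-x|_*/|\underline e-x|_*^d$. To bound $\|X_1\|_2$, one can split via H\"older as $\nabla G(y,e)\in L^4$ times the resolvent factor in $L^4$; using the $L^p$-contractivity of $(1+\L)^{-1}$ (Proposition~\ref{p:HSre}) and one more H\"older to split the resolvent's argument into two $L^8$ Green-function gradients, the annealed estimates of Proposition~\ref{p:esGr} yield
$\|X_1\|_2\les|\underline e-y|_*^{-(d-1)}|\underline e-z|_*^{-(d-1)}|\underline e-w|_*^{-(d-1)}$,
which is precisely the factor needed to produce the $i=1$ term of $\mathcal E(x,y,z,w)$ after summing over $e$. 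The replacements of $\nabla G(y,e)$, $\nabla G(z,e)$ and $\nabla G(w,e)$ proceed analogously; steps 3 and 4 require flipping the two sides of the inner product using self-adjointness of $(1+\L)^{-1}$, and in every step the corrector factors $(e_k+\nabla\tilde\phi_k)(e)$ inserted by earlier substitutions contribute bounded $L^p$-factors via Proposition~\ref{p:esCo}.

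After the four substitutions the $e$-summand equals
$\sum_{i,j,k,l}\nabla_i G_\h(\underline e-x)\nabla_j G_\h(\underline e-y)\nabla_k G_\h(\underline e-z)\nabla_l G_\h(\underline e-w)\,K_{ijkl}(e)$,
and stationarity of $\tilde a$, $\phi_k$ and $\L$ implies that $K_{ijkl}(e)$ depends only on the direction of $e$. Writing $\sum_{e\in\B}=\sum_{v\in\Z^d}\sum_{n=1}^d$ and using $\tilde K_{ijkl}=\sum_n K_{ijkl}(e_n)$ then reproduces $\mathcal K(x,y,z,w)$ exactly. The hardest part will be the $L^p$-bookkeeping across the four substitution steps: each test variable $X$ must lie in $L^2(\Omega)$ with norm decaying as the product of three $|\underline e-\cdot|_*^{-(d-1)}$ factors, which forces one to choose H\"older partitions so that Green-function gradients are paired at $L^4$ or $L^8$ (within the range of Proposition~\ref{p:esGr}) and corrector factors are pushed to $L^8$ or higher (all bounded by Proposition~\ref{p:esCo}); the $L^4$-contractivity of $(1+\L)^{-1}$ then closes the loop.
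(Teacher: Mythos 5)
Your proposal is correct and follows essentially the same route as the paper: rewrite each edge summand of the Helffer--Sj\"ostrand representation as an inner product against one Green-function gradient, apply \eqref{2scalew} four times (using self-adjointness of $(1+\L)^{-1}$ to reach the factors inside the resolvent), bound the test variable in $L^2$ via H\"older, the $L^p$-contractivity of $(1+\L)^{-1}$, Proposition~\ref{p:esGr} and Proposition~\ref{p:esCo}, and finally use stationarity to identify the remaining sum with $\mathcal K$. The only difference is that you spell out the H\"older/$L^4$--$L^8$ bookkeeping that the paper leaves implicit, which is a fair elaboration rather than a different argument.
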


\begin{proof}
Each term in \eqref{covGr1} contains four factors of gradient of the Green function. We first consider $\nabla G(x,e)$ and let
\begin{equation*}
X=\partial_ea_e \nabla G(y,e)(1+\L)^{-1} \partial_ea_e \nabla G(z,e)\nabla G(w,e).
\end{equation*} 
By \eqref{2scalew}, we have
\begin{equation*}
|\langle X, \nabla G(x,e)\rangle|-\sum_{k=1}^d \nabla_k G_\h(e-x)\langle X, (e_k+\nabla\tilde{\phi}_k)(e)\rangle|\les \|X\|_2\frac{\log |\underline{e}-x|_*}{|\underline{e}-x|_*^d}.
\end{equation*}
By Proposition~\ref{p:esGr} and the fact that $(1+\L)^{-1}$ is a contraction from $L^p(\Omega)$ to $L^p(\Omega)$ for any $p\geq 2$, we have $\|X\|_2\les |\underline{e}-y|_*^{1-d}|\underline{e}-z|_*^{1-d}|\underline{e}-w|_*^{1-d}$, so
\begin{equation*}
|\mathrm{Cov}\{G(x,y),G(z,w)\}-\sum_{e\in\B}\sum_{k=1}^d \nabla_k G_\h(e-x)\langle X, (e_k+\nabla\tilde{\phi}_k)(e)\rangle|\les \mathcal{E}(x,y,z,w).
\end{equation*}
Now we carry out the same argument for $\nabla G(y,e),\nabla G(z,e),\nabla G(w,e)$, and in the end obtain
\begin{equation*}
|\mathrm{Cov}\{G(x,y),G(z,w)\}-\mathcal{K}(x,y,z,w)|\les \mathcal{E}(x,y,z,w).
\end{equation*}
The proof is complete.
\end{proof}

Proposition~\ref{p:exCov} leads to
\begin{equation*}
\begin{aligned}
&|\eps^{-d}\var\{\U_\eps^{(f)}(g)\}-\eps^{d+4}\sum_{x,y,z,w\in \eps\Z^d} \mathcal{K}(\frac{x}{\eps},\frac{y}{\eps},\frac{z}{\eps},\frac{w}{\eps})g(x)f(y)g(z)f(w)|\\
\les & \eps^{d+4}\sum_{x,y,z,w\in \eps\Z^d}\mathcal{E}(\frac{x}{\eps},\frac{y}{\eps},\frac{z}{\eps},\frac{w}{\eps})|g(x)f(y)g(z)f(w)|.
\end{aligned}
\end{equation*}
Hence, the proof of \eqref{conV} will be complete once we have proved the following two lemmas.

\begin{lem}
$\eps^{d+4}\sum_{x,y,z,w\in \eps\Z^d}\mathcal{E}(\frac{x}{\eps},\frac{y}{\eps},\frac{z}{\eps},\frac{w}{\eps})|g(x)f(y)g(z)f(w)|\to 0$ as $\eps\to 0$.
\label{l:conVre}
\end{lem}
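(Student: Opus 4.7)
The quantity $\mathcal{E}$ is symmetric under permutations of the four arguments in the sense that summing over $i=1,\ldots,4$ in \eqref{ercoEx} produces four terms that differ only in which of $\td x,\td y,\td z,\td w$ carries the exponent $d$ (versus $d-1$) and the extra $\log$. I will bound the $i=1$ contribution in detail, the others being handled identically. Identifying $\sum_{e\in\B}$ with $d\sum_{\underline e\in\Z^d}$ and writing $u=\underline e$, and absorbing the logarithm via $\log|z|_*\les |z|_*^\delta$ (for an arbitrarily small fixed $\delta\in(0,1)$), reduces the problem to showing that
\begin{equation*}
T:=\eps^{d+4}\sum_{u,\td x,\td y,\td z,\td w\in\Z^d}\frac{|g(\eps\td x)f(\eps\td y)g(\eps\td z)f(\eps\td w)|}{|u-\td x|_*^{d-\delta}|u-\td y|_*^{d-1}|u-\td z|_*^{d-1}|u-\td w|_*^{d-1}}\xrightarrow[\eps\to 0]{}0.
\end{equation*}

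The scaling is transparent after setting $V=\eps u$, $X=\eps\td x$, $Y=\eps\td y$, $Z=\eps\td z$, $W=\eps\td w$: each of the five lattice sums contributes a factor $\eps^{-d}$ (converting $\sum$ to $\int$) and each power $|u-\cdot|_*^{-a}$ rescales as $\eps^a\,|V-\cdot|^{-a}$ on scales $\gg\eps$. Collecting powers,
\[
\eps^{d+4}\cdot\eps^{-5d}\cdot\eps^{(d-\delta)+3(d-1)}=\eps^{1-\delta},
\]
so the natural guess is $T\les \eps^{1-\delta}\,\mathcal{I}$ with
\[
\mathcal{I}:=\int_{\R^{5d}}\frac{|g(X)f(Y)g(Z)f(W)|\,dV\,dX\,dY\,dZ\,dW}{|V-X|^{d-\delta}|V-Y|^{d-1}|V-Z|^{d-1}|V-W|^{d-1}}.
\]
For each fixed $V$, all four singular factors have exponent $<d$, hence are locally integrable; combined with the compact support and boundedness of $f,g$ the four inner integrals in $X,Y,Z,W$ are therefore bounded uniformly in $V$. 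The outer integrand vanishes once $V$ is far from the supports of $f$ and $g$, so $\mathcal{I}<\infty$. Picking any $\delta\in(0,1)$ then yields $T=O(\eps^{1-\delta})\to 0$.

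The main obstacle is turning this Riemann-sum heuristic into a rigorous upper bound, because near $u\approx\td x$ the discrete factor $|u-\td x|_*^{-(d-\delta)}$ is \emph{not} pointwise controlled by its continuous counterpart. I would resolve this by performing the sum over $u$ first using Lemma~\ref{l:cvP} (the convolution estimate of the form $\sum_y|x-y|_*^{-a}|y|_*^{-b}\les |x|_*^{-(a+b-d)}$ when $a,b<d$ and $a+b>d$): pair the factors to reduce the four-factor $u$-sum, by iterated convolution, to a bound of the form $|\td x-\td y|_*^{-\alpha_1}|\td x-\td z|_*^{-\alpha_2}|\td x-\td w|_*^{-\alpha_3}$ (with $\alpha_j<d$ and $\sum\alpha_j=4d-3-\delta-d$). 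The remaining four lattice sums against $|g(\eps\cdot)f(\eps\cdot)g(\eps\cdot)f(\eps\cdot)|$ then involve only locally integrable singularities, and the standard discrete-to-continuous comparison (dominating each discrete sum against a bounded compactly supported integrand by the corresponding continuous integral, which is finite by the argument of the previous paragraph) gives the desired $\eps^{1-\delta}$ bound.
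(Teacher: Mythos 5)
Your proposal is correct in substance and uses the same two ingredients as the paper (discrete convolution estimates for the edge-sum, then Riemann-sum scaling), but it organizes them differently: the paper first splits the sum according to the number of distinct points among $x,y,z,w$ and invokes the case-by-case bound of Lemma~\ref{l:erCov} on $\mathcal{E}$, obtaining rates $\eps^4,\eps^3,\eps^2,\eps^{1-}$ in the four cases, whereas you keep a unified expression, absorb the logarithm into a $\delta$-loss, perform the sum over $u=\underline e$ first, and settle for the worst-case rate $\eps^{1-\delta}$ throughout, which indeed suffices. Two points in your outline need tightening. First, the four-factor sum over $u$ cannot be obtained by literally ``iterating'' the two-point Lemma~\ref{l:cvP}, since all four factors share the variable $u$; the correct tool is the region-splitting multi-point estimate (the paper's Lemma~\ref{l:cvmP}, from which Lemma~\ref{l:erCov} is deduced), and its output is a \emph{sum over choices of base point and nearest neighbour}, not the single product $|\td x-\td y|_*^{-\alpha_1}|\td x-\td z|_*^{-\alpha_2}|\td x-\td w|_*^{-\alpha_3}$ anchored at $\td x$ that you wrote; each term of that sum does have exponents in $(0,d)$ adding up to $3d-3-\delta$, so your scaling and Riemann-sum argument applies verbatim term by term. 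Second, Lemma~\ref{l:cvmP} is stated for mutually distinct points, so the configurations with coincidences ($\td x=\td y$, etc.) require a separate word: they are harmless, since the $|\cdot|_*$-regularized kernels stay bounded on the diagonal and each coincidence costs an extra factor of order $\eps^{d-\alpha_j}$ relative to the generic term, but this is exactly the issue the paper's decomposition by coincidence pattern is designed to dispose of cleanly, so it should not be passed over in silence. With these two repairs (replace ``iterated convolution'' by the multi-point estimate with its sum over pairings, and treat or dismiss the diagonal configurations explicitly), your proof is complete and gives the same conclusion, with the uniform bound $O(\eps^{1-\delta})$ in place of the paper's sharper rates in the degenerate cases.
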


\begin{lem}
$\eps^{d+4}\sum_{x,y,z,w\in \eps\Z^d} \mathcal{K}(\frac{x}{\eps},\frac{y}{\eps},\frac{z}{\eps},\frac{w}{\eps})g(x)f(y)g(z)f(w)\to \sigma_g^2$ as $\eps\to 0$.
\label{l:conVma}
\end{lem}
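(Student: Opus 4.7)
The plan is to prove Lemma~\ref{l:conVma} by recognising the left-hand side as a Riemann sum whose mesh goes to zero and identifying its continuum limit with $\sigma_g^2$. First I would reindex the inner sum defining $\mathcal{K}$ by setting $V = \eps v \in \eps\Z^d$, so that $\nabla_i G_\h(v - x/\eps) = \nabla_i G_\h((V-x)/\eps)$ and the entire left-hand side becomes a $5$-fold sum over $(x,y,z,w,V) \in (\eps\Z^d)^5$. The continuum target $\sigma_g^2$ is itself a $5$-fold integral over $\R^{5d}$, so the problem is naturally one of convergence of Riemann sums.

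Next I would invoke the classical lattice-to-continuum comparison for the Green function of the homogeneous operator $\nabla^* a_\h \nabla$ on $\Z^d$: there is a constant such that for every $n \in \Z^d\setminus\{0\}$,
\[
|\nabla_i G_\h(n) - \partial_i \cG_\h(n)| \les |n|_*^{-d},
\]
together with the pointwise bound $|\nabla_i G_\h(n)| \les |n|_*^{-(d-1)}$. Using homogeneity of $\cG_\h$, $\partial_i \cG_\h((V-x)/\eps) = \eps^{d-1}\partial_i \cG_\h(V-x)$. Substituting these into the formula for $\mathcal{K}$ converts each factor $\nabla_i G_\h((V-x)/\eps)$ into $\eps^{d-1}\partial_i \cG_\h(V-x)$ plus an error. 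Collecting the powers of $\eps$ ($\eps^{d+4}$ from the statement, $\eps^{4(d-1)}$ from the four gradient factors, and $\eps^d$ interpreting $\eps^d\sum_{V\in\eps\Z^d}$ as a Riemann weight, times four more $\eps^d$ from the $x,y,z,w$ sums), the main term is precisely the $\eps^{5d}$-weighted Riemann sum of the integrand defining $\sigma_g^2$. The error from replacing one of the $\nabla G_\h$ factors by $\partial\cG_\h$ contributes an extra factor of $|V-\cdot|_*^{-d}$ at one slot, which pairs with three decaying factors to give a total that is $O(\eps)$ after summation, using the same type of bounds as appeared in \eqref{2sgrb}.

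The remaining step is to show that the scaled Riemann sum converges to the integral defining $\sigma_g^2$. The integrand $\prod_{\cdot \in \{x,y,z,w\}} |V-\cdot|^{-(d-1)}$ is only locally integrable, with singularities when $V$ coincides with one of $x,y,z,w$ and worse (but still integrable against smooth $f,g$ in $d\ge 3$) singularities on multi-coincidence loci. To handle this I would split the domain $\R^{5d}$ and $(\eps\Z^d)^5$ into a ``bulk'' where all five points are at mutual distance $\ge\delta$, and a ``neighbourhood'' where some pair is within $\delta$. On the bulk the integrand is continuous and bounded, so the Riemann sum converges uniformly to the integral with an error $o_\eps(1)$ for fixed $\delta$. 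On the neighbourhood I would use only the pointwise bound $|\nabla_i G_\h(n)| \les |n|_*^{-(d-1)}$ on the discrete side, and the analogous bound on the continuous side, together with the compact support and boundedness of $f$ and $g$, to show that both contributions are $O(\delta^\alpha)$ for some $\alpha>0$, uniformly in $\eps$. Sending $\eps\to 0$ then $\delta\to 0$ closes the argument.

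The main obstacle will be the neighbourhood estimate. Although $\sigma_g^2$ is finite, the four factors $|V-\cdot|^{-(d-1)}$ produce a borderline-integrable integrand near loci where two of $\{x,y,z,w,V\}$ coincide, and the uniformity in $\eps$ of the discrete neighbourhood bound must be argued carefully by summing $|n|_*^{-(d-1)}$-type kernels over the relevant annuli in $\Z^d$; the hypothesis $d\ge 3$ enters exactly to make such sums converge. Everything else (pointwise asymptotics, Riemann sum convergence on the bulk, and the algebraic identification of the limit with $\sigma_g^2$) is routine once this estimate is in place.
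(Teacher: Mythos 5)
Your proposal is correct and follows essentially the same route as the paper: replace each discrete gradient $\nabla_i G_\h$ by $\partial_{x_i}\cG_\h$ via the comparison estimate $|\nabla_i G_\h(x)-\partial_{x_i}\cG_\h(x)|\les |x|^{-d}$, use the homogeneity of $\cG_\h$ to absorb the powers of $\eps$, control the replacement errors (including the near-diagonal terms) by the same convolution estimates underlying Lemma~\ref{l:conVre}, and identify the limit as the $\eps^{5d}$-weighted Riemann sum converging to the integral defining $\sigma_g^2$. Your bulk/neighbourhood splitting simply fills in the Riemann-sum convergence for the singular integrand, which the paper states more briefly, so no further comparison is needed.
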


In the following, we assume $|g|,|f|\leq h$ for some $h\in\C_c(\R^d)$.

\begin{proof}[Proof of Lemma~\ref{l:conVre}]
By denoting the number of different elements in $\{x,y,z,w\}$ by $s$, we decompose $\sum_{x,y,z,w\in \eps\Z^d}=\sum1_{s=1}+\sum1_{s=2}+\sum1_{s=3}+\sum1_{s=4}$. The following estimates are obtained with an application of Lemma~\ref{l:erCov}.

When $s=1$,
\begin{equation*}
\eps^{d+4}\sum1_{s=1}\mathcal{E}(\frac{x}{\eps},\frac{y}{\eps},\frac{z}{\eps},\frac{w}{\eps})|g(x)f(y)g(z)f(w)|\les \eps^{d+4}\sum_{x\in\eps\Z^d}h(x)^4.
\end{equation*}
Since $h\in \C_c(\R^d)$, $\eps^d\sum_{x\in \eps \Z^d} h(x)^4\to \int_{\R^d}h(x)^4dx$ which is bounded, so we have the r.h.s. of the above display goes to zero as $\eps\to 0$.

When $s=2$, 
\begin{equation*}
\begin{aligned}
&\eps^{d+4}\sum1_{s=2}\mathcal{E}(\frac{x}{\eps},\frac{y}{\eps},\frac{z}{\eps},\frac{w}{\eps})|g(x)f(y)g(z)f(w)| \\
\les &\eps^{d+4}\sum_{x \neq y \in \eps \Z^d} h(x)^2h(y)^2|\frac{x}{\eps}-\frac{y}{\eps}|_*^{2-2d}+h(x)^3h(y)|\frac{x}{\eps}-\frac{y}{\eps}|_*^{1-d}.
\end{aligned}
\end{equation*}
For $x\neq 0 \in \Z^d$, we have $|x|_*> |x| \geq 1$, so the r.h.s. of the above display is bounded by
\[
\begin{aligned}
&\eps^{d+4}\sum_{x \neq y \in \eps \Z^d} (h(x)^2h(y)^2+h(x)^3h(y))|\frac{x}{\eps}-\frac{y}{\eps}|_*^{1-d}\\
\leq & \eps^{2d+3}\sum_{x \neq y \in \eps \Z^d} (h(x)^2h(y)^2+h(x)^3h(y))|x-y|^{1-d}.
\end{aligned}
\]
Similarly, $\eps^{2d}\sum_{x \neq y \in \eps \Z^d} (h(x)^2h(y)^2+h(x)^3h(y))|x-y|^{1-d}$ converges as a Riemann sum, which implies
\[
\eps^{2d+3}\sum_{x \neq y \in \eps \Z^d} (h(x)^2h(y)^2+h(x)^3h(y))|x-y|^{1-d}\sim \eps^3\to 0
\]
as $\eps\to 0$.

The discussion for $s=3,4$ is similar to $s=2$, so we omit the details.

When $s=3$,
\begin{equation*}
\begin{aligned}
&\eps^{d+4}\sum1_{s=3}\mathcal{E}(\frac{x}{\eps},\frac{y}{\eps},\frac{z}{\eps},\frac{w}{\eps})|g(x)f(y)g(z)f(w)|\\
\les & \eps^{3d+2}\sum_{\substack{x,y,z \in \eps \Z^d \\|\{x,y,z\}| = 3}}h(x)^2h(y)h(z)\frac{1}{|x-y|^{d-1}}\left(\frac{1}{|x-z|^{d-1}}+\frac{1}{|y-z|^{d-1}}\right)\sim \eps^2.
\end{aligned}
\end{equation*}

When $s=4$,
\begin{equation*}
\begin{aligned}
&\eps^{d+4}\sum1_{s=4}\mathcal{E}(\frac{x}{\eps},\frac{y}{\eps},\frac{z}{\eps},\frac{w}{\eps})|g(x)f(y)g(z)f(w)|\\
\les & \eps^{(4d+1)-}\sum1_{s=4}h(x)h(y)h(z)h(w) \frac{1}{|x-y|^{(d-1)-}}\frac{1}{|x-z|^{(d-1)-}}\frac{1}{|x-w|^{(d-1)-}}\sim \eps^{1-}.
\end{aligned}
\end{equation*}
The proof is complete.
\end{proof}

\begin{proof}[Proof of Lemma~\ref{l:conVma}]
Recall that 
\begin{equation*}
\mathcal{K}(x,y,z,w)=\sum_{i,j,k,l=1}^d\tilde{K}_{ijkl}\sum_{v\in \Z^d}\nabla_iG_\h(v-x)\nabla_jG_\h(v-y)\nabla_kG_\h(v-z)\nabla_lG_\h(v-w).
\end{equation*}
By defining $\mathcal{F}_{ijkl}(v,x,y,z,w):=\nabla_iG_\h(v-x)\nabla_jG_\h(v-y)\nabla_kG_\h(v-z)\nabla_lG_\h(v-w)$, we only need to show the convergence of 
\begin{equation*}
I_{ijkl}=\eps^{d+4}\sum_{x,y,z,w\in \eps\Z^d}\sum_{v\in \Z^d}\mathcal{F}_{ijkl}(v,\frac{x}{\eps},\frac{y}{\eps},\frac{z}{\eps},\frac{w}{\eps})g(x)f(y)g(z)f(w)
\end{equation*}
for fixed $i,j,k,l$.

We claim that $\nabla_iG_\h(v-\frac{x}{\eps})$ can be replaced by $\partial_{x_i}\cG_\h(v-\frac{x}{\eps})$ in $\mathcal{F}_{ijkl}(v,\frac{x}{\eps},\frac{y}{\eps},\frac{z}{\eps},\frac{w}{\eps})$ of the above expression with the sum over $v\neq x/\eps$. Indeed, by \cite[Proposition~A.3]{mourrat2014correlation}, for $x\neq 0$, 
\begin{equation*}
|\nabla_iG_\h(x)-\partial_{x_i} \cG_\h(x)|\les |x|^{-d}.
\end{equation*}
If we define $\mathcal{F}_{jkl}^i(v,x,y,z,w):=\partial_{x_i}\cG_\h(v-x)\nabla_jG_\h(v-y)\nabla_kG_\h(v-z)\nabla_lG_\h(v-w)$, the error induced by the replacement can be estimated as
\begin{equation*}
|I_{ijkl}-\eps^{d+4}\sum_{x,y,z,w\in \eps\Z^d}\sum_{v\neq \frac{x}{\eps}}\mathcal{F}_{jkl}^i(v,\frac{x}{\eps},\frac{y}{\eps},\frac{z}{\eps},\frac{w}{\eps})g(x)f(y)g(z)f(w)|\les J_1+J_2,
\end{equation*}
with
\begin{equation*}
J_1=\eps^{d+4}\sum_{x,y,z,w\in \eps\Z^d}|\mathcal{F}_{ijkl}(\frac{x}{\eps},\frac{x}{\eps},\frac{y}{\eps},\frac{z}{\eps},\frac{w}{\eps})|h(x)h(y)h(z)h(w).
\end{equation*}
and 
\begin{equation*}
J_2=\eps^{d+4}\sum_{x,y,z,w\in \eps\Z^d}\sum_{v\neq \frac{x}{\eps}}|v-\frac{x}{\eps}|^{-d}|v-\frac{y}{\eps}|_*^{1-d}|v-\frac{z}{\eps}|_*^{1-d}|v-\frac{w}{\eps}|_*^{1-d}h(x)h(y)h(z)h(w).
\end{equation*}
For $J_1$, by using $|\nabla_i G_\h(x)|\les |x|_*^{1-d}$ and considering different cases according to whether $y,z,w=x$ as in the proof of Lemma~\ref{l:conVre}, we obtain
\begin{equation*}
J_1\les \eps^{d+4}\sum_{x,y,z,w\in \eps\Z^d}|\frac{x-y}{\eps}|_*^{1-d}|\frac{x-z}{\eps}|_*^{1-d}|\frac{x-w}{\eps}|_*^{1-d}h(x)h(y)h(z)h(w)\to 0.
\end{equation*}
For $J_2$, we note that $\sum_{v\neq \frac{x}{\eps}}|v-\frac{x}{\eps}|^{-d}|v-\frac{y}{\eps}|_*^{1-d}|v-\frac{z}{\eps}|_*^{1-d}|v-\frac{w}{\eps}|_*^{1-d}\les \mathcal{E}(\frac{x}{\eps},\frac{y}{\eps},\frac{z}{\eps},\frac{w}{\eps})$ with $\mathcal{E}(x,y,z,w)$ defined in \eqref{ercoEx}, so we can apply Lemma~\ref{l:conVre} to show $J_2\to 0$. The claim is proved.

By following the same argument for $\nabla_jG_\h(v-\frac{y}{\eps}), \nabla_k G_\h(v-\frac{z}{\eps}), \nabla_l G_\h(v-\frac{w}{\eps})$, we derive
\begin{equation*}
|I_{ijkl}-\eps^{d+4}\sum_{x,y,z,w\in \eps\Z^d} \sum_{v\neq \frac{x}{\eps},\frac{y}{\eps},\frac{z}{\eps},\frac{w}{\eps}} \mathcal{F}^{ijkl}(v,\frac{x}{\eps},\frac{y}{\eps},\frac{z}{\eps},\frac{w}{\eps}) g(x)f(y)g(z)f(w)|\to 0,
\end{equation*}
with $\mathcal{F}^{ijkl}(v,x,y,z,w):=\partial_{x_i}\cG_\h(v-x)\partial_{x_j}\cG_\h(v-y)\partial_{x_k}\cG_\h(v-z)\partial_{x_l}\cG_\h(v-w)$. Since $d\geq 3$, $\cG_\h(x)=c_\h|x|^{2-d}$ for some constant $c_\h$, and $\partial_{x_i}\cG_\h(x)=c_\h(2-d)x_i/|x|^d$, so we have
\begin{equation*}
\begin{aligned}
&\eps^{d+4}\sum_{x,y,z,w\in \eps\Z^d} \sum_{v\neq \frac{x}{\eps},\frac{y}{\eps},\frac{z}{\eps},\frac{w}{\eps}} \mathcal{F}^{ijkl}(v,\frac{x}{\eps},\frac{y}{\eps},\frac{z}{\eps},\frac{w}{\eps}) g(x)f(y)g(z)f(w)\\
=&\eps^{5d}\sum_{x,y,z,w,v\in \eps\Z^d} 1_{v\neq x,y,z,w} 
\mathcal{F}^{ijkl}(v,x,y,z,w)g(x)f(y)g(z)f(w)\\
\to &\int_{\R^{5d}}\mathcal{F}^{ijkl}(v,x,y,z,w)g(x)f(y)g(z)f(w)dxdydzdwdv.
\end{aligned}
\end{equation*}
The proof is complete.
\end{proof}

\section{Convergence to a Gaussian when \texorpdfstring{$\sigma_g^2>0$}{sigma > 0}}
\label{s:cg}

Recall that in order to prove \eqref{conN}, that is,
\begin{equation*}
\frac{\U_\eps^{(f)}(g)-\E\{\U_\eps^{(f)}(g)\}}{\sqrt{\var\{\U_\eps^{(f)}(g)\}}}\Rightarrow N(0,1),
\end{equation*}
we only need to show Lemma~\ref{l:k0k3}. 
\begin{proof}[Proof of Lemma~\ref{l:k0k3}]
We first prepare the ground by estimating the terms appearing in the definition of $\kappa^2$. 
By a direct calculation, we have
\begin{equation*}
\partial_e\U_\eps^{(f)}(g)=-\eps^{d+2}\sum_{x,y\in \Z^d}\partial_e a_e \nabla G(x,e)\nabla G(y,e) f(\eps y)g(\eps x),
\end{equation*}
and
\begin{equation*}
\begin{aligned}
\partial_{e'}\partial_e \U_\eps^{(f)}(g)=&-\eps^{d+2}\sum_{x,y\in\Z^d}\partial_e^2a_e\nabla G(x,e)\nabla G(y,e) f(\eps y)g(\eps x)1_{e'=e}\\
&+\eps^{d+2}\sum_{x,y\in\Z^d}\partial_ea_e\partial_{e'}a_{e'}\nabla G(x,e')\nabla\nabla G(e,e')\nabla G(y,e) f(\eps y)g(\eps x)\\
&+\eps^{d+2}\sum_{x,y\in\Z^d}\partial_ea_e\partial_{e'}a_{e'}\nabla G(x,e)\nabla G(y,e')\nabla\nabla G(e,e')f(\eps y)g(\eps x).
\end{aligned}
\end{equation*}
By Proposition~\ref{p:esGr} and the fact that $a_e=\eta(\zeta_e)$ with $|\eta'|,|\eta''|$ uniformly bounded, we have for any $p\geq 1$ (with the multiplicative constant depending on $p$):
\begin{equation*}
\|\partial_e \U_\eps^{(f)}(g)\|_p\les \eps^{d+2}\sum_{x,y\in\Z^d}\frac{1}{|x-\underline{e}|_*^{d-1}}\frac{1}{|y-\underline{e}|_*^{d-1}}|f(\eps y)g(\eps x)|,
\end{equation*}
and
\begin{equation*}
\begin{aligned}
\|\partial_{e'}\partial_e \U_\eps^{(f)}(g)\|_p\les &\eps^{d+2}\sum_{x,y\in\Z^d}\frac{1}{|x-\underline{e}|_*^{d-1}}\frac{1}{|y-\underline{e}|_*^{d-1}}|f(\eps y)g(\eps x)|1_{e'=e}\\
&+\eps^{d+2}\sum_{x,y\in\Z^d}\frac{1}{|x-\underline{e'}|_*^{d-1}}\frac{1}{|\underline{e}-\underline{e'}|_*^d}\frac{1}{|y-\underline{e}|_*^{d-1}}|f(\eps y)g(\eps x)|\\
&+\eps^{d+2}\sum_{x,y\in\Z^d}\frac{1}{|x-\underline{e}|_*^{d-1}}\frac{1}{|\underline{e}-\underline{e'}|_*^d}\frac{1}{|y-\underline{e'}|_*^{d-1}}|f(\eps y)g(\eps x)|.\end{aligned}
\end{equation*}
Since $f,g$ are both bounded and compactly supported, we apply Lemma~\ref{l:MN} to obtain
\begin{equation}
\|\partial_e \U_\eps^{(f)}(g)\|_p\les \eps^{d+2} \left(\sum_{x\in\Z^d}\frac{1}{|x-\underline{e}|_*^{d-1}}1_{|x|\les \eps^{-1}}\right)^2\les
\frac{\eps^{d}}{|\eps \underline{e}|_*^{2d-2}},
\label{peUeps}
\end{equation}
and
\begin{equation}
\begin{aligned}
\|\partial_{e'}\partial_e \U_\eps^{(f)}(g)\|_p\les &\eps^{d+2} \left(\sum_{x\in\Z^d}\frac{1}{|x-\underline{e}|_*^{d-1}}1_{|x|\les \eps^{-1}}\right)^2(1_{e'=e}+\frac{1}{|\underline{e}-\underline{e'}|_*^d})\\
\les &\frac{\eps^{d}}{|\eps \underline{e}|_*^{2d-2}} \ \frac{1}{|\underline{e}-\underline{e'}|_*^d}.
\end{aligned}
\label{ppeUeps}
\end{equation}
We are now ready to estimate $\kappa^2$. By \eqref{peUeps} and \eqref{ppeUeps}, we have
\begin{equation*}
\begin{aligned}
\kappa^2&=\sum_{e'\in\B}\left(\sum_{e\in\B}\|\partial_{e'}\partial_e\U_\eps^{(f)}(g)\|_4\|\partial_e\U_\eps^{(f)}(g)\|_4\right)^2\\
&\les\sum_{e'\in\B}\left(\sum_{e\in\B}\frac{\eps^{d}}{|\eps \underline{e}|_*^{2d-2}}\ \frac{1}{|\underline{e}-\underline{e'}|_*^d} \ \frac{\eps^{d}}{|\eps \underline{e}|_*^{2d-2}}\right)^2.
\end{aligned}
\end{equation*}
Applying Lemma~\ref{l:MN}, we get
\begin{equation*}
\begin{aligned}
\sum_{e'\in\B}\left(\sum_{e\in\B}\frac{\eps^{2d}}{|\eps\underline{e}|_*^{4d-4}}\frac{1}{|\underline{e}-\underline{e'}|_*^d}\right)^2\les& \sum_{e'\in\B}\eps^{4d}|\log\eps|^2\frac{1}{|\eps \underline{e'}|_*^{2d}}\\
\les &\eps^{4d}|\log\eps|^2\sum_{x\in\Z^d}\frac{1}{(2+|\eps x|)^{2d}}\\
\les &\eps^{3d}|\log\eps|^2.
\end{aligned}
\end{equation*}
To sum up, $\kappa^2\les \eps^{3d}|\log\eps|^2$. By \eqref{conV}, $\var\{\U_\eps^{(f)}(g)\}\gtrsim \eps^d$ if $\sigma_g^2\neq 0$, which leads to
\begin{equation*}
\frac{\kappa^2}{\var^2\{\U_\eps^{(f)}(g)\}}\les \frac{\eps^{3d}|\log\eps|^2}{\eps^{2d}}\les \eps^d|\log\eps|^2\to 0,
\end{equation*}
and the proof is complete.
\end{proof}

\section{Tightness in \texorpdfstring{$\C_{\loc}^\alpha$}{C alpha}}
\label{s:tt}

Roughly speaking, for $\alpha <0$, a distribution $F$ is $\alpha$-Hölder regular around the point $x \in \R^d$ if for every smooth, compactly supported test function $\chi$, we have
\begin{equation}
\label{e:behav}
  F[\eps^{-d} \,\chi(\eps^{-1} ( \, \cdot \, - x))] \lesssim \eps^{-\alpha} \qquad (\eps \to 0).
\end{equation}
By \cite[Theorem~2.25]{mourrat2015tight}, in order to prove that $\mathscr{U}_\eps^{(f)}$ is tight in $\C_{\loc}^\alpha$ for every $\alpha<1-\frac{d}{2}$, we only need to prove the following proposition.

\begin{prop}
For any $g\in \C_c(\R^d)$, let $g_\lambda(x)=\lambda^{-d}g(x/\lambda)$. For all $p\geq 1$, there exists a constant $C=C(p,g)$ such that 
\begin{equation*}
\|\mathscr{U}_\eps^{(f)}(g_\lambda)\|_p\leq C\lambda^{1-\frac{d}{2}},
\end{equation*}
\end{prop}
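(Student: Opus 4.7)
The plan is to reduce the bound on $\|\mathscr{U}_\eps^{(f)}(g_\lambda)\|_p = \eps^{-d/2}\|\U_\eps^{(f)}(g_\lambda) - \E\U_\eps^{(f)}(g_\lambda)\|_p$ to first vertical derivatives via the $L^p$ version of the Gaussian Poincar\'e (Meyer) inequality. For any smooth functional $F$ of the i.i.d.\ Gaussian family $(\zeta_e)_{e\in\B}$ and any $p\geq 2$,
\[
\|F-\E F\|_p \;\lesssim_p\; \Ll\|\Ll(\sum_{e\in\B}(\partial_e F)^2\Rr)^{1/2}\Rr\|_p \;\leq\; \sqrt{\sum_{e\in\B}\|\partial_e F\|_p^2},
\]
the first step being Meyer's inequality (a consequence of LSI / hypercontractivity of the Ornstein--Uhlenbeck semigroup) and the second Minkowski's (valid for $p\geq 2$). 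Thus it suffices to establish $\sum_e\|\partial_e\U_\eps^{(f)}(g_\lambda)\|_p^2\lesssim\eps^d\lambda^{2-d}$, which upon multiplication by $\eps^{-d}$ yields the desired $\lambda^{1-d/2}$ scaling.

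\emph{Single-edge derivative estimate.} By Lemma~\ref{l:vd}, Proposition~\ref{p:esGr} applied with exponent $2p$ (combined with Cauchy--Schwarz), and the uniform bound on $\eta'$,
\[
\|\partial_e \U_\eps^{(f)}(g_\lambda)\|_p \;\lesssim\; \eps^{d+2}\sum_{x,y\in\Z^d}\frac{|g_\lambda(\eps x)|\,|f(\eps y)|}{|x-\underline{e}|_*^{d-1}\,|y-\underline{e}|_*^{d-1}}.
\]
The $y$-sum is controlled exactly as in Section~\ref{s:cg} (Lemma~\ref{l:MN}), giving $\lesssim\eps^{-1}(1+\eps|\underline{e}|)^{1-d}$. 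For the $x$-sum against the dilated $g_\lambda(\eps x) = \lambda^{-d}g(\eps x/\lambda)$, a change of variables $x=(\lambda/\eps)x'$ followed by splitting into $\eps|\underline{e}|\leq\lambda$ (where the kernel sees the support of $g_\lambda$) and $\eps|\underline{e}|\geq\lambda$ (where $|x-\underline{e}|\sim|\underline{e}|$ on the support) yields $\lesssim\eps^{-1}\lambda^{1-d}(1+\eps|\underline{e}|/\lambda)^{1-d}$. Combining:
\[
\|\partial_e \U_\eps^{(f)}(g_\lambda)\|_p \;\lesssim\; \frac{\eps^d\lambda^{1-d}}{(1+\eps|\underline{e}|)^{d-1}(1+\eps|\underline{e}|/\lambda)^{d-1}}.
\]
Squaring, summing over $\underline{e}\in\Z^d$, approximating the sum by an integral in $y=\eps\underline{e}$, and splitting into $|y|\leq\lambda$, $\lambda\leq|y|\leq 1$, $|y|\geq 1$, each regime contributes $\lesssim\eps^{-d}\lambda^d$ using $d\geq 3$ (the dominant one is $|y|\leq\lambda$, and the middle regime is the one that requires $d\geq 3$ for convergence of $\int_\lambda^1 r^{1-d}\,dr\sim\lambda^{2-d}$). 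Hence $\sum_e\|\partial_e\U_\eps^{(f)}(g_\lambda)\|_p^2\lesssim \eps^{2d}\lambda^{2-2d}\cdot\eps^{-d}\lambda^d= \eps^d\lambda^{2-d}$, and the $L^p$-Poincar\'e inequality above gives $\|\U-\E\U\|_p\lesssim_p\eps^{d/2}\lambda^{1-d/2}$, concluding the proof.

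\emph{Main technical point.} The delicate step is the $x$-sum against the dilated test function $g_\lambda$. A naive bound via $\|g_\lambda\|_\infty\lesssim\lambda^{-d}$ times the volume of its support would yield the wrong $\lambda$-exponent; the correct gain comes from the convolution structure of the singular kernel $|\cdot|_*^{1-d}$ (from Proposition~\ref{p:esGr}) against $g_\lambda$, reflecting the subcritical regularity of the limiting Gaussian field. Tracking the scale $\lambda$ explicitly in the tails to obtain the $(1+\eps|\underline{e}|/\lambda)^{-(d-1)}$ factor, together with the fact that $d\geq 3$ makes the middle-scale integral convergent, is what produces the correct $\lambda^{2-d}$ variance scaling. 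The bound is uniform in $\eps$ for $\lambda$ in the relevant range $[\eps,1]$, which is exactly what is needed to apply \cite[Theorem~2.25]{mourrat2015tight} and conclude tightness in $\C_{\loc}^\alpha$ for every $\alpha<1-d/2$.
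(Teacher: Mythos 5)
Your proposal is correct, and the core quantitative work is the same as the paper's: the single-edge bound obtained from Lemma~\ref{l:vd}, Proposition~\ref{p:esGr} and Lemma~\ref{l:MN}, namely $\|\partial_e \U_\eps^{(f)}(g_\lambda)\|_p \les \eps^d\lambda^{1-d}|\eps\underline{e}|_*^{1-d}|\tfrac{\eps}{\lambda}\underline{e}|_*^{1-d}$, and the subsequent summation over edges (the paper simply drops the factor $|\eps\underline{e}|_*^{-(2d-2)}\le 1$ rather than splitting into the three regimes, but the outcome is the same and both use $2d-2>d$, i.e.\ $d\ge 3$). Where you genuinely differ is the reduction from $L^p$ to first vertical derivatives: you invoke the Gaussian $L^p$ Poincar\'e (Pisier/Meyer) inequality $\|F-\E F\|_p\lesssim_p\|(\sum_e(\partial_eF)^2)^{1/2}\|_p$ followed by Minkowski in $L^{p/2}$, giving a one-shot bound $\|F-\E F\|_p\lesssim_p(\sum_e\|\partial_eF\|_p^2)^{1/2}$; the paper instead follows \cite[Proposition~3.1]{MN} and runs a dyadic induction on moments, writing $\E\{X^{2n}\}\le \E\{X^n\}^2+\var\{X^n\}$ and controlling $\var\{X^n\}$ by the $L^2$ spectral gap inequality applied to $X^n$, together with H\"older. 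Your route is shorter and avoids the induction, at the price of importing an inequality not stated in the paper (valid here since the $\zeta_e$ are i.i.d.\ Gaussians, with the usual approximation by cylindrical functionals, and with $1\le p<2$ recovered from $p=2$ by monotonicity of norms); the paper's inductive scheme keeps the toolbox minimal and relies only on the $L^2$ spectral gap inequality already used elsewhere, which also makes it robust beyond the Gaussian setting. Your closing caveat about uniformity in $\lambda$ down to the lattice scale matches the implicit use of Lemma~\ref{l:MN} with parameter $\eps/\lambda$ in the paper's own proof, so there is no gap relative to the paper on that point.
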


\begin{proof}
We follow the proof of \cite[Proposition~3.1]{MN}. 
Recall that $\mathscr{U}_\eps^{(f)}(g)=\eps^{-\frac{d}{2}}(\U_\eps^{(f)}(g)-\E\{\U_\eps^{(f)}(g)\})$, by writing
\begin{equation*}
\mathscr{U}_\eps^{(f)}(g_\lambda)\lambda^{\frac{d}{2}-1}=\frac{\eps^{\frac{d}{2}+2}}{\lambda^{\frac{d}{2}+1}}\sum_{x\in \Z^d}\sum_{y\in\Z^d}(G(x,y)-\E\{G(x,y)\})f(\eps y)g(\frac{\eps x}{\lambda})=:X_{\eps,\lambda},
\end{equation*}
and we need to show that $X_{\eps,\lambda}$ is uniformly bounded in $L^p(\Omega)$. Since $\E\{X_{\eps,\lambda}\}=0$, in particular
\begin{equation}
\Ll(\E\{X_{\eps,\lambda}^p\}\Rr)^2\les 1
\label{pftight}
\end{equation}
 holds for $p=1$. We argue inductively, assuming that \eqref{pftight} holds for some $p=n$, and showing that it also holds for $p=2n$, which would complete the proof.

Since $\E\{X_{\eps,\lambda}^{2n}\}=\E\{X_{\eps,\lambda}^n\}^2+\var\{X_{\eps,\lambda}^n\}\les 1+\var\{X_{\eps,\lambda}^n\}$, it suffices to show $\var\{X_{\eps,\lambda}^n\}\les \E\{X_{\eps,\lambda}^{2n}\}^{1-\frac{1}{n}}$. By the spectral gap inequality (see \cite[Corollary 3.3]{mourrat2014correlation}), we have
\begin{equation*}
\begin{aligned}
\var\{X_{\eps,\lambda}^n\}\leq &\sum_{e\in\B}\E\{|\partial_e X_{\eps,\lambda}^n|^2\}=\sum_{e\in\B}\E\{|nX_{\eps,\lambda}^{n-1}\partial_e X_{\eps,\lambda}|^2\}\\
\les & \E\{|X_{\eps,\lambda}|^{2n}\}^{1-\frac{1}{n}}\sum_{e\in\B} \E\{|\partial_e X_{\eps,\lambda}|^{2n}\}^{\frac{1}{n}}.
\end{aligned}
\end{equation*}
So we are left to prove $\sum_{e\in\B}\E\{|\partial_e X_{\eps,\lambda}|^{2n}\}^{\frac{1}{n}}\les 1$. Since
\begin{equation*}
\partial_e X_{\eps,\lambda}=-\frac{\eps^{\frac{d}{2}+2}}{\lambda^{\frac{d}{2}+1}}\sum_{x,y\in \Z^d}\partial_e a_e \nabla G(x,e)\nabla G(y,e)f(\eps y)g(\frac{\eps x}{\lambda})
\end{equation*}
and $f,g\in\C_c(\R^d)$, by applying Proposition~\ref{p:esGr} and Lemma~\ref{l:MN} we obtain
\begin{equation*}
\begin{aligned}
\|\partial_e X_{\eps,\lambda}\|_{2n}\les &\frac{\eps^{\frac{d}{2}+2}}{\lambda^{\frac{d}{2}+1}}\sum_{x\in\Z^d}\frac{1}{|x-\underline{e}|_*^{d-1}}1_{|x|\les \frac{\lambda}{\eps}}\sum_{y\in\Z^d}\frac{1}{|y-\underline{e}|_*^{d-1}}1_{|y|\les \frac{1}{\eps}}\\
\les &\frac{\eps^{\frac{d}{2}+2}}{\lambda^{\frac{d}{2}+1}}\frac{\lambda}{\eps}\frac{1}{|\frac{\eps}{\lambda}\underline{e}|_*^{d-1}}\frac{1}{\eps}\frac{1}{|\eps e|_*^{d-1}}=\left(\frac{\eps}{\lambda}\right)^{\frac{d}{2}}\frac{1}{|\frac{\eps}{\lambda}\underline{e}|_*^{d-1}}\frac{1}{|\eps e|_*^{d-1}},
\end{aligned}
\end{equation*}
which implies 
\begin{equation*}
\sum_{e\in\B}\E\{|\partial_e X_{\eps,\lambda}|^{2n}\}^{\frac{1}{n}}\les \sum_{e\in\B}\left(\frac{\eps}{\lambda}\right)^{d}\frac{1}{|\frac{\eps}{\lambda}\underline{e}|_*^{2d-2}}\frac{1}{|\eps e|_*^{2d-2}}\les\sum_{e\in\B}\left(\frac{\eps}{\lambda}\right)^{d}\frac{1}{|\frac{\eps}{\lambda}\underline{e}|_*^{2d-2}} \les1.
\end{equation*}
The proof is complete.
\end{proof}

\appendix

\section{Technical lemmas}

\begin{lem}[vertical derivative of $G(x,y)$ with respect to $\zeta_e$]
For $e\in \B,x,y\in \Z^d,\omega\in\Omega$, we have
\begin{equation}
\partial_e G(x,y)=-\partial_e a_e\nabla G(x,e)\nabla G(y,e).
\end{equation}
\label{l:vd}
\end{lem}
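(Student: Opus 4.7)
The plan is to differentiate the defining equation of $G$ with respect to $\zeta_e$, exploiting the fact that the edge $e$ is the only place where $\zeta_e$ enters the coefficients. Write $L = \nabla^* \tilde{a} \nabla$, so that $G(\cdot, y) = L^{-1}\delta_y$, i.e.\ $L G(\cdot, y) = \delta_y$. Among $(a_{e'})_{e'\in\B}$, only $a_e = \eta(\zeta_e)$ depends on $\zeta_e$, smoothly, and since $\eta$ is bounded above and below by positive constants with bounded first derivative, the classical partial derivative of $G(x,y)$ with respect to $\zeta_e$ exists and coincides with the weak derivative $\partial_e$ defined in \eqref{eq:defwd}; this integrability is a consequence of the uniform ellipticity and the pointwise Green function bounds recalled after Proposition~\ref{p:esCo}.

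The next step is to identify $\partial_e L$. The easiest route is through the Dirichlet form: for well-behaved $f, g$,
\[
\langle g, L f\rangle = \sum_{e'\in\B} a_{e'} \nabla f(e') \nabla g(e'),
\]
so $\partial_e L$ is supported at the single edge $e$ and, as an operator acting on $f$, satisfies
\[
(\partial_e L) f = (\partial_e a_e) \, \nabla f(e) \, (\delta_{\bar e} - \delta_{\underline e}),
\]
where $\delta_x$ denotes the function equal to $1$ at $x$ and $0$ elsewhere. (One may alternatively read this off directly from $L f(x) = \sum_i \nabla_i^*(\tilde a_i \nabla_i f)(x)$ by inspecting which terms contain $a_e$.)

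Now differentiating the identity $L G(\cdot, y) = \delta_y$ in $\zeta_e$ gives $L\bigl(\partial_e G(\cdot, y)\bigr) = -(\partial_e L) G(\cdot, y)$. Applying $L^{-1}$ and evaluating at $x$ yields
\[
\partial_e G(x, y) = -(\partial_e a_e) \nabla G(y, e)\bigl[G(x,\bar e) - G(x, \underline e)\bigr] = -(\partial_e a_e) \nabla G(x, e) \nabla G(y, e),
\]
where we used $G(x,\bar e) - G(x,\underline e) = \nabla G(x, e)$ (by the convention for $\nabla G(x,\cdot)$) together with the symmetry $G(u,v) = G(v,u)$ of the Green function of the self-adjoint operator $L$ to rewrite $\nabla G(\cdot, y)(e)$ as $\nabla G(y,e)$. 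There is no serious obstacle; the only point worth a line of justification is the interchange of $\partial_e$ and $L^{-1}$, which is immediate from uniform ellipticity and the decay estimates on $G$ already cited in the paper.
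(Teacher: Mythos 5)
Your proof is correct, and it takes a genuinely different (and more direct) route than the paper. You differentiate the elliptic identity $LG(\cdot,y)=\delta_y$ in $\zeta_e$, observe via the Dirichlet form that $\partial_e L$ is the rank-one-type perturbation $(\partial_e L)f=(\partial_e a_e)\,\nabla f(e)\,(\delta_{\bar e}-\delta_{\underline e})$, and then invert $L$ and use the symmetry $G(u,v)=G(v,u)$; your identification of $\partial_e L$ and the sign bookkeeping check out against the paper's conventions ($\nabla^*_i g(x)=g(x-e_i)-g(x)$ and $\nabla G(x,e)$ being the gradient of $G(x,\cdot)$ at $e$). The paper instead goes through the parabolic representation $G(x,y)=\int_0^\infty q_t(x,y)\,dt$: it differentiates the heat equation for $q_t$, solves by Duhamel to get $\partial_e q_t(x,y)=-\partial_e a_e\int_0^t\nabla q_{t-s}(x,e)\nabla q_s(e,y)\,ds$, and integrates in time, using the symmetry of $q_t$. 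The two arguments are morally the same computation (differentiate the defining equation of $G$); the semigroup route trades your step ``apply $L^{-1}$ to the differentiated equation'' for a time integral, which sidesteps having to argue that $\partial_e G(\cdot,y)$ is the unique decaying solution of the perturbed equation. In your version that interchange is the one step deserving a real justification: the cleanest way is to work with a finite perturbation $a_e\mapsto a_e+h$ and the resolvent identity $G^h(x,y)-G(x,y)=-h\,\nabla G^h(x,e)\,\nabla G(y,e)$, then let $h\to0$ and compose with $a_e=\eta(\zeta_e)$; this simultaneously proves the classical differentiability you assume and identifies the derivative, after which its coincidence with the weak derivative of \eqref{eq:defwd} follows from the moment bounds, as you note. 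With that small amendment your argument is complete and arguably more elementary than the paper's.
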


\begin{proof}
Fix $e,x,y,\zeta$. By definition, the Green function $G(x,y)=\int_0^\infty q_t(x,y) \, dt$ with the heat kernel $q_t(z,y)$ solving the following parabolic problem
\begin{equation*}
\partial_t q_t(z,y)=-\nabla^*\tilde{a}(z)\nabla q_t(z,y), z\in \Z^d,
\end{equation*}
with initial condition $q_0(z,y)=1_{z=y}$. We take $\partial_e$ on both sides of the above equation to obtain
\begin{equation*}
\partial_t \partial_e q_t(z,y)=-\nabla^*\tilde{a}(z)\nabla \partial_e q_t(z,y)+\partial_e a_e\nabla q_t(e,y)(1_{z=\underline{e}}-1_{z=\bar{e}}),
\end{equation*}
with initial condition $\partial_e q_0(z,y)=0$. So $\partial_e q_t(x,y)$ is given by
\begin{equation*}
\begin{aligned}
\partial_e q_t(x,y)=&\int_0^t \sum_{z\in\Z^d} q_{t-s}(x,z)\partial_e a_e\nabla q_s(e,y)(1_{z=\underline{e}}-1_{z=\bar{e}})ds\\
=&-\partial_e a_e\int_0^t \nabla q_{t-s}(x,e)\nabla q_s(e,y)ds.
\end{aligned}
\end{equation*}
This leads to
\begin{equation*}
\begin{aligned}
\partial_e G(x,y)=\int_0^\infty \partial_e q_t(x,y)dt=&-\partial_e a_e\int_0^\infty \int_0^t \nabla q_{t-s}(x,e)\nabla q_s(e,y)dsdt\\
=&-\partial_e a_e \nabla G(x,e)\nabla G(y,e),
\end{aligned}
\end{equation*}
where we used the symmetry $q_t(x,y)=q_t(y,x)$ in the last step. The proof is complete.
\end{proof}

\begin{lem}[Spectral Gap Inequality to control fourth moment]
For any $f$ with $\E\{f\}=0$, 
\begin{equation}
\E\{f^4\}\les\left(\sum_{e\in\B} \sqrt{\E\{|f-f^e|^4\}}\right)^2.
\end{equation}
\label{l:SG}
\end{lem}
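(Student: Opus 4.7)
The plan is to combine the Efron--Stein form of the spectral gap inequality with the identity $\E\{f^4\}=\var(f^2)+(\E\{f^2\})^2$ (valid since $\E\{f\}=0$ gives $\E\{f^2\}=\var(f)$), handling each piece separately and then solving a quadratic inequality.

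\emph{Step 1: the variance of $f^2$.} Applying the Efron--Stein inequality (i.e.\ the resampling form of the spectral gap inequality for the i.i.d.\ product measure $\P$) to $f^2$ yields
\[
\var(f^2) \;\le\; \tfrac{1}{2}\sum_{e\in\B}\E\bigl\{(f^2-(f^e)^2)^2\bigr\}
\;=\;\tfrac{1}{2}\sum_{e\in\B}\E\bigl\{(f-f^e)^2(f+f^e)^2\bigr\}.
\]
By Cauchy--Schwarz, each summand is at most $\sqrt{\E\{(f-f^e)^4\}}\sqrt{\E\{(f+f^e)^4\}}$. Since $f^e$ has the same law as $f$, $\E\{(f+f^e)^4\}\le 8(\E\{f^4\}+\E\{(f^e)^4\})=16\,\E\{f^4\}$, so
\[
\var(f^2)\;\le\; 2\sqrt{\E\{f^4\}}\,\sum_{e\in\B}\sqrt{\E\{(f-f^e)^4\}}.
\]

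\emph{Step 2: the square of $\E\{f^2\}$.} Applying Efron--Stein to $f$ and then Jensen's inequality gives
\[
\E\{f^2\}=\var(f)\;\le\;\tfrac{1}{2}\sum_{e\in\B}\E\{(f-f^e)^2\}\;\le\;\tfrac{1}{2}\sum_{e\in\B}\sqrt{\E\{(f-f^e)^4\}},
\]
hence $(\E\{f^2\})^2\les \bigl(\sum_{e}\sqrt{\E\{(f-f^e)^4\}}\bigr)^2$.

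\emph{Step 3: closing the inequality.} Writing $M=\sqrt{\E\{f^4\}}$ and $A=\sum_{e}\sqrt{\E\{(f-f^e)^4\}}$, the two bounds combine to $M^2\les MA+A^2$, which implies $M\les A$ by solving the quadratic, and squaring yields the claim. The only genuine step is the Efron--Stein bound applied to $f^2$; once one sees the product $(f-f^e)(f+f^e)$ and uses Cauchy--Schwarz together with the equidistribution of $f$ and $f^e$, everything else is bookkeeping. I expect no real obstacle here beyond verifying that Efron--Stein is the right form of the spectral gap to invoke (the $\zeta_e$ are i.i.d., so resampling one coordinate at a time is the natural move, and the i.i.d.\ Gaussian structure is not actually used).
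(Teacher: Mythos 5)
Your proof is correct, but it follows a genuinely different route from the paper's. The paper invokes the quartic spectral-gap inequality of Gloria--Neukamm--Otto (cited as \cite[Lemma 2]{gloria2013quantification}), namely $\E\{f^4\}\les \E\{(\sum_{e}|f-\E_e\{f\}|^2)^2\}$ with $\E_e$ the conditional expectation given the other coordinates, then expands the square with Cauchy--Schwarz and finally bounds $\E\{|f-\E_e\{f\}|^4\}$ by $\tfrac12\E\{|f-f^e|^4\}$ through a binomial expansion in which the odd cross terms vanish after averaging over $\zeta_e$ (resp.\ $\zeta_e'$). You instead use only the ordinary second-moment Efron--Stein inequality, applied once to $f^2$ and once to $f$, together with the decomposition $\E\{f^4\}=\var(f^2)+(\E\{f^2\})^2$ and a self-bounding (quadratic absorption) step; all your individual estimates check out, including the constant chase $\E\{(f+f^e)^4\}\le 16\,\E\{f^4\}$ and the passage from $M^2\les MA+A^2$ to $M\les A$. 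What your argument buys is elementarity: no higher-order Poincar\'e/spectral-gap input is needed, only resampling one coordinate of an i.i.d.\ family (and Efron--Stein does extend to countably many coordinates, as you implicitly use). What it costs is the absorption step, which requires the a priori qualitative finiteness $\E\{f^4\}<\infty$ (otherwise $M^2\les MA+A^2$ is vacuous), whereas the paper's route yields the bound directly and, as stated in the cited lemma, generalizes to higher moments. In the paper's application this is harmless --- the lemma is applied to $\sigma_\lambda$ with $\lambda>0$ fixed, for which $\E\{\sigma_\lambda^4\}<\infty$ follows from $\|(\lambda-\Delta)^{-1}\|_{L^4\to L^4}\le\lambda^{-1}$ and the moment bounds on $D\phi$ --- but you should state this hypothesis (or add a truncation remark) to make the lemma as stated fully rigorous.
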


\begin{proof}
By \cite[Lemma 2]{gloria2013quantification}, if we define $\E_e\{f\}:=\E\{f|\{\zeta_{e'}\}_{e'\neq e}\}$, then
$$\E\{f^4\}\les \E\{(\sum_{e\in\B} |f-\E_e\{f\}|^2)^2\}.$$
 By expanding the right-hand side, we obtain
\begin{equation*}
\begin{aligned}
\E\{f^4\}\les &\sum_{e,e'\in\B}\E\{|f-\E_e\{f\}|^2|f-\E_{e'}\{f\}|^2\}\\
\leq &\sum_{e,e'\in\B}\sqrt{\E\{|f-\E_e\{f\}|^4\}\E\{|f-\E_{e'}\{f\}|^4\}}\\
=&\left(\sum_{e\in\B} \sqrt{\E\{|f-\E_e\{f\}|^4\}}\right)^2.
\end{aligned}
\end{equation*}
It thus suffices to show that $\E\{|f-\E_e\{f\}|^4\}\leq \frac12\E\{|f-f^e|^4\}$. In order to do so, we write $$f-f^e=f-\E_e\{f\}+\E_e\{f\}-f^e,$$ and observe that
\begin{equation*}
\begin{aligned}
\E\{|f-f^e|^4\}=&\E\{|f-\E_e\{f\}|^4\}+\E\{|\E_e\{f\}-f^e|^4\}+6\E\{|f-\E_e\{f\}|^2|\E_e\{f\}-f^e|^2\}\\
&+4\E\{(f-\E_e\{f\})(\E_e\{f\}-f^e)^3\}+4\E\{(f-\E_e\{f\})^3(\E_e\{f\}-f^e)\}.
\end{aligned}
\end{equation*}
By first averaging over $\zeta_e$ (resp.\ $\zeta_e'$), we see that the third (resp.\ fourth) term on the right-hand side is equal to zero, so the proof is complete.
\end{proof}

\begin{lem}[Sensitivity of gradient of correctors with respect to $\zeta_e$]
For $e\in \mathbb{B},x\in \Z^d$, $i,j=1,\ldots,d$ and $p\geq 1$, we have
\begin{equation}
\E\{|\nabla_j\tilde{\phi}_i(x)-\nabla_j\tilde{\phi}_i^e(x)|^p\}\les |x-\underline{e}|_*^{-pd}.
\end{equation}
\label{l:dge}
\end{lem}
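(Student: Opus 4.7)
The plan is to reduce the sensitivity estimate to the annealed gradient estimates of Proposition~\ref{p:esGr} via the Green function representation of the difference $w := \tilde{\phi}_i - \tilde{\phi}_i^e$. The key observation is that the conductance field $\tilde{a}(x,\zeta)$ depends on $\zeta_e$ only through a single edge: writing $e = (\underline{e}, \underline{e}+e_k)$, the matrix $\tilde{a}(x,\zeta)$ differs from $\tilde{a}(x,\zeta^e)$ only at $x=\underline{e}$, and the only affected diagonal entry is the $k$-th one, namely $\eta(\zeta_e)$ vs.\ $\eta(\zeta'_e)$.

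Subtracting the corrector equations $\nabla^*\tilde{a}(\nabla\tilde{\phi}_i+e_i)=0$ and $\nabla^*\tilde{a}^e(\nabla\tilde{\phi}_i^e+e_i)=0$ yields
\begin{equation*}
\nabla^*\tilde{a}\,\nabla w=-\nabla^*\bigl((\tilde{a}-\tilde{a}^e)(\nabla\tilde{\phi}_i^e+e_i)\bigr).
\end{equation*}
The vector field $(\tilde{a}-\tilde{a}^e)(\nabla\tilde{\phi}_i^e+e_i)$ is supported at the single site $\underline{e}$ in the $k$-th coordinate, with value $\alpha:=\bigl(\eta(\zeta_e)-\eta(\zeta'_e)\bigr)\bigl(\nabla_k\tilde{\phi}_i^e(\underline{e})+\delta_{ik}\bigr)$, so the right-hand side reduces to $\alpha\,(\mathbf{1}_{\underline{e}}-\mathbf{1}_{\bar{e}})$.

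Since the corrector itself is not decaying, I will run the argument at the level of the regularized corrector $\tilde{\phi}_{\lambda,i}$, for which the same computation gives $(\lambda+\nabla^*\tilde{a}\nabla)w_\lambda=\alpha_\lambda(\mathbf{1}_{\underline{e}}-\mathbf{1}_{\bar{e}})$ with $\alpha_\lambda$ defined analogously in terms of $\tilde{\phi}_{\lambda,i}^e$. For $\lambda>0$ the massive Green function $G_\lambda$ of $\lambda+\nabla^*\tilde{a}\nabla$ decays exponentially, and $w_\lambda$ is uniformly bounded in $L^2$ in the spatial variable, hence by uniqueness one obtains the explicit representation
\begin{equation*}
w_\lambda(x)=\alpha_\lambda\bigl(G_\lambda(x,\underline{e})-G_\lambda(x,\bar{e})\bigr)=-\alpha_\lambda\nabla G_\lambda(x,e),
\end{equation*}
and therefore $\nabla_j w_\lambda(x)=-\alpha_\lambda\,\nabla\nabla G_\lambda(e',e)$ with $e'=(x,x+e_j)$.

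Now I apply Hölder with conjugate exponents $2,2$: $\|\nabla_j w_\lambda(x)\|_p\le\|\alpha_\lambda\|_{2p}\,\|\nabla\nabla G_\lambda(e',e)\|_{2p}$. The factor $\eta(\zeta_e)-\eta(\zeta'_e)$ is bounded by the assumption on $\eta$, and $\nabla_k\tilde{\phi}_{\lambda,i}^e(\underline{e})$ is uniformly bounded in $L^q(\Omega)$ for every $q$ by Proposition~\ref{p:esCo}, so $\|\alpha_\lambda\|_{2p}\lesssim 1$ uniformly in $\lambda$. The second factor is controlled by the annealed estimate on the mixed second gradient of the Green function in Proposition~\ref{p:esGr}, giving $\|\nabla\nabla G_\lambda(e',e)\|_{2p}\lesssim|x-\underline{e}|_*^{-d}$ uniformly in $\lambda\ge 0$. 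Combining,
\begin{equation*}
\E\bigl\{|\nabla_j\tilde{\phi}_{\lambda,i}(x)-\nabla_j\tilde{\phi}_{\lambda,i}^e(x)|^p\bigr\}\lesssim|x-\underline{e}|_*^{-pd}.
\end{equation*}
Finally I pass to $\lambda\to 0$: by Proposition~\ref{p:esCo}, $\nabla\tilde{\phi}_{\lambda,i}\to\nabla\tilde{\phi}_i$ in $L^q(\Omega)$ for every $q$, and the same convergence holds for the perturbed environment, so $\nabla_j w_\lambda(x)\to\nabla_j w(x)$ in $L^p$ and the uniform bound persists (alternatively by Fatou). The only mild subtlety I anticipate is justifying the Green-function identity $w_\lambda=-\alpha_\lambda\nabla G_\lambda(\cdot,e)$, which uses the $L^2$-uniqueness afforded by the strictly positive mass $\lambda$; this is precisely why the regularization is needed, since the unregularized equation alone does not determine $w$ among sublinear solutions.
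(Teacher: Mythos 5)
Your proof is correct and takes essentially the same route as the paper: subtract the regularized corrector equations to get a single-edge source supported on $e$, represent the difference through the massive Green function $G_\lambda$, bound $\nabla_j\tilde\phi_{\lambda,i}(x)-\nabla_j\tilde\phi_{\lambda,i}^e(x)$ by $|\nabla\nabla G_\lambda(e',e)|\,|\nabla\tilde\phi_{\lambda,i}^e(\underline{e})+e_i|$, apply Cauchy--Schwarz together with Propositions~\ref{p:esCo} and \ref{p:esGr} uniformly in $\lambda$, and pass to the limit $\lambda\to0$. Your extra remarks on the $L^2$-uniqueness justifying the Green-function identity only make explicit what the paper leaves implicit.
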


\begin{proof}
By the convergence of $\nabla_j\tilde{\phi}_{\lambda,i}(x)\to \nabla_j\tilde{\phi}_i(x)$ in $L^p(\Omega)$, we only need to show 
\begin{equation*}
\E\{|\nabla_j\tilde{\phi}_{\lambda,i}(x)-\nabla_j\tilde{\phi}_{\lambda,i}^e(x)|^p\}\les |x-\underline{e}|_*^{-pd},
\end{equation*}
where the implicit multiplicative constant is independent of $\lambda$.

We write the equation satisfied by $\tilde{\phi}_{\lambda,i}$ and $\tilde{\phi}_{\lambda,i}^e$ as
\begin{eqnarray*}
\lambda \tilde{\phi}_{\lambda,i}(x)+\nabla^* \tilde{a}(x)\nabla \tilde{\phi}_{\lambda,i}(x)&=&-\nabla^*\tilde{a}(x)e_i,\\
\lambda \tilde{\phi}_{\lambda,i}^e(x)+\nabla^* \tilde{a}^e(x)\nabla \tilde{\phi}_{\lambda,i}^e(x)&=&-\nabla^*\tilde{a}^e(x)e_i.
\end{eqnarray*}
A straightforward calculation leads to
\begin{equation*}
\tilde{\phi}_{\lambda,i}(x)-\tilde{\phi}_{\lambda,i}^e(x)=\sum_{y\in\Z^d} G_\lambda(x,y)\nabla^*(\tilde{a}^e(y)-\tilde{a}(y))(\nabla \tilde{\phi}_{\lambda,i}^e(y)+e_i),
\end{equation*}
so we have
\begin{equation*}
\nabla_j\tilde{\phi}_{\lambda,i}(x)-\nabla_j\tilde{\phi}_{\lambda,i}^e(x)=\sum_{y\in\Z^d} \nabla_{x,j}\nabla_yG_\lambda(x,y)(\tilde{a}^e(y)-\tilde{a}(y))(\nabla \tilde{\phi}_{\lambda,i}^e(y)+e_i).
\end{equation*}
Since $\tilde{a}^e(y)-\tilde{a}(y)=0$ when $y\neq \underline{e}$, we conclude
\begin{equation*}
|\nabla_j\tilde{\phi}_{\lambda,i}(x)-\nabla_j\tilde{\phi}_{\lambda,i}^e(x)|\les |\nabla_{x,j}\nabla_yG_\lambda(x,\underline{e})||\nabla \tilde{\phi}_{\lambda,i}^e(\underline{e})+e_i|.
\end{equation*}
By Propositions~\ref{p:esCo} and \ref{p:esGr}, the proof is complete.
\end{proof}

\begin{lem}[Covariance estimate of $q_{ij}$]
For $i,j=1,\ldots,d$ and $x\in\Z^d$, we have
\begin{equation}
|\E\{\tilde{q}_{ij}(0)\tilde{q}_{ij}(x)\}|\les\frac{\log|x|_*}{|x|_*^d}.
\end{equation}
\label{l:covqij}
\end{lem}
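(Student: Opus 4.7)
The natural approach is the Helffer--Sj\"ostrand representation applied to the two variables $\tilde q_{ij}(0)$ and $\tilde q_{ij}(x)$. Since $\E\{\tilde q_{ij}\}=0$ by the definition \eqref{effcst} of $a_\h$, Proposition~\ref{p:HSre} gives
\[
\E\{\tilde q_{ij}(0)\tilde q_{ij}(x)\}=\sum_{e\in\B}\la \partial_e\tilde q_{ij}(0),\,(1+\L)^{-1}\partial_e\tilde q_{ij}(x)\ra,
\]
and combining Cauchy--Schwarz with the fact that $(1+\L)^{-1}$ is an $L^2$ contraction reduces the task to the pointwise sensitivity bound
\[
\|\partial_e\tilde q_{ij}(x)\|_2\les \frac{1}{|x-\underline{e}|_*^d}\qquad\text{uniformly in }i,j,x,e,
\]
followed by the standard convolution estimate $\sum_{e\in\B}|\underline e|_*^{-d}|x-\underline e|_*^{-d}\les \log|x|_*/|x|_*^d$ provided by Lemma~\ref{l:cvP}.

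The substance of the argument is thus the sensitivity bound on $\partial_e\tilde q_{ij}(x)$. Recalling $\tilde q_{ij}(x)=\tilde a_j(x)(1_{i=j}+\nabla_j\tilde\phi_i(x))-\bar a\,1_{i=j}$, the Leibniz rule gives
\[
\partial_e\tilde q_{ij}(x)=\partial_e\tilde a_j(x)\,(1_{i=j}+\nabla_j\tilde\phi_i(x))+\tilde a_j(x)\,\partial_e\nabla_j\tilde\phi_i(x).
\]
The first summand is supported on the single edge $e=(x,x+e_j)$ and is $O(1)$ in every $L^p$ there, which is consistent with the target bound since $|x-\underline e|_*\asymp 1$ in that case. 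For the second summand I would differentiate the regularized corrector equation \eqref{mascor} with respect to $\zeta_e$: if $e$ lies in direction $k$ with $\underline e=y_0$, then $\partial_e\tilde a$ is supported only at $y=y_0$ in the $k$-th diagonal entry, so inverting $\lambda+\nabla^*\tilde a\nabla$ and performing discrete integration by parts yields the pointwise formula
\[
\partial_e\nabla_j\tilde\phi_{\lambda,i}(x)=-\eta'(\zeta_e)\bigl(\nabla_k\tilde\phi_{\lambda,i}(y_0)+1_{i=k}\bigr)\,\nabla_{x,j}\nabla_{y,k}G_\lambda(x,y_0).
\]

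Applying H\"older with exponents $(4,4)$, the uniform $L^4$ bound on $\nabla\phi_{\lambda,i}$ from Proposition~\ref{p:esCo}, and the annealed estimate $\|\nabla\nabla G_\lambda\|_4\les |x-y_0|_*^{-d}$ from Proposition~\ref{p:esGr} then give $\|\partial_e\nabla_j\tilde\phi_{\lambda,i}(x)\|_2\les|x-\underline e|_*^{-d}$ uniformly in $\lambda>0$, and the $L^p$ convergence $\phi_{\lambda,i}\to\phi_i$ preserves this in the limit $\lambda\to 0$. Combining both summands yields the sensitivity bound, after which the convolution lemma closes the estimate. The main technical point, and what I expect to be the main obstacle, is the sensitivity computation for $\partial_e\nabla_j\tilde\phi_i$: one needs to work on the regularized level, verify the pointwise lattice formula above, and push the estimate through $\lambda\to 0$ without losing the optimal $|x-\underline e|_*^{-d}$ decay. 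Once that is in hand, everything else reduces to the H\"older-plus-convolution bookkeeping already used in the proofs of Lemmas~\ref{l:L4bd} and \ref{l:dge}.
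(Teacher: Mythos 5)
Your argument is correct, but it follows a different route from the paper. The paper does not use the Helffer--Sj\"ostrand representation here: it invokes the resampling-type covariance inequality from \cite[(4.4)]{gu-mourrat}, bounding $|\mathrm{Cov}\{\tilde q_{ij}(0),\tilde q_{ij}(x)\}|$ by $\sum_{e\in\B}\|\tilde q_{ij}(0)-\tilde q_{ij}^e(0)\|_2\,\|\tilde q_{ij}(x)-\tilde q_{ij}^e(x)\|_2$, and then reuses the already-established Lemma~\ref{l:dge} (sensitivity of $\nabla_j\tilde\phi_i$ under resampling the single variable $\zeta_e$) to get $\E\{|\tilde q_{ij}(x)-\tilde q_{ij}^e(x)|^2\}\les 1_{x=\underline e}+|x-\underline e|_*^{-2d}$, after which the same convolution estimate of Lemma~\ref{l:cvP} that you use closes the proof. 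Your version replaces the resampling differences by vertical derivatives $\partial_e$ via Proposition~\ref{p:HSre}, Cauchy--Schwarz and the $L^2$-contraction of $(1+\L)^{-1}$, which forces you to re-derive the sensitivity estimate in derivative form: your formula for $\partial_e\nabla_j\tilde\phi_{\lambda,i}(x)$ obtained by differentiating \eqref{mascor} is the exact analogue of the computation in Lemma~\ref{l:dge} (and of Lemma~\ref{l:vd}), and the H\"older/annealed-Green/$\lambda\to0$ steps go through as you indicate, with the minor caveat that the limit $\lambda\to0$ for \emph{weak} derivatives should be taken via weak $L^2$ compactness and the defining relation \eqref{eq:defwd} (lower semicontinuity of the norm), rather than by direct $L^p$ convergence as in the resampling case. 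What the paper's route buys is brevity --- it recycles Lemma~\ref{l:dge} verbatim at the cost of citing an external covariance inequality; what your route buys is self-containedness within the Helffer--Sj\"ostrand framework already set up in Section~\ref{s:igd}, and it produces bounds directly on $\partial_e\tilde q_{ij}$, which is the same quantity entering the definition of $\tilde K_{ijkl}$ in \eqref{excos}. Both yield the identical final bound $\sum_{e\in\B}|\underline e|_*^{-d}|x-\underline e|_*^{-d}\les \log|x|_*/|x|_*^d$.
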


\begin{rem}
Similar results in continuous setting are given in \cite[Proposition 4.7]{gu-mourrat}.
\end{rem}

\begin{proof}
By \cite[(4.4)]{gu-mourrat}, we have
\begin{equation*}
\begin{aligned}
|\E\{\tilde{q}_{ij}(0)\tilde{q}_{ij}(x)\}|=&|\mathrm{Cov}\{\tilde{q}_{ij}(0),\tilde{q}_{ij}(x)\}|\\
\les&\sum_{e\in\B}\sqrt{\E\{|\tilde{q}_{ij}(0)-\tilde{q}_{ij}^e(0)|^2\}}\sqrt{\E\{|\tilde{q}_{ij}(x)-\tilde{q}_{ij}^e(x)|^2\}}.
\end{aligned}
\end{equation*}
Recall that $q_{ij}=a_j1_{i=j}+a_jD_j\phi_i-\bar{a}1_{i=j}$, so for $e\in\B,x\in\Z^d$, we have
\begin{equation*}
|\tilde{q}_{ij}(x)-\tilde{q}_{ij}^e(x)|\les |\tilde{a}_j(x)-\tilde{a}_j^e(x)|(1+|\nabla_j\tilde{\phi}_i(x)|)+|\nabla_j\tilde{\phi}_i(x)-\nabla_j\tilde{\phi}_i^e(x)|.
\end{equation*}
By Proposition~\ref{p:esCo} and Lemma~\ref{l:dge}, we have
\begin{equation*}
\E\{|\tilde{q}_{ij}(x)-\tilde{q}_{ij}^e(x)|^2\}\les 1_{x=\underline{e}}+|x-\underline{e}|_*^{-2d},
\end{equation*}
which implies
\begin{equation*}
|\E\{\tilde{q}_{ij}(0)\tilde{q}_{ij}(x)\}|\les \sum_{e\in\B}\frac{1}{|\underline{e}|_*^d}\frac{1}{|x-\underline{e}|_*^d}\les \frac{\log|x|_*}{|x|_*^d},
\end{equation*}
where the last inequality comes from Lemma~\ref{l:cvP}. The proof is complete.
\end{proof}

\begin{lem}[Estimates on discrete convolutions]
For $\alpha,\beta>0$ with $\alpha+\beta>d$, we have
\begin{equation}
\sum_{y\in \Z^d}\frac{1}{|y|_*^\alpha}\frac{1}{|x-y|_*^\beta}\leq C_{\alpha,\beta} F_{\alpha,\beta}(x)
\end{equation}
for some constant $C_{\alpha,\beta}>0$ and
\begin{equation}
F_{\alpha,\beta}(x)=\frac{1}{|x|_*^{\alpha+\beta-d}}1_{\alpha\vee \beta<d}+\frac{1}{|x|_*^{\alpha\wedge\beta}}1_{\alpha\vee\beta>d}+\frac{\log|x|_*}{|x|_*^{\alpha\wedge\beta}}1_{\alpha\vee\beta=d}.
\end{equation}
\label{l:cvP}
\end{lem}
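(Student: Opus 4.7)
The approach is the standard three-region decomposition for convolutions of power-law weights. Set $R := |x|_*$ (so $R \ge 2$) and split
\[
\Z^d = A_I \sqcup A_{II} \sqcup A_{III},
\]
where $A_I = \{|y|_* \le R/3\}$, $A_{II} = \{|x-y|_* \le R/3\}$, and $A_{III}$ is the complement. On $A_I$ we have $|x-y|_* \asymp R$ (since $|x-y|_* \ge R - R/3$), on $A_{II}$ symmetrically $|y|_* \asymp R$, and on $A_{III}$ both quantities are $\gtrsim R$.

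On $A_I$ the sum factorizes to $\lesssim R^{-\beta} \sum_{|y|_* \le R/3} |y|_*^{-\alpha}$. A standard dyadic shell estimate gives
\[
\sum_{|y|_* \le R} |y|_*^{-\alpha} \lesssim R^{d-\alpha}\,\1_{\alpha<d} + \log R \,\1_{\alpha=d} + 1\cdot\1_{\alpha>d},
\]
so that $A_I$ contributes $\lesssim R^{d-\alpha-\beta}\1_{\alpha<d} + R^{-\beta}\log R\,\1_{\alpha=d} + R^{-\beta}\1_{\alpha>d}$. The region $A_{II}$ is treated symmetrically by substituting $z = x-y$, swapping the roles of $\alpha$ and $\beta$.

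For $A_{III}$, I partition into dyadic shells $S_k = \{y : 2^k R/3 \le |y|_* < 2^{k+1} R/3\}$ for $k \ge 0$. On $S_k$ one has $|y|_* \asymp 2^k R$ and $|x-y|_* \asymp 2^k R$ (the latter because for $k=0$ membership in $A_{III}$ forces $|x-y|_* > R/3$, while for $k \ge 1$ one has $|x-y|_* \ge |y|_* - R \gtrsim 2^k R$). Since $|S_k| \lesssim (2^k R)^d$, the contribution of $S_k$ is $\lesssim (2^k R)^{d-\alpha-\beta}$, and summing on $k \ge 0$ using $\alpha+\beta>d$ gives a total bound $\lesssim R^{d-\alpha-\beta}$ for $A_{III}$.

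Finally I combine the three bounds by a short case analysis on whether $\alpha \vee \beta$ is less than, equal to, or greater than $d$: when $\alpha\vee\beta<d$ all three regions give $R^{d-\alpha-\beta}$; when $\alpha\vee\beta>d$, say $\alpha>d\ge\beta$, the dominant contribution is $R^{-\beta} = R^{-\alpha\wedge\beta}$ from $A_I$ (and symmetric cases are analogous, including $\alpha,\beta>d$ where $A_{III}$ is subdominant); and when $\alpha\vee\beta=d$ the logarithm appears from either $A_I$ or $A_{II}$ and dominates. This matches exactly the formula for $F_{\alpha,\beta}$. There is no real obstacle here beyond careful bookkeeping; the only mild subtlety is verifying that on $A_{III}\cap S_0$ the two factors really are both of order $R$ so that the bound $\lesssim R^{d-\alpha-\beta}$ holds, which follows from the defining conditions $|y|_*, |x-y|_* > R/3$ together with $|y|_* < 2R/3$.
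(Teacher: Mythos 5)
Your proof is correct and follows essentially the same route as the paper: a three-region decomposition (near $0$, near $x$, and the far region) combined with the standard shell estimate $\sum_{|y|\le R}|y|_*^{-\gamma}\lesssim R^{d-\gamma}\1_{\gamma<d}+\log R\,\1_{\gamma=d}+\1_{\gamma>d}$ and the summation $\sum_{|y|\gtrsim R}|y|_*^{-\alpha-\beta}\lesssim R^{d-\alpha-\beta}$, followed by the same case analysis on $\alpha\vee\beta$. The only nitpick is that on the shell $S_1\cap A_{III}$ the bound $|x-y|_*\ge|y|_*-R$ alone is not $\gtrsim 2R$; as in your $k=0$ case one should invoke the defining constraint $|x-y|_*>R/3$, which fixes this with a uniform constant and leaves the argument intact.
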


\begin{rem}
We will usually replace $\frac{\log|x|_*}{|x|_*^{\alpha\wedge\beta}}$ by $\frac{1}{|x|_*^{(\alpha\wedge\beta)-}}$.
\end{rem}

\begin{proof}
The proof is standard. Since $\alpha+\beta>d$, we only need to consider the region $|x|>100$.

For fixed $x$, let $I_1=\{y:|y|\leq |x|/2\}$, $I_2=\{y: |y-x|\leq |x|/2\}$, and $I_3=\Z^d\setminus (I_1\cup I_2)$. We control the sum in each region separately. The proof for each case is similar and we only use the following two facts:
\begin{itemize}
\item $|x|_*\les |y-x|_*$ in $I_1$, $|x|_*\les |y|_*$ in $I_2$ and $|y-x|_*\sim |y|_*$ in $I_3$,
\item for any $\gamma>0$, $\sum_{|y|\leq |x|}|y|_*^{-\gamma}\les |x|_*^{d-\gamma}1_{\gamma<d}+\log|x|_*1_{\gamma=d}+1_{\gamma>d}$.
\end{itemize}

In $I_3$, we have 
\begin{equation*}
\sum_{I_3}\frac{1}{|y|_*^\alpha}\frac{1}{|x-y|_*^\beta}\les \sum_{|y|\geq |x|/2}\frac{1}{|y|_*^{\alpha+\beta}}\les  \frac{1}{|x|_*^{\alpha+\beta-d}}.
\end{equation*}

If $\alpha\vee\beta<d$, the discussion for $I_1$ and $I_2$ are the same. Take $I_1$ for example, we have 
\begin{equation*}
\sum_{I_1}\frac{1}{|y|_*^\alpha}\frac{1}{|x-y|_*^\beta}\les \frac{1}{|x|_*^\beta}\sum_{|y|\leq |x|/2}\frac{1}{|y|_*^\alpha}\les \frac{1}{|x|_*^{\alpha+\beta-d}}.
\end{equation*}

If $\alpha\vee\beta>d$, we assume $\alpha\geq \beta$, so $\alpha>d$. By similar discussion, in $I_2$ we get the estimate
\begin{equation*}
\sum_{I_2}\frac{1}{|y|_*^\alpha}\frac{1}{|x-y|_*^\beta}\les\frac{1}{|x|_*^{\alpha+\beta-d}}1_{\beta<d}+\frac{\log |x|_*}{|x|_*^\alpha}1_{\beta=d}+\frac{1}{|x|_*^\alpha}1_{\beta>d}.
\end{equation*}
 In $I_1$, we have $\sum_{|y|\leq |x|/2}|y|_*^{-\alpha}|x-y|_*^{-\beta}\les |x|_*^{-\beta}$. So an overall bound is given by $|x|_*^{-\beta}=|x|_*^{-\alpha\wedge \beta}$.

If $\alpha\vee \beta=d$, we assume $\alpha=d$. If $\beta=d$, in both $I_1,I_2$ we get the bound $|x|_*^{-d}\log|x|_*$. If $\beta<d$, in $I_1$ we get a bound $|x|_*^{-\beta}\log|x|_*$ and in $I_2$ we get the bound $|x|_*^{d-\alpha-\beta}=|x|_*^{-\beta}$. Therefore, an overall bound is $|x|_*^{-\beta}\log|x|_*=|x|_*^{-\alpha\wedge\beta}\log|x|_*$.

The proof is complete.
\end{proof}

\begin{lem}
Let $x_1,\ldots,x_k\in \Z^d$ be mutually different, and for each $i$, let $\bar{i}$ be such that $|x_{\bar{i}}-x_i|=\min_{j\neq i}|x_j-x_i|$. Assume $\alpha_1,\ldots,\alpha_k\in (0,d)$ and $\alpha_i+\alpha_j>d$ for $i\neq j$, then
\begin{equation}
\sum_{y\in \Z^d} \prod_i \frac{1}{|y-x_i|_*^{\alpha_i}}\les \sum_{i=1}^k\prod_{j\neq i,\bar{i}}\frac{1}{|x_j-x_i|_*^{\alpha_j}} \frac{1}{|x_i-x_{\bar{i}}|_*^{\alpha_i+\alpha_{\bar{i}}-d}}.
\end{equation}
\label{l:cvmP}
\end{lem}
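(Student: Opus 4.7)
The plan is to reduce the multi-point sum to an iterated two-point sum by decomposing $\Z^d$ into Voronoi-type cells around the points $x_1,\ldots,x_k$. For each $i$, set
\[
V_i := \{y \in \Z^d : |y - x_i| = \min_{1 \le j \le k} |y - x_j|\},
\]
with ties broken arbitrarily so that $(V_i)_{i=1}^k$ is a partition. Writing $\sum_{y \in \Z^d} = \sum_i \sum_{y \in V_i}$, it suffices to show that for each $i$,
\[
\sum_{y \in V_i} \prod_{j=1}^k \frac{1}{|y - x_j|_*^{\alpha_j}} \;\lesssim\; \prod_{j \neq i, \bar i} \frac{1}{|x_j - x_i|_*^{\alpha_j}} \cdot \frac{1}{|x_i - x_{\bar i}|_*^{\alpha_i + \alpha_{\bar i} - d}}.
\]

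The key geometric observation is that for $y \in V_i$ and $j \neq i$, the triangle inequality gives $|x_j - x_i| \le |x_j - y| + |y - x_i| \le 2|y - x_j|$, and since $|\cdot|_* = 2 + |\cdot|$, this upgrades to $|y - x_j|_* \gtrsim |x_j - x_i|_*$. Applying this lower bound to every index $j \notin \{i, \bar i\}$, we pull the corresponding factors outside the sum:
\[
\sum_{y \in V_i} \prod_{j} \frac{1}{|y-x_j|_*^{\alpha_j}} \;\lesssim\; \prod_{j \neq i, \bar i} \frac{1}{|x_j - x_i|_*^{\alpha_j}} \sum_{y \in \Z^d} \frac{1}{|y - x_i|_*^{\alpha_i}}\frac{1}{|y - x_{\bar i}|_*^{\alpha_{\bar i}}},
\]
where we have freely extended the inner sum from $V_i$ to all of $\Z^d$.

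The remaining two-point convolution is exactly the object estimated by Lemma~\ref{l:cvP}. Since $\alpha_i, \alpha_{\bar i} \in (0,d)$ and $\alpha_i + \alpha_{\bar i} > d$ by hypothesis, we fall squarely into the regime $\alpha \vee \beta < d$ of $F_{\alpha,\beta}$, yielding
\[
\sum_{y \in \Z^d} \frac{1}{|y - x_i|_*^{\alpha_i}}\frac{1}{|y - x_{\bar i}|_*^{\alpha_{\bar i}}} \;\lesssim\; \frac{1}{|x_i - x_{\bar i}|_*^{\alpha_i + \alpha_{\bar i} - d}}.
\]
Combining and summing over $i$ produces the desired bound.

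There is no real obstacle here; the only subtle point is the choice of which two factors to retain inside the inner sum. One must keep a pair since each individual $\alpha_j < d$ makes a single-factor sum divergent. Among the possible choices of pair that would make the sum converge (any $(i,j)$ works since $\alpha_i + \alpha_j > d$), retaining the factor associated with the nearest neighbour $\bar i$ is what produces the optimal bound: pulling $\frac{1}{|y-x_{\bar i}|_*^{\alpha_{\bar i}}}$ out would only yield $|x_{\bar i} - x_i|_*^{-\alpha_{\bar i}}$, the largest possible such factor, whereas keeping it inside trades it for the much smaller exponent $\alpha_i + \alpha_{\bar i} - d$.
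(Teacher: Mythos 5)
Your proof is correct and follows essentially the same route as the paper: decompose $\Z^d$ into the cells of points closest to each $x_i$, use $|y-x_j|_*\gtrsim |x_j-x_i|_*$ on that cell to extract the factors with $j\neq i,\bar i$, and bound the remaining two-point convolution by Lemma~\ref{l:cvP} in the regime $\alpha_i,\alpha_{\bar i}<d$, $\alpha_i+\alpha_{\bar i}>d$. Your closing remark about why the nearest neighbour $\bar i$ must be the factor kept inside the sum is a nice addition not spelled out in the paper.
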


\begin{proof}
For each $i$, we define the region $I_i=\{y: |y-x_i|\leq \min_j|y-x_j|\}$, i.e., the set of points that are closest to $x_i$. If $y\in I_i$, we have $|y-x_j|\geq |x_i-x_j|/2$ for any $j\neq i$. Therefore,
\begin{equation*}
\sum_{y\in I_i} \prod_j \frac{1}{|y-x_j|_*^{\alpha_j}}\les \prod_{j\neq i,\bar{i}}\frac{1}{|x_j-x_i|_*^{\alpha_j}}\sum_{y\in \Z^d} \frac{1}{|y-x_i|_*^{\alpha_i}}\frac{1}{|y-x_{\bar{i}}|_*^{\alpha_{\bar{i}}}}.
\end{equation*}
Since $\alpha_i+\alpha_{\bar{i}}>d$, the sum over $y$ can be bounded using Lemma~\ref{l:cvP}, e.g.,  when $\alpha_i<d$ for all $i$, we have
\begin{equation*}
\sum_{y\in I_i} \prod_j \frac{1}{|y-x_j|_*^{\alpha_j}}\les
\prod_{j\neq i,\bar{i}}\frac{1}{|x_j-x_i|_*^{\alpha_j}} \frac{1}{|x_i-x_{\bar{i}}|_*^{\alpha_i+\alpha_{\bar{i}}-d}}.
\end{equation*}
The proof is complete.
\end{proof}

\begin{rem}
From the proof of Lemma~\ref{l:cvmP}, we see that the condition $\alpha_i+\alpha_j>d$ for all $i\neq j$ is not necessary to obtain similar estimates. For example, for each $i$, as long as we can find $j\neq i$ such that $\alpha_i+\alpha_j>d$, the integral in $I_i$ can be controlled by a similar bound. 
\label{r:cvmP}
\end{rem}

Recall that the error function $\cE$ in Proposition~\ref{p:exCov} is given by
\begin{equation}
\begin{aligned}
\mathcal{E}(x_1,x_2,x_3,x_4)=\sum_{e\in \mathbb{B}}\sum_{i=1}^4\frac{\log |\underline{e}-x_i|_*}{|\underline{e}-x_i|_*^d}\prod_{j=1, j\neq i}^4
\frac{1}{|\underline{e}-x_j|_*^{d-1}}\\
\les \sum_{v\in \Z^d}\sum_{i=1}^4\frac{1}{|v-x_i|_*^{d-}}\prod_{j=1, j\neq i}^4
\frac{1}{|v-x_j|_*^{d-1}}.
\label{bdCor}
\end{aligned}
\end{equation}
By using Lemmas~\ref{l:cvP} and \ref{l:cvmP}, we have the following control on the error function:
\begin{lem}[Estimation of $\cE(x,y,z,w)$]
Let $x,y,z,w\in \Z^d$, 
\begin{itemize}
\item if $x=y=z=w$, $\cE(x,y,z,w)\les 1$.
\item if $x=y=z\neq w$, $\cE(x,y,z,w)\les |x-w|_*^{1-d}$.
\item if $x=y\neq z=w$, $\cE(x,y,z,w)\les |x-w|_*^{2-2d}$.
\item if $x=y$ and $y,z,w$ are mutually different, let $S=\{x,z,w\}$, 
\begin{equation*}
\cE(x,y,z,w)\les\sum_{v\in S}\prod_{u\in S\setminus\{v\}}\frac{1}{|u-v|_*^{d-1}}.
\end{equation*}
\item if $x,y,z,w$ are mutually different, let $S=\{x,y,z,w\}$,
\begin{equation*}
\cE(x,y,z,w)\les\sum_{v\in S} \prod_{u\in S\setminus \{v\}} \frac{1}{|u-v|_*^{(d-1)-}}.
\end{equation*}
\end{itemize}
\label{l:erCov}
\end{lem}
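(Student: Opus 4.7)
The plan is to start from the pointwise bound \eqref{bdCor}, which presents $\cE(x,y,z,w)$ as a sum over four terms, each of the form $\sum_{v\in\Z^d}\prod_{i=1}^4 |v-x_i|_*^{-\alpha_i}$ where one exponent is $d-$ and the other three are $d-1$, totaling $4d-3-$. In each case, I would merge the factors attached to identified points, observe that the resulting exponents satisfy the hypotheses of Lemma~\ref{l:cvP} and Lemma~\ref{l:cvmP} thanks to $d\ge 3$, and read off the claimed bound.

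More explicitly: for the case $x=y=z=w$, each summand collapses to $\sum_v |v-x|_*^{-(4d-3)-}$, which is finite since $4d-3>d$. For $x=y=z\neq w$, each summand becomes $\sum_v |v-x|_*^{-\alpha-}|v-w|_*^{-\beta}$ with $\alpha+\beta=4d-3$; in all four subcases the larger exponent exceeds $d$, so Lemma~\ref{l:cvP} in the regime $\alpha\vee\beta>d$ yields $|x-w|_*^{-(\alpha\wedge\beta)}$, and the worst (smallest) $\alpha\wedge\beta$ is $d-1$, as claimed. For $x=y\neq z=w$, the same reduction produces $\sum_v |v-x|_*^{-\alpha-}|v-z|_*^{-\beta}$ with $\{\alpha,\beta\}$ either $\{2d-1-,2d-2\}$ or $\{2d-2-,2d-1\}$; both exponents strictly exceed $d$ (using $d\ge 3$), so Lemma~\ref{l:cvP} gives $|x-z|_*^{-(\alpha\wedge\beta)}=|x-z|_*^{-(2d-2)}$.

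For the three-point case ($x=y$, with $z,w$ distinct from $x$ and from each other), I would set $S=\{x,z,w\}$ with exponents $(2d-1-,d-1,d-1)$ or $(2d-2,d-,d-1)$, etc., depending on which factor carried the $d-$. Each pair of exponents sums to at least $2d-1-$, which is $>d$ since $d\ge 3$, so Lemma~\ref{l:cvmP} (or the extension in Remark~\ref{r:cvmP}, since some exponents may exceed $d$) applies and delivers $\sum_{v\in S}\prod_{u\in S\setminus\{v\}}|u-v|_*^{-(d-1)}$, as claimed (the small loss is absorbed into the $-$). For the fully distinct case, with $S=\{x,y,z,w\}$ and exponents $(d-,d-1,d-1,d-1)$ (up to which coordinate receives the $d-$), all pairs of exponents sum to at least $2d-2-$ which exceeds $d$ for $d\ge 3$, so Lemma~\ref{l:cvmP} yields $\sum_{v\in S}\prod_{u\neq v}|u-v|_*^{-(d-1)-}$, matching the stated bound.

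The main obstacle is purely the bookkeeping: one must check that in every case (and for every choice of which index $i$ carries the $d-$ factor) the pairwise exponent conditions of Lemma~\ref{l:cvP} or Lemma~\ref{l:cvmP} are satisfied, and that the resulting bound collapses to the claim after relabeling. The condition $d\ge 3$ is essential in the two-point case with double identifications (Case~3) to guarantee $\alpha,\beta>d$, and in the multi-point cases to ensure every pair of exponents sums to more than $d$. The logarithms produced by the critical exponent regime of Lemma~\ref{l:cvP} are absorbed into the $-$ notation introduced in Section~\ref{s:stp}.
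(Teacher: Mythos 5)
Your overall strategy is exactly the paper's: start from \eqref{bdCor}, merge the factors at identified points, treat the one- and two-point cases with Lemma~\ref{l:cvP}, and handle the three- and four-point cases by the nearest-point decomposition underlying Lemma~\ref{l:cvmP}. The first three cases are fine (minor slip: in the case $x=y\neq z=w$ the merged exponent pair is always $\{2d-1-,2d-2\}$, never $\{2d-2-,2d-1\}$, which is what makes the clean exponent $2d-2$ come out; likewise in the three-point case the minimal pairwise sum is $2d-2$, not $2d-1-$, though it is still $>d$).

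There is, however, one concrete gap in the last two cases: you assert that Lemma~\ref{l:cvmP} ``delivers'' the claimed symmetric bounds, with ``the small loss absorbed into the $-$''. First, in the three-point case the stated bound has exponent exactly $d-1$, with no $-$, so there is no room to absorb anything. Second, the literal output of Lemma~\ref{l:cvmP} is not the claimed expression: it pairs each region's center $x_i$ with its nearest neighbour $x_{\bar i}$ and produces factors $|x_i-x_{\bar i}|_*^{-(\alpha_i+\alpha_{\bar i}-d)}$, which in the three-point case can be $|x_i-x_{\bar i}|_*^{-(d-1)-}$ (when the two light exponents $d-$ and $d-1$ get paired) and in the four-point case can be $|x_i-x_{\bar i}|_*^{-(d-2)}$ (when two exponents $d-1$ get paired); neither is termwise dominated by the claimed bound without a further argument. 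The fix is short but must be said: either pair the region-center factor with the factor of exponent $>d$ (the merged doubled point) or with the $d-$ factor, as the paper's proof does — then the two-point convolution falls in the $\alpha\vee\beta>d$ (resp.\ $\alpha\vee\beta<d$) regime of Lemma~\ref{l:cvP} and each region $I_u$ contributes exactly $\prod_{v\in S\setminus\{u\}}|u-v|_*^{-(d-1)}$ (resp.\ $^{-(d-1)-}$) — or keep the nearest-neighbour pairing and trade the deficit at $|x_i-x_{\bar i}|_*$ against the surplus decay of the heavy factor using $|x_i-x_{\bar i}|_*\le |x_i-x_j|_*$ and a suitable choice of the small exponents. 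As written, your plan proves the three-point bound only with $(d-1)-$ in place of $d-1$ (which would still suffice for Lemma~\ref{l:conVre}, but is weaker than the statement); with either of the above adjustments it yields the lemma as stated.
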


\begin{proof}
The proofs of different cases are similar. We only discuss the case when $x,y,z,w$ are mutually different. By \eqref{bdCor}, we consider a term of the form:
\begin{equation*}
\sum_{v\in \Z^d}\frac{1}{|v-x|_*^{d-}}\frac{1}{|v-y|_*^{d-1}}\frac{1}{|v-z|_*^{d-1}}\frac{1}{|v-w|_*^{d-1}}.
\end{equation*}
Recall that $S=\{x,y,z,w\}$. For $u\in S$, let $I_u=\{v\in \Z^d:|v-u|=\min_{q\in S}|v-q|\}$. The proof is the same as in Lemma~\ref{l:cvmP}, so we do not provide all details.

For $I_x$, we have
\begin{equation*}
\begin{aligned}
&\sum_{v\in I_x}\frac{1}{|v-x|_*^{d-}}\frac{1}{|v-y|_*^{d-1}}\frac{1}{|v-z|_*^{d-1}}\frac{1}{|v-w|_*^{d-1}}\\
\les &\frac{1}{|x-z|_*^{d-1}}\frac{1}{|x-w|_*^{d-1}}\sum_{v\in \Z^d}\frac{1}{|v-x|_*^{d-}}\frac{1}{|v-y|_*^{d-1}}\\
\les&\prod_{u\in S\setminus\{x\}}\frac{1}{|x-u|_*^{(d-1)-}}.
\end{aligned}
\end{equation*}
Note that we replaced $\frac{1}{|x-z|_*^{d-1}}\frac{1}{|x-w|_*^{d-1}}$ by $\frac{1}{|x-z|_*^{(d-1)-}}\frac{1}{|x-w|_*^{(d-1)-}}$.

For $I_y$, we have
\begin{equation*}
\begin{aligned}
&\sum_{v\in I_y}\frac{1}{|v-x|_*^{d-}}\frac{1}{|v-y|_*^{d-1}}\frac{1}{|v-z|_*^{d-1}}\frac{1}{|v-w|_*^{d-1}}\\
\les &\frac{1}{|y-z|_*^{d-1}}\frac{1}{|y-w|_*^{d-1}}\sum_{v\in \Z^d}\frac{1}{|v-x|_*^{d-}}\frac{1}{|v-y|_*^{d-1}}\\
\les&\prod_{u\in S\setminus\{y\}}\frac{1}{|y-u|_*^{(d-1)-}}.
\end{aligned}
\end{equation*}
The sums in $I_z,I_w$ are discussed in the same way. The proof is complete.
\end{proof}

\begin{lem}
For $x\in \Z^d$, $p>0$ and $\eps\in (0,1)$,
\begin{equation}
\sum_{y\in\Z^d}\frac{1}{|x-y|_*^{d-1}}1_{|y|\les \eps^{-1}} \les \frac{\eps^{-1}}{|\eps x|_*^{d-1}},
\end{equation}
and
\begin{equation}
\sum_{y\in\Z^d}\frac{1}{|x-y|_*^d}\frac{1}{|\eps y|_*^p}\les \frac{|\log\eps|}{|\eps x|_*^{d\wedge p}}.
\end{equation}
\label{l:MN}
\end{lem}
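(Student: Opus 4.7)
Both estimates are routine discrete convolution bounds, proved by splitting the sum around the natural length scale $1/\eps$. I will distinguish the near-field regime $|x| \leq 2/\eps$ (where $|\eps x|_* \sim 1$) from the far-field regime $|x| > 2/\eps$ (where $|\eps x|_* \sim \eps|x|$), and within each further decompose by the position of $y$.

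For the first inequality, in the near-field the right-hand side is of order $\eps^{-1}$, and I bound the sum using the elementary Riemann-sum estimate $\sum_{z \in \Z^d,\, |z| \leq R} |z|_*^{1-d} \les R$ applied with $R \les 1/\eps$. In the far-field, every $y$ with $|y| \les 1/\eps$ satisfies $|x-y|_* \gtrsim |x|$, so that the kernel factor is pulled out and multiplied by the volume bound $|\{y : |y| \les 1/\eps\}| \les \eps^{-d}$, yielding $\eps^{-d}/|x|^{d-1} \sim \eps^{-1}/|\eps x|_*^{d-1}$.

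For the second inequality, in the near-field I split $y$ at $|y| = 2/\eps$: the inner piece has $|\eps y|_* \sim 1$ and contributes $\sum_{|y| \leq 2/\eps} |x-y|_*^{-d} \les |\log\eps|$ via the log-integral bound $\sum_{|z| \leq R} |z|_*^{-d} \les \log R$; the outer piece has $|x-y| \sim |y|$ and $|\eps y|_* \sim \eps|y|$, giving a uniformly convergent tail of order $1$. In the far-field I partition $\Z^d$ into $S_1 = \{|y| \leq |x|/2\}$, $S_2 = \{|x-y| \leq |x|/2\}$ and the complement (dominated by these). On $S_2$, $|\eps y|_* \sim \eps|x|$ factors out, leaving $\sum_{|z| \leq |x|/2} |z|_*^{-d} \les \log|x|$. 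On $S_1$, $|x-y|_* \gtrsim |x|$ factors out, and $\sum_{|y| \leq |x|/2} |\eps y|_*^{-p}$ is controlled by an additional split at $|y| = 1/\eps$, producing a bound of order $\eps^{-d}$ when $p > d$.

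The main obstacle is reconciling the $\log|x|$ arising from $S_2$ with the target $|\log\eps|$. I would use $\log|x| \leq |\log\eps| + \log(\eps|x|)$ together with the elementary observation that $\log(t)/t^{p-d}$ is uniformly bounded on $t \geq 1$ whenever $p > d$, allowing the $\log(\eps|x|)$ piece to be absorbed into $(\eps|x|)^{-(p-d)}$. This is precisely the regime of the main application in Lemma~\ref{l:k0k3}, where $p = 4d-4 > d$, and it yields the claimed bound $|\log\eps|/|\eps x|_*^d$.
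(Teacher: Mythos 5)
The paper offers no argument for Lemma~\ref{l:MN} at all --- it simply refers to Lemmas 2.2 and 2.3 of \cite{MN} --- so your self-contained proof is by construction a different route. The route itself is the standard one and it works: splitting at the scale $\eps^{-1}$ and, in the far field, decomposing into $\{|y|\le |x|/2\}$, $\{|x-y|\le |x|/2\}$ and the complement gives the first estimate in full, and gives the second estimate whenever $p>d$ (the only case the paper uses: $p=4d-4$ in Lemma~\ref{l:k0k3} and in the tightness argument). Two remarks. First, a small fixable imprecision in the near field of the second estimate: on the outer region $|y|>2/\eps$ you assert $|x-y|\sim |y|$, which fails pointwise when $|x|$ and $|y|$ are both of order $2/\eps$ and $y$ is close to $x$. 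Split that region once more: where $|x-y|\ge |y|/2$ your order-one tail bound is correct, and where $|x-y|<|y|/2$ one has $|y|\les \eps^{-1}$ and $|\eps y|_*\gtrsim 1$, so the piece is $\les \sum_{|z|\les \eps^{-1}}|z|_*^{-d}\les |\log\eps|$, still within the near-field budget. (Throughout, as in the lemma itself, one should read $|\log\eps|$ as $1+|\log\eps|$ or restrict to $\eps\le 1/2$, since otherwise even the statement degenerates as $\eps\to 1$.)

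Second, your absorption of $\log(\eps|x|)$ into $(\eps|x|)^{-(p-d)}$ genuinely requires $p>d$, so your argument does not cover $0<p\le d$, which the statement of Lemma~\ref{l:MN} nominally includes. This is not a defect of your proof but of the statement: for $p\le d$ the inequality as written is false. Indeed, fix $\eps$ and let $|x|\to\infty$; the region $\{|x-y|\le |x|/2\}$ alone contributes $\gtrsim \log|x|\,|\eps x|_*^{-p}$ (each dyadic annulus around $x$ contributes a constant times $|\eps x|_*^{-p}$, and there are about $\log|x|$ of them), which cannot be dominated by $|\log\eps|\,|\eps x|_*^{-d\wedge p}$ uniformly in $x$. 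So the lemma should be understood in the regime $p>d$ in which it is applied, and in that regime your proof, with the one-line repair above, is correct.
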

We refer to \cite[Lemmas 2.2 and 2.3]{MN} for a proof.

\bigskip

\noindent \textbf{Acknowledgements.} We thank Scott Armstrong with whom the ideas of Section~1 were developped, and who accepted to let us present them here. We thank Antoine Gloria and Wenjia Jing for helpful discussions.

\def\cprime{$'$}

\end{document}